\begin{document}
%
%
\theoremstyle{plain}
\swapnumbers
    \newtheorem{thm}{Theorem}[section]
    \newtheorem{prop}[thm]{Proposition}
    \newtheorem{lemma}[thm]{Lemma}
    \newtheorem{cor}[thm]{Corollary}
    \newtheorem{subsec}[thm]{}
    \newtheorem*{thma}{Theorem A}
    \newtheorem*{thmb}{Theorem B}
    \newtheorem*{propc}{Proposition C}
\theoremstyle{definition}
    \newtheorem{defn}[thm]{Definition}
    \newtheorem{example}[thm]{Example}
    \newtheorem{examples}[thm]{Examples}
    \newtheorem{notn}[thm]{Notation}
\theoremstyle{remark}
    \newtheorem{remark}[thm]{Remark}
    \newtheorem{ack}[thm]{Acknowledgements}
%
%
\newenvironment{myeq}[1][]
{\stepcounter{thm}\begin{equation}\tag{\thethm}{#1}}
{\end{equation}}
\newcommand{\mydiag}[2][]{\myeq[#1]\xymatrix{#2}}
\newcommand{\mydiagram}[2][]
{\stepcounter{thm}\begin{equation}
     \tag{\thethm}{#1}\vcenter{\xymatrix{#2}}\end{equation}}
%
\newenvironment{mysubsection}[2][]
{\begin{subsec}\begin{upshape}\begin{bfseries}{#2.}
\end{bfseries}{#1}}
{\end{upshape}\end{subsec}}
\newenvironment{mysubsect}[2][]
{\begin{subsec}\begin{upshape}\begin{bfseries}{#2\vsn.}
\end{bfseries}{#1}}
{\end{upshape}\end{subsec}}
\newcommand{\sect}{\setcounter{thm}{0}\section}
\newcommand{\wh}{\ -- \ }
\newcommand{\wwh}{-- \ }
\newcommand{\w}[2][ ]{\ \ensuremath{#2}{#1}\ }
\newcommand{\ww}[1]{\ \ensuremath{#1}}
\newcommand{\wwb}[1]{\ \ensuremath{(#1)}-}
\newcommand{\wb}[2][ ]{\ (\ensuremath{#2}){#1}\ }
\newcommand{\wref}[2][ ]{\ \eqref{#2}{#1}\ }
\newcommand{\wwref}[2][ ]{\ \eqref{#2}{#1}}
\newcommand{\wbref}[2][ ]{\eqref{#2}{#1}}
%
%
\newcommand{\xra}[1]{\xrightarrow{#1}}
\newcommand{\xla}[1]{\xleftarrow{#1}}
\newcommand{\xsim}{\xrightarrow{\sim}}
\newcommand{\hra}{\hookrightarrow}
\newcommand{\epic}{\to\hspace{-5 mm}\to}
\newcommand{\adj}[2]{\substack{{#1}\\ \rightleftharpoons \\ {#2}}}
\newcommand{\ccsub}[1]{\circ_{#1}}
\newcommand{\DEF}{:=}
\newcommand{\EQUIV}{\Leftrightarrow}
\newcommand{\hsp}{\hspace{10 mm}}
\newcommand{\hs}{\hspace{5 mm}}
\newcommand{\hsm}{\hspace{3 mm}}
\newcommand{\vsm}{\vspace{3 mm}}
\newcommand{\vsn}{\vspace{1 mm}}
\newcommand{\vs}{\vspace{4 mm}}
\newcommand{\rest}[1]{\lvert_{#1}}
\newcommand{\lra}[1]{\langle{#1}\rangle}
\newcommand{\lin}[1]{\{{#1}\}}
\newcommand{\llrr}[1]{\langle\!\langle{#1}\rangle\!\rangle}
\newcommand{\llrrL}[1]{\llrr{#1}_{\Lambda}}
\newcommand{\q}[1]{^{({#1})}}
\newcommand{\li}[1]{_{({#1})}}
\newcommand{\var}{\varepsilon}
\newcommand{\hotimes}{\hat{\otimes}}
\newcommand{\vrp}{\varphi}
%
%
\newcommand{\A}{{\EuScript A}}
\newcommand{\C}{{\mathcal C}}
\newcommand{\D}{{\mathcal D}}
\newcommand{\E}{{\EuScript E}}
\newcommand{\F}{{\EuScript F}}
\newcommand{\G}{{\mathcal G}}
\newcommand{\K}{{\mathcal K}}
\newcommand{\LL}{{\mathcal L}}
\newcommand{\LQ}{\LL_{\bQ}}
\newcommand{\M}{{\mathcal M}}
\newcommand{\MA}{\M_{\A}}
\newcommand{\OA}{\OO^{\A}}
\newcommand{\OAp}{\OA_{+}}
\newcommand{\OAh}{\widehat{\OO}^{\A}}
\newcommand{\N}{{\EuScript N}}
\newcommand{\OO}{{\EuScript O}}
\newcommand{\Op}{\OO_{+}}
\newcommand{\Oh}{\widehat{\OO}}
\newcommand{\MO}{\M_{\OO}}
\newcommand{\MOp}{\M_{\Op}}
\newcommand{\PP}{{\mathcal P}}
\newcommand{\Pe}[1]{{\EuScript Pe}\sp{#1}}
\newcommand{\PO}{{\mathcal PO}}
\newcommand{\QQ}{{\mathcal Q}}
\newcommand{\QQp}{{\mathcal Q}_{+}}
\newcommand{\Ss}{{\mathcal S}}
\newcommand{\Sa}{\Ss_{\ast}}
\newcommand{\Sr}{\Ss_{\ast}^{\red}}
\newcommand{\TT}{{\mathcal T}}
\newcommand{\Ta}{\TT_{\ast}}
\newcommand{\U}{{\mathcal U}}
\newcommand{\V}{{\mathcal V}}
\newcommand{\eV}{{\EuScript V}}
\newcommand{\eVn}[1]{\eV\lra{#1}}
\newcommand{\W}{{\mathcal W}}
\newcommand{\X}{{\mathcal X}}
\newcommand{\eX}{{\EuScript X}}
\newcommand{\Y}{{\mathcal Y}}
\newcommand{\eY}{{\EuScript Y}}
\newcommand{\Z}{{\mathcal Z}}
\newcommand{\eZ}{{\EuScript Z}}
%
%
\newcommand{\hy}[2]{{#1}\text{-}{#2}}
%
%
\newcommand{\Alg}[1]{{#1}\text{-}{\EuScript Alg}}
\newcommand{\Ab}{{\EuScript Ab}}
\newcommand{\Abgp}{{\Ab\Grp}}
\newcommand{\AbL}{\Ab_{\Lambda}}
\newcommand{\Cat}{{\EuScript Cat}}
\newcommand{\Ch}{{\EuScript Ch}}
\newcommand{\DiGa}{{\EuScript D}i{\EuScript G}\sb{\ast}}
\newcommand{\Grp}{{\EuScript Gp}}
\newcommand{\Gpd}{{\EuScript Gpd}}
\newcommand{\RM}[1]{\hy{{#1}}{\EuScript Mod}}
\newcommand{\RL}{\RM{\Lambda}}
\newcommand{\Set}{{\EuScript Set}}
\newcommand{\Sets}{\Set_{\ast}}
\newcommand{\SC}{\hy{\Ss}{\Cat}}
\newcommand{\VC}{\hy{\V}{\Cat}}
%
%
\newcommand{\Tal}[1][ ]{$\Theta$-algebra{#1}}
\newcommand{\TAlg}{\Alg{\Theta}}
\newcommand{\Pa}[1][ ]{$\Pi$-algebra{#1}}
\newcommand{\PAa}[1][ ]{$\PiA$-algebra{#1}}
\newcommand{\PAMa}[1][ ]{$\PiAM$-algebra{#1}}
\newcommand{\PAMas}[1][ ]{$\PiAM$-algebras{#1}}
\newcommand{\PiA}{\Pi\sb{\A}}
\newcommand{\PiAM}{\Pi\sb{\A}\sp{\M}}
\newcommand{\PAlg}{\Alg{\Pi}}
\newcommand{\PAAlg}{\Alg{\PiA}}
\newcommand{\PAMAlg}{\Alg{\PiAM}}
%
%
\newcommand{\COC}{\hy{\CO}{\Cat}}
\newcommand{\GO}{(\Gpd,\OO)}
\newcommand{\GOC}{\hy{\GO}{\Cat}}
\newcommand{\GOp}{(\Gpd,\Op)}
\newcommand{\GOpC}{\hy{\GOp}{\Cat}}
\newcommand{\OC}{\hy{\OO}{\Cat}}
\newcommand{\SO}{(\Ss,\OO)}
\newcommand{\SaO}{(\Sa,\OO)}
\newcommand{\SOC}{\hy{\SO}{\Cat}}
\newcommand{\SaOC}{\hy{\SaO}{\Cat}}
\newcommand{\SOp}{(\Ss,\Op)}
\newcommand{\SOpC}{\hy{\SOp}{\Cat}}
\newcommand{\SaOp}{(\Sa,\Op)}
\newcommand{\SaOpC}{\hy{\SaOp}{\Cat}}
\newcommand{\HSO}[1]{H^{#1}\sb{\operatorname{SO}}}
\newcommand{\VO}{(\V,\OO)}
\newcommand{\VOC}{\hy{\VO}{\Cat}}
%
%
\newcommand{\co}[1]{c({#1})}
\newcommand{\Ad}{A_{\bullet}}
\newcommand{\Bd}{B_{\bullet}}
\newcommand{\tBL}{B\Lambda}
\newcommand{\Ed}{\E_{\bullet}}
\newcommand{\Fd}{F_{\bullet}}
\newcommand{\Gd}{G_{\bullet}}
\newcommand{\Kd}{\K_{\bullet}}
\newcommand{\Vd}{V_{\bullet}}
\newcommand{\whV}{\widehat{V}}
\newcommand{\hVd}[1]{\whV\sp{\lra{#1}}\sb{\bullet}}
\newcommand{\cVd}[1]{\breve{V}^{\lra{#1}}\sb{\bullet}}
\newcommand{\tVd}[1]{\tilde{V}^{\lra{#1}}\sb{\bullet}}
\newcommand{\qV}[2]{V^{\lra{#1}}_{#2}}
\newcommand{\qVd}[1]{\qV{#1}{\bullet}}
\newcommand{\qW}[2]{W^{({#1})}_{#2}}
\newcommand{\qWd}[1]{\qW{#1}{\bullet}}
\newcommand{\Wd}{W_{\bullet}}
\newcommand{\tWd}{\tilde{W}_{\bullet}}
\newcommand{\whW}{\widehat{W}}
\newcommand{\whWd}{\whW\sb{\bullet}}
\newcommand{\Xd}{X_{\bullet}}
\newcommand{\Yd}{Y_{\bullet}}
\newcommand{\sps}{semi-Postnikov section}
\newcommand{\qps}{quasi-Postnikov section}
%
%
\newcommand{\bF}{\mathbb F}
\newcommand{\Fp}{\bF_{p}}
\newcommand{\bN}{{\mathbb N}}
\newcommand{\bQ}{{\mathbb Q}}
\newcommand{\bR}{{\mathbb R}}
\newcommand{\bZ}{{\mathbb Z}}
%
%
\newcommand{\Fs}{F_{s}}
\newcommand{\Gp}{\Gamma_{+}}
\newcommand{\GpG}{\Gp,\Gamma}
\newcommand{\cGp}{\check{\Gamma}_{+}}
\newcommand{\baK}{\overline{\K}}
\newcommand{\bM}{\overline{M}}
\newcommand{\bX}{\bar{X}}
\newcommand{\bY}[1]{\overline{Y}\sb{#1}}
\newcommand{\hX}[1]{\widehat{X}\sp{#1}}
\newcommand{\cX}{\overline{X}}
\newcommand{\tX}{\widetilde{X}}
%
%
\newcommand{\pis}{\pi_{\ast}}
\newcommand{\piul}[2]{\pi\sp{#1}\sb{#2}}
\newcommand{\piAn}[1]{\piul{\A}{#1}}
\newcommand{\piA}{\piAn{\ast}}
\newcommand{\hpi}{\piul{\A}{0}}
\newcommand{\pinat}[1]{\operatorname{\pi^{\natural}_{#1}}}
\newcommand{\Po}[1]{\mathbf{P}^{#1}}
\newcommand{\bE}[2]{\mathbf{E}({#1},{#2})}
\newcommand{\tE}[2]{E({#1},{#2})}
\newcommand{\tEL}[2]{E\sb{\Lambda}({#1},{#2})}
\newcommand{\tPo}[1]{\Po{#1}}
%
%
\newcommand{\ab}{\sb{\operatorname{ab}}}
\newcommand{\arr}{\sb{\operatorname{arr}}}
\newcommand{\Arr}{\operatorname{Arr}}
\newcommand{\HAQ}[1]{H^{#1}}
\newcommand{\HL}[1]{H^{#1}\sb{\Lambda}}
\newcommand{\csk}[1]{\operatorname{csk}_{#1}}
\newcommand{\Coef}{\operatorname{Coef}}
\newcommand{\Cok}{\operatorname{Cok}}
\newcommand{\colim}{\operatorname{colim}}
\newcommand{\hocolim}{\operatorname{hocolim}}
\newcommand{\cone}[1]{\operatorname{Co}(#1)}
\newcommand{\sk}[1]{\operatorname{sk}_{#1}}
\newcommand{\cskc}[1]{\operatorname{cosk}^{c}_{#1}}
\newcommand{\fin}{\operatorname{fin}}
\newcommand{\hc}[1]{\operatorname{hc}_{#1}}
\newcommand{\ho}{\operatorname{ho}}
\newcommand{\holim}{\operatorname{holim}}
\newcommand{\Hom}{\operatorname{Hom}}
\newcommand{\uHom}{\underline{\Hom}}
\newcommand{\Id}{\operatorname{Id}}
\newcommand{\Image}{\operatorname{Im}}
\newcommand{\inc}{\operatornarme{inc}}
\newcommand{\init}{\operatorname{init}}
\newcommand{\Ker}{\operatorname{Ker}}
\newcommand{\vf}{v_{\fin}}
\newcommand{\vfi}[1]{v{#1}_{\fin}}
\newcommand{\vi}{v_{\init}}
\newcommand{\vin}[1]{v{#1}_{\init}}
\newcommand{\bstar}{\mbox{\large $\star$}}
\newcommand{\Mor}{\operatorname{Mor}}
\newcommand{\Obj}{\operatorname{Obj}\,}
\newcommand{\op}{\sp{\operatorname{op}}}
\newcommand{\pt}{\operatorname{pt}}
\newcommand{\red}{\operatorname{red}}
\newcommand{\wPh}[1]{\widetilde{\Phi}\sb{#1}}
%
%
\newcommand{\map}{\operatorname{map}}
\newcommand{\mapp}{\map\,}
\newcommand{\mapa}{\map_{\ast}}
%
%
\newcommand{\tg}[1]{\widetilde{\gamma}_{#1}}
\newcommand{\bdz}[1]{\bar{d}^{#1}_{0}}
\newcommand{\bbd}{\mathbf{d}}
\newcommand{\bd}[1]{\bbd^{#1}_{0}}
\newcommand{\tbd}[1]{\widetilde{\bbd}^{#1}_{0}}
\newcommand{\odel}{\overline{\delta}}
\newcommand{\bDelta}{\mathbf{\Delta}}
\newcommand{\hD}[1]{\hat{\Delta}^{#1}}
\newcommand{\tD}[1]{\tilde{\Delta}^{#1}}
\newcommand{\tDl}[2]{\tilde{\Delta}^{#1}_{#2}}
\newcommand{\tDp}[1]{\tD{#1}_{+}}
\newcommand{\hDp}[1]{\hD{#1}_{+}}
\newcommand{\bG}[1]{\overline{G}\sb{#1}}
\newcommand{\bS}[1]{\mathbf{S}^{#1}}
\newcommand{\bSp}[2]{\bS{#1}_{({#2})}}
\newcommand{\bV}[1]{\overline{V}_{#1}}
\newcommand{\bW}{\overline{W}}
\newcommand{\bmo}{\mathbf{-1}}
\newcommand{\bze}{\mathbf{0}}
\newcommand{\bo}{\mathbf{1}}
\newcommand{\bt}{\mathbf{2}}
\newcommand{\bth}{\mathbf{3}}
\newcommand{\bfo}{\mathbf{4}}
\newcommand{\bj}{\mathbf{j}}
\newcommand{\bjp}{\mathbf{j+1}}
\newcommand{\bk}{\mathbf{k}}
\newcommand{\bjkp}{\mathbf{j+k+1}}
\newcommand{\bkm}{\mathbf{k-1}}
\newcommand{\bm}{\mathbf{m}}
\newcommand{\bn}{\mathbf{n}}
\newcommand{\bnmk}{\mathbf{n-k}}
\newcommand{\bnmko}{\mathbf{n-k-1}}
\newcommand{\bnmkpo}{\mathbf{n-k+1}}
\newcommand{\bnm}{\mathbf{n-1}}
\newcommand{\bnmt}{\mathbf{n-2}}
\newcommand{\bnp}{\mathbf{n+1}}
\newcommand{\bnpp}{\mathbf{n+2}}
%
%
\newcommand{\fG}{\mathfrak{G}}
\newcommand{\cF}[1]{{\mathcal K}\sb{#1}}
\newcommand{\parz}{\partial_{0}}
\newcommand{\cfbase}[1]{\parz\cF{#1}}
\newcommand{\cftop}[1]{\widetilde{\partial}\cF{#1}}
\newcommand{\mC}[1]{\mathbf{C}\sb{#1}}
\newcommand{\baC}[1]{\overline{C}\sb{#1}}
\newcommand{\bbE}{\overline{E}'\sb{\ast}}
\newcommand{\buD}[1]{\overline{D}\sp{#1}}
\newcommand{\baD}[1]{\overline{D}\sb{#1}}
\newcommand{\baE}[1]{\overline{E}\sb{#1}}
\newcommand{\baP}[1]{\overline{P}\sb{#1}}
\newcommand{\mZ}[1]{\mathbf{Z}\sb{#1}}
\newcommand{\mZu}[1]{\mathbf{Z}\sp{#1}}
\newcommand{\latch}[1]{\mathbf{L}_{#1}}
\newcommand{\match}[1]{\mathbf{M}_{#1}}
\newcommand{\norm}[1]{\mathbf{N}_{#1}}
%
%
\title{Higher homotopy operations and Andr\'{e}-Quillen cohomology}
%
%
\author[D.~Blanc]{David Blanc}
\address{Department of Mathematics\\ University of Haifa\\ 31905 Haifa\\ Israel}
\email{blanc@math.haifa.ac.il}
\author [M.W.~Johnson]{Mark W.~Johnson}
\address{Department of Mathematics\\ Penn State Altoona\\
                  Altoona, PA 16601\\ USA}
\email{mwj3@psu.edu}
\author[J.M.~Turner]{James M.~Turner}
\address{Department of Mathematics\\ Calvin College\\
         Grand Rapids, MI 49546\\ USA}
\email{jturner@calvin.edu}
\date{\today}
\subjclass{Primary: 55Q35; \ secondary: 18G55, 55N99}
\keywords{Higher homotopy operations, Andr\'{e}-Quillen cohomology, 
homotopy-commutative diagram, $k$-invariants, obstruction theory}

\begin{abstract}
There are two main approaches to the problem of realizing a \Pa (a
graded group $\Lambda$ equipped with an action of the primary homotopy
operations) as the homotopy groups of a space $X$. Both involve trying
to realize an algebraic free simplicial resolution \w{\Gd} of
$\Lambda$ by a simplicial space \w[,]{\Wd} and proceed by induction on
the simplicial dimension.  
The first provides a sequence of Andr\'{e}-Quillen cohomology classes
in \w{\HAQ{n+2}(\Lambda;\Omega^{n}\Lambda)} \wb{n\geq 1} as
obstructions to the existence of successive Postnikov sections for
\w{\Wd} (cf.\ \cite{DKStB}). The second gives a sequence of
geometrically defined higher homotopy operations as the obstructions
(cf.\ \cite{BlaHH}); these were identified in \cite{BJTurH} with the
obstruction theory of \cite{DKSmH}. There are also (algebraic and
geometric) obstructions for distinguishing between different
realizations of $\Lambda$. 

In this paper we
\begin{enumerate}
\renewcommand{\labelenumi}{(\alph{enumi})~}
\item provide an explicit construction of the cocycles
representing the cohomology obstructions;
\item provide a similar explicit construction of certain minimal
  values of the higher homotopy operations (which reduce to
  ``long Toda brackets''), and
\item show that these two constructions correspond under an evident map.
\end{enumerate}
\end{abstract}
\maketitle

\setcounter{section}{0}

%
%
\section*{Introduction}
\label{cint}

Secondary and higher order operations are often used in homotopy theory
either as obstructions to resolving existence problems, or as 
computational tools.  In the 1950's, Adams
used secondary cohomology operations in \cite{AdamsN} to show the
non-existence of elements of Hopf invariant one in the stable homotopy
groups of spheres; at the same time, Toda employed his secondary
compositions (Toda brackets) to calculate some of these homotopy
groups in \cite{TodaC}. Higher homotopy and cohomology operations have
since been applied in many areas, including $H$-spaces, rational
homotopy, and stable homotopy theory (cf.\ \cite{BauA,HarpS,SpaS,TanrH}). 

To make sense of the general notion of a higher order operation, note
that many of the homotopy invariants of algebraic topology, such as
homotopy or (co)homology groups, carry a further \emph{primary}
structure, definable in the homotopy category itself. For example, the
homotopy groups of a pointed space $X$ have Whitehead products
and composition operations, which together make \w{\pis X} into a
\emph{\Pa} (see \S \ref{dssmc} below). Similarly, the mod $p$ cohomology
of $X$ has the structure of an unstable algebra over the Steenrod
algebra (cf.\ \cite[\S 1.4]{SchwU}), and the stable homotopy groups of
a commutative ring spectra form a graded commutative ring. 

The appropriate higher order operations (such as Massey products
or Toda brackets) form a higher structure superimposed on the
primary one: they are usually defined only when certain lower-order
operations vanish. General higher homotopy operations were 
defined in \cite{BMarkH,BChachP} to be certain obstructions to
rectifying homotopy-commutative diagrams \w[.]{\tX:\Gamma\to\ho\M}
Here $\M$ is a pointed simplicial model category and $\Gamma$ is a
finite directed indexing category called a \textit{lattice} (cf. \S
\ref{dlat}). In \cite{BJTurH}, we show how this obstruction
theory may be identified with that of Dwyer, Kan, and Smith 
(cf.\ \cite{DKSmH}). 

\begin{mysubsection}{Realization problems}\label{srealiz}
One natural question which arises in this context is whether a given
abstract (primary) algebraic structure \wh such as a \Pa[,] an
unstable algebra, or a graded ring \wh is in fact associated
to some topological space  or spectrum, and in how many ways. Such
realization questions have a long history in algebraic topology (see
\cite{HopfT,KuhnTR,SteCA} for the case of cohomology).  

Here we consider the problem of realizing an abstract \Pa $\Lambda$ as
the homotopy groups of a space $X$. To do so, we start with an
(algebraic) free simplicial resolution \w{\Gd} of $\Lambda$. This can
always be realized by a ``lax'' simplicial space \w[,]{\whWd} with each
\w{\whW\sb{n}} homotopy equivalent to a wedge of spheres, where the
simplicial identities hold only up to homotopy. If \w{\whWd} can be
rectified to a strict simplicial space \w[,]{\Wd} then its geometric
realization \w{X:=\|\Wd\|} has \w[,]{\pis X\cong\Lambda} as required.

There are two known approaches to solving this rectification problem
(and thus the original realization problem):
\begin{enumerate}
\renewcommand{\labelenumi}{\Roman{enumi}.~}
\item The ``geometric'' approach proceeds by induction over the
  skeleta of \w[,]{\whWd} yielding obstructions to the successive
  rectification problems in the form of higher homotopy operations 
  (see \cite{BlaHH,BlaAI}). 
\item The ``algebraic'' approach of Dwyer, Kan, and Stover 
constructs inductively two sequences of Andr\'{e}-Quillen cohomology
obstructions: one sequence for the realization of $\Lambda$, and the 
other for two such realizations $X$ and $Y$ to be homotopy equivalent.
\end{enumerate}

More explicitly, the second approach uses successive Postnikov
approximations \w{\qWd{n}} to the putative simplicial space \w{\Wd} to
define two Andr\'{e}-Quillen cohomology classes:

\begin{enumerate}
\renewcommand{\labelenumi}{(\alph{enumi})~}
\item An \emph{existence} obstruction
\w[,]{\beta_{n}\in\HAQ{n+2}(\Lambda;\Omega^{n}\Lambda)} which vanishes
if and only if \w{\qWd{n}} extends to an \wwb{n+1}Postnikov section
\w[.]{\qWd{n+1}}
\item A \emph{difference} obstruction
\w[,]{\delta_{n}\in\HAQ{n+1}(\Lambda;\Omega^{n}\Lambda)} for
distinguishing between possible extensions \w[.]{\qWd{n+1}}
\end{enumerate}
\noindent See \cite{DKStB,BDGoeR} for further details, with additional 
variants in \cite{BJTurR}.  

A different version of this theory allows one to determine 
whether a  graded commutative ring \w{R_{\ast}} is isomorphic to
\w{\pis S} for some commutative ring spectrum $S$ (see \cite{GHopM}). 
We note, however, that the main application of this theory (see \cite{Goe}) 
relies on a large scale vanishing of relevant Andr\'e-Quillen cohomology
groups, which of course guarantees vanishing of the obstructions.
The approach we describe here characterizes the obstructions directly, 
and more explicitly. We hope that this will open the door to addressing 
a broader range of realization questions using these techniques. 
For a simple example, see \S\ref{crht}.
\end{mysubsection}

\begin{mysubsection}{Main results}\label{smainr}
The aim of this paper is to make explicit the close connection between
these two approaches, by showing that the higher homotopy operations
correspond in a systematic way to the Andr\'e-Quillen obstruction classes.

For this purpose, we first study Andr\'{e}-Quillen cohomology for
general universal algebras, showing how it can be calculated using a
cochain complex, and providing an explicit description of the
$k$-invariants of a simplicial algebra (see Proposition \ref{pmoore}
and Corollary \ref{cco} below).

The $n$-th existence obstruction \w{\beta_{n}} mentioned above is in
fact the $k$-invariant for the simplicial \Pa \w[,]{\pis\qWd{n}} so
we can use this description to analyze \w[.]{\beta_{n}}
We then explain how essentially the same inductive process for
realizing $\Lambda$ (now using simply a truncated simplicial object 
\w[,]{\qVd{n+1}} rather than a Postnikov section) has another
obstruction theory in terms of higher homotopy operations, which are  
subsets \w{\llrr{\Psi^{n+2}_0}} of
\w[.]{[\bigvee_{\Psi^{n+2}_0}\,\Sigma^{n}\bV{n+2},~V_{0}]}
After defining a natural \emph{correspondence homomorphism} 
\w{\wPh{n}:[\bigvee_{\Psi^{n+2}_0}\,\Sigma^{n}\bV{n+2},~V_{0}]\to
\HAQ{n+2}(\Lambda;\Omega^{n+2}\Lambda)} in \S \ref{sch},
we construct certain natural \emph{minimal} values in
\w{\llrr{\Psi^{n+2}_0}} and prove:

%
%
\begin{thma}
The homomorphism \w{\wPh{n}} maps each minimal value of 
\w{\llrr{\Psi^{n+2}_0}} to the corresponding Andr\'{e}-Quillen
obstruction \w{\beta_{n}} to realizing $\Lambda$, so if the minimal
value vanishes, so does \w[.]{\beta_{n}} Conversely, if the cohomology
obstruction \w{\beta_{n+1}} associated to the next step vanishes, so does 
\w[.]{\llrr{\Psi^{n+2}_0}} 
\end{thma}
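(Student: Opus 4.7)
The plan is to establish the theorem by constructing an explicit comparison between cocycle representatives on both sides. On the algebraic side, the $n$-th existence obstruction $\beta_n$ has been identified as the $k$-invariant of the simplicial \Pa \w[,]{\pis\qWd{n}} and by Proposition \ref{pmoore} and Corollary \ref{cco} this is represented by a specific cocycle in a cochain complex computing \w[.]{\HAQ{n+2}(\Lambda;\Omega^{n}\Lambda)} On the geometric side, a minimal value of \w{\llrr{\Psi^{n+2}_0}} arises from a coherent system of null-homotopies assembled into a single map \w[.]{\bigvee_{\Psi^{n+2}_0}\,\Sigma^{n}\bV{n+2}\to V_{0}} The strategy is to show that the correspondence homomorphism \w{\wPh{n}} \wh which is essentially induced by applying \w{\pis} \wh sends one representative to the other.

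For the first (``forward'') direction I would proceed inductively on the simplicial dimension. Given a partial realization \w{\qVd{n+1}} extending the geometric realization of the first \w{n+1} levels of the resolution \w[,]{\Gd} the minimal value is constructed by making compatible choices of fillers for every face composite leading down to \w{V_{0}} in the indexing lattice, and then combining these into a single map out of a wedge of suspended spheres \w[.]{\bV{n+2}} The key computational step is to verify that when we apply \w{\pis} to this map, and track the result through the identification of Proposition \ref{pmoore}, what we obtain is precisely the Moore cochain producing the $k$-invariant of \w[.]{\pis\qVd{n+1}\simeq\pis\qWd{n}} This comparison is essentially forced: both objects encode the obstruction to extending the given \wwb{n+1}truncated data by one further simplicial level, and the ``minimality'' of the value exactly removes the freedom that would otherwise allow different cocycle representatives within the same cohomology class.

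For the converse, assume \w{\beta_{n+1}=0} so that \w{\qWd{n}} extends to an \wwb{n+2}Postnikov section \w[.]{\qWd{n+1}} This cohomological vanishing, read back through the cochain complex description, provides a coboundary trivializing the obstruction cocycle at the next stage. Transferring this data back along the comparison between Postnikov sections and truncated simplicial objects supplies precisely the coherent null-homotopies required to realize \w{0\in\llrr{\Psi^{n+2}_0}} as a value of the higher homotopy operation, since by construction a minimal value vanishes if and only if the whole set \w{\llrr{\Psi^{n+2}_0}} contains zero.

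The main obstacle will be the careful bookkeeping in the middle step: the Postnikov section \w{\qWd{n}} and the truncated simplicial object \w{\qVd{n+1}} carry the obstruction information in different formats \wh one cuts off homotopy groups above dimension $n$ in each simplicial degree, while the other cuts off in simplicial dimension \wh and the cocycle description of \w{\beta_{n}} must be shown to correspond, under \w[,]{\wPh{n}} to the explicit wedge-of-spheres map produced by the minimal-value construction. This requires matching the latching and matching data of the two resolutions term by term in the Moore complex, and verifying that the correspondence homomorphism defined in Section \ref{sch} is compatible with both identifications.
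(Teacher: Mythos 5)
Your high-level strategy --- compare cocycle representatives, identify the correspondence homomorphism as ``apply \w[,]{\piA}'' and track through Proposition \ref{pmoore} --- is the right general orientation, and you correctly identify the format mismatch (Postnikov truncation vs.\ simplicial truncation) as the crux. But the proposal is missing the technical mechanism that the paper uses to bridge that mismatch, and without it the argument is not actually a proof.

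The gap is this: Proposition \ref{pexistobst} exhibits a cocycle representative for \w{\beta_{n}} as the map \w[,]{d_{0}\circ\bdz{\bV{n+2}}:\bV{n+2}\to\mZ{n}\qVd{n+1}} which lives at simplicial level $n$, whereas a minimal value of \w{\llrr{\Psi^{n+2}_{0}}} is a map \w{\Sigma^{n}\bV{n+2}\to V_{0}} at level $0$. Getting from one to the other requires desuspending $n$ times through the spiral long exact sequence \wref[,]{eqspiral} and this is exactly what the paper's \emph{ladder diagrams} (\S \ref{sladders}, Definition \ref{dladder}) encode: the isomorphisms \w{s_{i}} of the spiral sequence --- which are isomorphisms here precisely because \w{\qWd{n}} is a quasi-Postnikov section --- are realized geometrically by successive choices of null-homotopies \w[.]{H_{i}} Proposition \ref{pkinv} shows the resulting \w{\gamma_{0}:\Sigma^{n}\bV{n+2}\to V_{0}} still represents \w[,]{\beta_{n}} and Proposition \ref{pladmin} gives the precise bijection between such ladders and minimal values of the higher operation. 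Your proposal just says ``apply \w{\pis} and track through Proposition \ref{pmoore},'' but a single application of \w{\piA} to a map \w{\bV{n+2}\to\mZ{n}\qVd{n+1}} does not produce a map \w[;]{\Sigma^{n}\bV{n+2}\to V_{0}} the \emph{iterated} descent via the fibration sequences \wref{eqsn} is the whole content here, and saying the comparison ``is essentially forced'' because both sides measure the same thing is not an argument --- it is exactly the thing to be proved.

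Your converse is also off on indexing. You write ``assume \w{\beta_{n+1}=0} so that \w{\qWd{n}} extends to \w[,]{\qWd{n+1}}'' but that extension is controlled by \w[,]{\beta_{n}} not \w[.]{\beta_{n+1}} The point of the shift, as the paper flags after Corollary \ref{cvanish}, is that \w{\beta_{n}} controls the existence of an $n$-th semi-Postnikov section while \w{\llrr{\Psi^{n+2}_{0}}} controls the existence of a realization of \w[,]{\tau_{n+2}\Gd} and these are not the same statement; one needs \w{\beta_{n+1}=0} (one step further in the Postnikov tower) and then Proposition \ref{pladmin} applied to the resulting \w{\gamma_{0}\sim 0} to conclude that the operation itself vanishes. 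Your argument, as written, would at best give that some minimal value maps to zero under \w[,]{\wPh{n}} which is weaker than the vanishing of the set.
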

\noindent [See Theorem \ref{thhaq} and Corollary \ref{cvanish}].

Next, we also provide an explicit description of the cohomological
difference obstructions \w{\delta_{n}} for distinguishing between
inequivalent \wwb{n+1}Postnikov sections \w{\qVd{a}} and \w{\qVd{b}}
of resolutions of $\Lambda$. We use the same formalism employed in
constructing \w{\llrr{\Psi^{n+2}_0}} to define a corresponding higher
homotopy operation difference obstruction \w{\llrr{\bdz{a},\bdz{b}}} 
in \w[,]{[\Sigma^{n+1}\bV{n+2},V_{0}]} with its own minimal values, and
show:
%
%
\begin{thmb}
The correspondence homomorphism maps a minimal value of 
\w{\llrr{\bdz{a},\bdz{b}}} to the Andr\'{e}-Quillen
obstruction \w[.]{\delta_{n}}
\end{thmb}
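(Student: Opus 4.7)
The plan is to mirror the proof of Theorem A, adapted to the difference setting where we compare two $(n+1)$-truncated realizations $\qVd{a}$ and $\qVd{b}$ of the same \Pa $\Lambda$ which agree through simplicial dimension $n$. First I would recall the explicit construction of $\llrr{\bdz{a},\bdz{b}}$: the boundary maps $\bd{a},\bd{b}:\bV{n+2}\to\bW_{n+1}$ coincide on $\partial\bV{n+2}$ up to specified higher homotopies furnished by the two extensions, so their formal difference, composed with the iterated face structure already fixed by $\qVd{n}$, assembles into a single map $\Sigma^{n+1}\bV{n+2}\to V_0$. A minimal value is then obtained by making the same canonical compression choices used for $\llrr{\Psi^{n+2}_0}$ in Theorem A; the only change is that the resulting value is a single homotopy class rather than a family indexed by a wedge decomposition, reflecting the fact that $\delta_n$ lives in one lower cohomological degree than $\beta_n$.

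Next I would compute the evident (degree-shifted) correspondence map $\wPh{n}$ applied to this minimal value. By Proposition \ref{pmoore} and Corollary \ref{cco}, a cocycle representing $\delta_n\in\HAQ{n+1}(\Lambda;\Omega^n\Lambda)$ can be read off from the induced maps on homotopy groups: it records the difference $\pis(\bd{a})-\pis(\bd{b})$ modulo degeneracy and boundary terms in the Moore-style cochain complex of $\pis\qVd{n}$, viewed as a simplicial \Pa[.] Since the minimal value of $\llrr{\bdz{a},\bdz{b}}$ is, after applying $\pis$, built from exactly this same difference of face maps (via adjunction between the iterated suspension and the canonical loop-type functor used to define $\wPh{n}$), the image equals the chosen cocycle representing $\delta_n$ on the nose.

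The last step is to verify that different legitimate choices of minimal value land on cohomologous cocycles, so the identification passes to the Andr\'{e}-Quillen class. The indeterminacy in a minimal value is parametrized by choices of null-homotopies along $\partial\bV{n+2}$, and these correspond termwise to the coboundaries in the Andr\'{e}-Quillen cochain complex for $\Lambda$ with coefficients in $\Omega^n\Lambda$. The main obstacle I anticipate is bookkeeping: aligning the simplicial identities that force $\delta_n$ to be a cocycle with the geometric face-and-degeneracy data used when trivializing the minimal value on $\partial\bV{n+2}$, and checking that the two indeterminacies match dimension by dimension. Once this alignment is established, the argument is a direct translation of the proof of Theorem A into one lower cohomological degree, with the \Pa morphism $\pis\qVd{a}\to\pis\qVd{b}$ that the difference bracket detects playing the role of the single existence obstruction from Theorem A.
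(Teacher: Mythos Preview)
Your high-level plan---parallel Theorem~A---is right, but you miss the device that makes the parallel literal. The paper does not redo the ladder-diagram and flag-complex analysis ``one degree lower''; instead (\S\ref{sdhho}) it builds an auxiliary $(n+2)$-truncated simplicial object $\Yd$ with $\tau_{n+1}\Yd=\tau_{n+1}\Wd$, $Y_{n+2}:=\mZ{n+1}\Wd$ (with $d_0$ the inclusion and $d_i=0$ for $i>0$), $\bY{n+3}:=\bV{n+2}$, and attaching map $\bdz{\bY{n+3}}:=\odel=\bdz{a}\cdot(\bdz{b})^{-1}$, the product taken via the homotopy-cogroup structure on $\bV{n+2}$. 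The difference operation $\llrr{\bdz{a},\bdz{b}}$ is then \emph{defined} to be the existence operation of \S\ref{dhhoa} applied to $\Yd$, and Proposition~\ref{pdiffobst} identifies $\delta_n$ with the class represented (in the sense of Corollary~\ref{cco}) by $\odel:\bV{n+2}\to\mZ{n+1}\Wd$. With this recasting, Propositions~\ref{pkinv} and~\ref{pladmin} apply verbatim to $\Yd$, and the paper's proof of Theorem~\ref{tdiffobst} is literally one line: ``the proof of Theorem~\ref{thhaq} transfers word for word.''

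Without the auxiliary $\Yd$, your approach forces you to rebuild Propositions~\ref{pkinv} and~\ref{pladmin} by hand in a ``difference'' version, which is precisely the bookkeeping you flag as the main obstacle---and which the paper avoids entirely. Two further points: the indexing in Section~\ref{cdo} has $\delta_n\in\HAQ{n+2}(\Lambda,\Omega^{n+1}\Lambda)$ and the operation landing in $[\Sigma^{n+1}\bV{n+2},V_0]$, i.e.\ one \emph{more} suspension than in the existence case, so the shift is in the coefficient module (one extra $\Omega$), not one lower cohomological degree as you state. And there is no ``$\partial\bV{n+2}$'' in this setting: $\bV{n+2}$ is simply an object of $\M$, and the comparison of $\bdz{a}$ and $\bdz{b}$ is made via the cogroup structure, not by restriction to a boundary.
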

\noindent [See Theorem \ref{tdiffobst}]\vsm.

As an application, we use this theory to show that for any connected
graded Lie algebra $\Lambda$ over $\bQ$, there is a branch of the
obstruction theory for which all cycles representing the 
Andr\'{e}-Quillen obstructions \w{\beta_{n}} to realizing $\Lambda$ vanish
(see Proposition \ref{pratpa} below). From this we recover
the well-known fact, due to Quillen, that any simply-connected
rational \Pa is realizable.
\end{mysubsection}

\begin{mysubsection}{Notation and conventions}\label{snac}
The category of topological spaces is denoted by $\TT$, and that of
pointed connected spaces by \w[.]{\Ta} 
For any category $\C$, \w{s\C:=\C^{\bDelta\op}} is the category of
simplicial objects over $\C$. We abbreviate \w{s\Set} to \w{\Ss} and
\w{s\Sets} to \w[;]{\Sa} \w{\Sr} denotes the category of
\emph{reduced} simplicial sets. The constant simplicial object on an
object \w{X\in\C} is written \w[,]{\co{X}\in s\C} and the $n$-\emph{truncation}
of \w{\Gd\in s\C} (forgetting \w{G_{i}} for \w[)]{i>n} is denoted
by \w[.]{\tau_{n}\Gd} The \emph{$n$-skeleton} functor is left adjoint
to the truncation functor \w[.]{\tau_{n}} However, we reserve the
notation \w{\sk{n}:s\C\to s\C} for the composite of the $n$-skeleton
functor with \w[.]{\tau_{n}} The \emph{$n$-coskeleton} functor
\w{\csk{n}:s\C\to s\C} is right adjoint to \w[.]{\sk{n}}

We denote by \w{\Grp} the category of groups, by \w{\Abgp} that of abelian
groups, and by \w{\Gpd} that of groupoids. When $\A$ is an abelian
category, \w{\Ch(\A)} denotes the category of (non-negatively
graded) chain complexes over $\A$. 

If \w{\lra{\V,\otimes}} is a monoidal category, \w{\VC}
is the collection of all (not necessarily small) categories enriched over $\V$
(see \cite[\S 6.2]{BorcH2}). For any set $\OO$, denote by \w{\OC} the
category of all small categories $\D$ with \w[.]{\Obj\D=\OO} A
\ww{\VO}-\emph{category} is a category \w{\D\in\OC} enriched over
$\V$, with mapping objects \w[.]{\map_{\D}(-,-)\in\V}
The category of all small \ww{\VO}-categories will be denoted by \w[.]{\VOC}
The main examples of \w{\lra{\V,\otimes}} we have in mind are
\w[,]{\lra{\Ss,\times}} \w[,]{\lra{\Sa,\wedge}} and \w[.]{\lra{\Gpd,\times}}

Note that because the Cartesian product on $\Ss$  (and the smash
product on \w[)]{\Sa} are defined levelwise, we can think of an
\ww{\SO}-\ or \ww{\SaO}-category as a simplicial object
over \ww{\OC} \wwh that is, a simplicial category with fixed object
set $\OO$ in each dimension, and all face and degeneracy functors the
identity on objects.
\end{mysubsection}

\begin{mysubsection}{Organization}
\label{sorg}
In Section \ref{cetmc} we provide some background on \PAa[s] and the
related resolution model categories used in this paper. In Section
\ref{cgac} we prove some basic facts about the cohomology of (graded)
universal algebras.  Section \ref{chaho} discusses rectification of
homotopy-commutative diagrams, and the higher homotopy operations
which appear as the obstructions to such rectification. Section
\ref{caqco} analyzes the Andr\'{e}-Quillen cohomology existence
obstructions to realizing a \PAa $\Lambda$, and Section \ref{chhoeo}
defines the higher homotopy operation version of these obstructions.
In Section \ref{cmhho} we describe certain minimal values of these
higher homotopy operations, and prove Theorem \ref{thhaq}.  Section
\ref{cdo} shows how the difference obstructions (both cohomological
and higher homotopy operation versions) may be treated analogously,
yielding  Theorem \ref{tdiffobst}. Finally, Section \ref{crht} briefly
discusses rational homotopy theory.
\end{mysubsection}

\begin{ack}
This research was supported by BSF grant 2006039. The third author was
supported by a Calvin Research Fellowship (SDG).
\end{ack}

%
%
\sect{Model categories}
\label{cetmc}

This paper deals primarily with homotopy theory of pointed connected
topological spaces, with the usual homotopy groups. However, some of
the results hold more generally, so we now introduce axiomatic
descriptions of some more general settings.

\begin{defn}\label{dssmc}
Let $\M$ be a pointed simplicial model category (cf.\ \cite[II, \S 1]{QuiH}),
so that for every simplicial set $K$ and \w{X\in\M} we
have a object \w{K\hotimes X} in $\M$. In particular, we call 
\w{\bS{1}\hotimes X} the \emph{half-suspension} of $X$, and by choosing
a basepoint \w{\pt} in \w[,]{S^{1}} we define the  
$n$-\emph{fold suspension} \w{\Sigma^{n} X} to be 
\w[.]{(\bS{n}\hotimes X)/(\{\pt\}\hotimes X)} Thus for 
\w[,]{X\in\Ta} we have 
\w[,]{K\hotimes X=|K|\ltimes X:=(|K|\times X)/(|K|\times\{\ast\})} 
so \w[.]{\Sigma^{n}X:=|\bS{n}|\wedge X}

Now let $\A$ be a collection of homotopy cogroup objects in $\M$
(sometimes called \emph{spherical objects}). For any object
\w[,]{Y\in\M} its $\A$-\emph{homotopy groups}  
are \w[,]{\piA Y=(\piul{A}{n}Y)_{A\in\A,n\in\bN}} where
\w[.]{\piul{A}{n}Y:=[\Sigma^{n}A,Y]_{\M}}
A map \w{f:Y\to Z} in $\M$ is called an \emph{$A$-equivalence} if it
induces an isomorphism in \w[.]{\piA}
We denote by \w{\PiA} the full subcategory of \w{\M} whose objects are
finite coproducts of suspensions of elements of $\A$. A
product-preserving functor \w{\Lambda:\PiA\op\to\Sets} is
called a \emph{\PAa[,]} and the category of such is denoted by
\w[.]{\PAAlg} When we wish to emphasize the dependence on $\M$, we
call these \emph{\PAMas[,]} and denote the category by \w[.]{\PAMAlg}
We write \w{\Lambda\lin{B}} for the value of $\Lambda$
at an object \w[.]{B\in\PiA} We denote by \w{\MA} the
smallest full subcategory of $\M$ containing $\A$ and closed under
suspensions, arbitrary coproducts, and weak equivalences.

When \w{\M=\Sr} (or \w[)]{\Ta} and \w{\A:=\{S^{1}\}}
these are called simply \Pa[s,] and the category is denoted by
\w{\PAlg} (cf.\ \cite{StoV}).
\end{defn}

\begin{example}\label{egpa}
The canonical example of a \PAa is a \emph{realizable} one,
denoted by \w[,]{\piA X} and defined for fixed \w{X \in\M} by
\w{A\mapsto[A,X]_{\ho\M}} for all \w[.]{A\in\PiA} This defines a
functor \w[.]{\piA:\ho\M\to\PAAlg}  Thus when
\w[,]{\A=\{S^{1}\}} so \w[,]{\PAAlg=\PAlg} a realizable
\Pa consists of the sequence of groups \w[,]{\pis X} equipped with
the action of the primary homotopy operations on them (compositions,
Whitehead products, and action of the fundamental group). This is
called the \emph{homotopy \Pa[]} of $X$.
\end{example}

\begin{defn}\label{dgalg}
Let $\Theta$ be an \emph{FP-sketch}, in the sense of
Ehresmann (cf.\ \cite{EhreET}) \wh that is, a small category with a
distinguished collection $\PP$ of products.  A \emph{\Tal} is a functor
\w{\Lambda:\Theta\to\Set} which preserves the products in $\PP$. We think 
of a map \w{\phi:\prod_{i=1}^{n}\,a_{i}\to\prod_{j=1}^{m}\,b_{j}} 
in $\Theta$ as representing an $m$-valued $n$-ary operation on \Tal[s,]
with gradings indexed by \w{(a_{i})_{i=1}^{n}} and
\w[,]{(b_{j})_{j=1}^{m}} respectively.

The category of \Tal[s] is denoted by \w[.]{\TAlg}
If \w{\Obj(\Theta)} is generated under the products in $\PP$ by a set
$\OO$, there is a forgetful functor \w{U:\TAlg\to\Set^{\OO}} into
the category of $\OO$-graded sets, with left adjoint the \emph{free \Tal}
functor \w[.]{F:\Set^{\OO}\to\TAlg}
\end{defn}

\begin{example}\label{eggalg}
The main type of \Tal[s] considered in this paper are those with 
\w[,]{\Theta=\PiA\op} \w[,]{\OO=\A} and $\PP$ the set of finite 
coproducts of objects in $\A$. Note that these are products in \w[.]{\PiA\op} 

In particular, let \w{\Pi_{1}} denote the homotopy category of finite
wedges of circles: that is, the full subcategory of \w{\ho\Ta} with object
set \w[.]{\{\bigvee_{i=1}^{n}\,S\sp{1}\}_{n=0}^{\infty}} Then
\w[,]{\fG:=\Pi_{1}\op} with \w[,]{\PP=\Obj(\fG)} is the theory
representing groups \wh that is, \w{\Alg{\fG}} is naturally equivalent
to \w[.]{\Grp} Similarly, \w{\fG^{\bN}} represents $\bN$-graded groups.

We define a $\fG$-\emph{theory} to be an FP-sketch $\Theta$
equipped with an embedding of sketches \w[,]{\fG^{\OO}\hra\Theta} for
$\OO$ as in \S \ref{dgalg}. In this case, any \Tal $X$ has a
natural underlying $\OO$-graded group structure. We do not
require the operations of a $\fG$-theory to be homomorphisms (that
is, commute with the $\fG$-structure).
\end{example}

\begin{defn}\label{dmod}
As in \cite{BecT} or \cite[\S 1]{QuiC}, we define a \emph{module} over a
\Tal $\Lambda$ to be an abelian group object \w{p:\K\to\Lambda} in 
\w[.]{\TAlg/\Lambda} Note that 
\w{0\in\Hom_{\TAlg/\Lambda}(\Lambda,M)} then provides a section for the
structure map $p$. If for each \w{v\in\OO} we let
\w[,]{\baK_{v}:=\{x\in\K(v)~: \ p(x)=0\}} then \w{\baK_{v}} has an abelian group
structure induced from that of \w[,]{\K\to\Lambda} and for each \w{f:u\to v}
in \w{\Theta} we have a homomorphism of abelian groups
\w[.]{f^{\ast}:\baK_{v}\to\baK_{u}} Thus $\K$ is completely determined by
the \emph{restricted module} functor \w[.]{\baK:\Theta\to\Abgp} We denote
the category of restricted modules by \w[,]{\RL} with \w{\K\mapsto\baK}
defining an equivalence of categories \w[.]{(\TAlg/\Lambda)\ab\to\RL} 
\end{defn}

\begin{remark}\label{rga}
The identification of the half-suspension
\w{\bS{n}\hotimes A} with \w{\Sigma^{n}A\vee A} for a
homotopy cogroup object $A$ (cf.\ \cite{BJiblSL}) makes
\w{[\bS{n}\hotimes A,Y]} into a module over \w{\hpi Y} in the sense of
\S \ref{dmod}. This allows us to think of
\w{\piAn{n}Y:=[\Sigma^{n}A,Y]} itself as a restricted
\ww{\piAn{0}Y}-module.

Moreover, for any \PAa $\Lambda$, we may define an abelian \PAa\
\w{\Omega\Lambda} by setting
\w[.]{(\Omega\Lambda)\lin{A}:=\Lambda\lin{\Sigma A}} This has a
natural structure of a restricted $\Lambda$-module (see
\cite[\S 9.4]{DKStB}, and compare \cite[\S 1.11]{BBlaC}).
\end{remark}

\begin{mysubsection}{Resolution model categories}
\label{srmc}
Let $\M$ be a pointed, cofibrantly generated, right proper simplicial
model category, equipped with a collection $\A$ of homotopy cogroup objects.
The category \w{s\M} of simplicial objects over $\M$ has a
\emph{resolution model category} structure, 
in which a map \w{f:\Wd\to\Vd} in \w{s\M} is a weak equivalence
if and only if the induced map of simplicial groups
\w{f_{\#}:\piul{A}{n}\Wd\to\piul{A}{n}\Vd}
is a weak equivalence for each \w{A\in\A} and \w[.]{n\geq 0}
See \cite{BouC} and \cite{JardB} for further details.

Moreover, \w{s\M} has its own simplicial structure \w{(s\M,\otimes)}
(cf.\ \cite[II, \S 1]{QuiH}), and thus has a set of spherical objects:
\w{(\bS{n}\otimes\co{\Sigma^{i}A})/(\{\pt\}\otimes\co{\Sigma^{i}A})} 
for \w[.]{A\in\A; i,n\in\bN} 
The \emph{natural} homotopy groups of a simplicial object \w{\Xd\in s\M}
are defined by 
\w{\pinat{n,i,A}\Xd:=[\bS{n}\otimes\Sigma^{i}A,\Xd]_{s\M}} for
\w[.]{A\in\A} Setting \w[,]{\pinat{n}\Xd:=\{\pinat{n,i,A}\Xd\}_{A\in\A,i\in\bN}}
it may be shown that \w{\pinat{n}\Xd} has a natural \PAa structure (see
\cite[\S 5]{DKStB}).

To describe some basic constructions in \w[,]{s\M} recall that the
$n$-th \emph{Moore chains} object of a Reedy fibrant simplicial object
\w{\Xd} is defined:
\begin{myeq}\label{eqmoor}
\mC{n}\Xd~:=~\cap_{i=1}^{n}\Ker\{d_{i}:X_{n}\to X_{n-1}\}~,
\end{myeq}
\noindent with differential \w[.]{\partial_{n}^{\Xd}=\partial_{n}:=
(d_{0})\rest{\mC{n}\Xd}:\mC{n}\Xd\to\mC{n-1}\Xd}
The $n$-th \emph{Moore cycles} object is 
\w[.]{\mZ{n}\Xd:=\Ker(\partial_{n}^{\Xd})}

It turns out that under mild assumptions on $\M$
the inclusion \w{\iota:\mC{n+1}\Xd\hra X_{n+1}} induces an isomorphism
\begin{myeq}\label{eqcommmoor}
\iota_{\star}~:~\piA\mC{n+1}\Xd~\to~\mC{n+1}\piA \Xd
\end{myeq}
\noindent (see \cite[Lemma 2.7]{StoV} or \cite[Prop.\ 2.7]{BlaCW}),
which fits into a commuting diagram of \PAa[s] with exact rows:
\mydiagram[\label{eqhurewicz}]{
\piA \mC{n+1}\Xd \ar[rr]^{(\partial_{n+1}^{\Xd})_{\#}}
\ar[d]_{\iota_{\star}}^{\cong} &&
\piA \mZ{n}X \ar@{->>}[r]^{\hat{\vartheta}_{n}} \ar[d]^{\hat{\iota}_{\star}} &
\pinat{n}\Xd \ar@{.>}[d]^{h_{n}} \\
\mC{n+1}(\pis\Xd) \ar[rr]^{\partial_{n+1}^{\piA\Xd}} &&
\mZ{n}(\piA\Xd) \ar@{->>}[r]^{\vartheta_{n}} & \pi_{n}(\piA\Xd)~.
}
\noindent This defines the \emph{Hurewicz map}
\w{h_{n}:\pinat{n}\Xd\to \pi_{n}\piA\Xd} in the
\emph{spiral long exact sequence} of $\A$-graded groups:
\begin{myeq}[\label{eqspiral}]
\begin{split}
\dotsc~\to~  \Omega\pinat{n-1}\Xd & ~\xra{s_{n}}~
\pinat{n}\Xd~\xra{h_{n}}~\pi_{n}\piA\Xd~\xra{\partial_{n}} \\
\to~& \Omega\pinat{n-2}\Xd~\to~\dotsc~\to ~\pinat{1}\Xd~\to~
\pi_{1}\piA\Xd
\end{split}
\end{myeq}
\noindent (cf.\ \cite[8.1]{DKStB}), where \w{s_{n}} is induced by the
connecting homomorphism in \w{\piA} for the fibration sequence
in $\M$: 
\begin{myeq}[\label{eqsn}]
\mZ{n}\Xd~\xra{j_{n}}~\mC{n}\Xd~\xra{d_{0}}~\mZ{n-1}\Xd~.
\end{myeq}
\end{mysubsection}

\begin{example}\label{egrmc}
When \w{\M=\Ta} and \w[,]{\A=\{S^{1}\}} the
resolution model category of simplicial spaces is the original
\ww{E^{2}}-model category of \cite{DKStE}.
\end{example}

\begin{remark}\label{rrmc}
If $\Theta$ is a $\fG$-theory (\S \ref{eggalg}), the monogenic free
\Tal[s] constitute a collection $\A$ of (strict) cogroup objects in
\w{\M=\TAlg} (with the trivial model category structure). Since maps
between free \Tal[s] represent the operations in $\Theta$, 
in this case a \PAMa\ may be identified with a \Tal[,] so there is a
canonical equivalence of categories \w[.]{\PAMAlg\approx\M}
This applies in particular to \w{\Theta=\PiA} itself, so that 
\w[.]{\Alg{\PiA\sp{\PAAlg}}\approx\PAAlg}

In this case, the resolution model category structure on
\w{s\TAlg} is Quillen's  model category for simplicial universal
algebras (cf.\  \cite[II, \S 4]{QuiH}), and \w{\pinat{n}\Gd} is 
the graded group \w[,]{\pi_{n}\Gd} equipped with a natural \Tal
structure. Any \w{\Gd\in s\TAlg} for which each \w{G_{n}} is free, and
the degeneracy maps take generators to generators, is cofibrant.
\end{remark}

\begin{mysubsection}{$E^{2}$-model categories}\label{setmc}
If $\M$ is a pointed model category as in \S \ref{srmc} with a
collection of spherical objects $\A$ (\S \ref{dssmc}), the resolution model
category \w{s\M} is called an \ww{E^{2}}-\emph{model category} if
it is equipped with:  

\begin{enumerate}
\renewcommand{\labelenumi}{(\alph{enumi})\ }
\item A functorial \emph{Postnikov tower} of fibrations (in \w[)]{s\M}
  for each \w[:]{\Wd\in s\M}
$$
\dotsc \to \Po{n}\Wd\xra{p\q{n}}\Po{n-1}\Wd\xra{p\q{n-1}}\dots\to\Po{0}\Wd~,
$$
\noindent equipped with a weak equivalence
\w[,]{r:Z\to\Po{\infty}\Wd:=\lim_{n}\Po{n}\Wd} as well as fibrations
\w[,]{r\q{n}:\Po{\infty}\Wd\to\Po{n}\Wd} such that \w{p\q{n}} and
\w{r\q{n}} induce ismorphisms in \w{\pinat{i}} for \w[,]{i\leq n}
and \w{\pinat{i}\Po{n}\Wd=0} for \w[.]{i>n}
\item For every \PAMa\ $\Lambda$, there is a functorial
  \emph{classifying object} \w[,]{B\Lambda\in s\M} unique up to homotopy, with
  \w{B\Lambda\simeq\Po{0}B\Lambda} and \w[.]{\pinat{0}B\Lambda\cong\Lambda}
\item Given a \PAMa\ $\Lambda$ and a $\Lambda$-module $K$, for each \w{n\geq 1}
  there is a functorial fibrant \emph{Eilenberg-Mac~Lane object}
  \w{E=\tEL{K}{n}} in \w[,]{s\M/B\Lambda} unique up to homotopy,
  equipped with a section $s$ for
  \w[,]{(r\q{0}\circ r):E\to\Po{0}E\simeq B\Lambda} such that
  \w{\pinat{n}E\cong K} as a $\Lambda$-module, and \w{\pinat{i}E=0} for
  \w[.]{1\leq i\neq n}
\item For every \w[,]{n\geq 0} there is a functor that assigns to each
  \w{\Wd\in s\M} with \w{\pinat{0}\Wd=\Lambda} a homotopy pull-back square:
\mydiagram[\label{eqkinv}]{
\ar @{} [dr] |<<<{\framebox{\scriptsize{PB}}}
\Po{n+1}\Wd \ar[r]^{p\q{n+1}} \ar[d] &
\Po{n}\Wd \ar[d]^{k_{n}}\\ B\Lambda \ar[r] & \tEL{\pinat{n+1}\Wd}{n+2}
}
\noindent (in \w[)]{s\M} with \w{k_{n}} the $n$-th
$k$-\emph{invariant} for \w[.]{\Wd} 
\item A \emph{realization} functor \w[,]{J:s\M\to\M} such that, for
\w{\Lambda\in\PAMAlg} and cofibrant \w[,]{\Xd\in s\M} if
\w{\piA\Xd\stackrel{\sim}{\to}\tBL} is a weak
equivalence in \w[,]{s\PAMAlg} then there is an isomorphism:
\begin{myeq}[\label{eqrealwe}]
[A,J\Xd]_{\M} \stackrel{\cong}{\rightarrow}\Hom_{\PAMAlg}(\piA A,\Lambda)~,
\end{myeq}
\noindent natural in $\Lambda$ and \w[.]{A\in\A}
\end{enumerate}
\end{mysubsection}

\begin{remark}\label{rsmc}
In all the cases we are interested in, the coskeleton \w{\csk{n+1}\Wd}
(\S \ref{snac}) provides the functorial Postnikov section
\w{\Po{n}\Wd} for Reedy fibrant simplicial objects \w{\Wd\in s\M}
(cf.\ \cite[\S 15]{PHirM}). 
\end{remark}

\begin{examples}\label{egetmc}
The two main $E^{2}$-model categories we have in mind are:
\begin{enumerate}
\renewcommand{\labelenumi}{(\arabic{enumi})}
\item \w{\M=\Ta} or \w[,]{\Sa} with \w[.]{\A=\{S^{1}\}} In this case
$J$ is the usual realization functor, and \wref{eqrealwe} follows from the
collapse of the Bousfield-Friedlander spectral sequence (cf.\
\cite[Theorem B.5]{BFrieH}).  
\item \w[,]{\M=\TAlg} the category of \Tal[s] for some $\fG$-theory
$\Theta$, as in \S \ref{rrmc}, and $\A$ the monogenic free \Tal[s.] 
Here \w{J\Xd:=\pi_{0}\Xd} (so \w{\piA\Xd\stackrel{\sim}{\to}\tBL} is a
weak equivalence in \w{s\PAMAlg\approx s\M} as above if and only if 
\w{\var:\Xd\to\Lambda} is a resolution), and the
isomorphism \wref{eqrealwe} is induced by $\var$.
\end{enumerate}
For further examples, see \cite[\S 3]{BJTurR}. 
\end{examples}

\begin{defn}\label{dcw}
Let $\M$ be a pointed model category. A simplicial object \w{\Gd\in s\M} is
called a \emph{CW object} if:

\begin{enumerate}
\renewcommand{\labelenumi}{(\alph{enumi})}
\item for each \w{n\geq 0} there is an object \w{\bG{n}\in\M} such that
\w[.]{G_{n}=\bG{n}\amalg \latch{n}\Gd} Here
\begin{myeq}\label{eqlatch}
\latch{n}\Gd~:=~
\coprod_{0\leq k\leq n}~\coprod_{0\leq i_{1}<\dotsc<i_{n-k-1}\leq n-1}~\bG{k}
\end{myeq}
\noindent is the $n$-th \emph{latching object} of \w[,]{\Gd} in which
the copy of \w{\bG{k}} indexed by \w{(i_{1},\dotsc,i_{n-k-1})} is in
the image of \w[.]{s_{i_{n-k-1}}\dotsc s_{i_{2}}s_{i_{1}}}
\item There is an \emph{attaching map} \w{\bdz{G_{n}}:\bG{n}\to G_{n-1}}
with \w{d_{i}\circ\bdz{G_{n}}=0} for \w[,]{0\leq i\leq n-1} or equivalently,
\w{\bdz{G_{n}}} factors through \w[.]{\mZ{n-1}\Gd \subset G_{n-1}}
\item The face maps of \w{\Gd} are determined by the simplicial
  identities and the requirement that
  \w{(d_{0})\rest{\bG{n}}=\bdz{G_{n}}} and
  \w{(d_{i})\rest{\bG{n}}=0} for \w[.]{1\leq i\leq n}
\end{enumerate}
The collection \w{(\bG{n})_{n=0}^{\infty}} is called a \emph{CW basis} for
\w[.]{\Gd}

When $\A$ is a collection of homotopy cogroup objects in $\M$,
\w{\Gd\to X} is a cofibrant replacement in the resolution model
category structure on \w{s\M} determined by $\A$, and each \w{\bG{n}}
in a CW basis for \w{\Gd} lies in \w{\MA} (\S \ref{dssmc}), we
call \w{\Gd} a \emph{CW resolution} of $X$.
\end{defn}

\begin{remark}\label{stepred}
The category of \wwb{n+2}truncated CW objects \w{\qVd{n+2}}in
$\M$ is equivalent to the category of pairs consisting of an
\wwb{n+1}truncated CW object \w{\qVd{n+1}} and a map 
\w[:]{\bdz{\bV{n+2}}:\bV{n+2}\to \mZ{n+1}\qVd{n+1}}
given such a pair \w[,]{(\Vd,\bdz{\bV{n+2}})}
we obtain a \wwb{n+2}truncated CW object by setting
\w[,]{V_{n+2}:=\bV{n+2}\coprod \latch{n+2}\Vd} with 
\w{(d_{0})\rest{\bV{n+2}}:=\bdz{\bV{n+2}}} and
\w{(d_{i})\rest{\bV{n+2}}=0} for \w[.]{i>0}  The degeneracies are
given by the obvious inclusions into \w[,]{\latch{n+2}\Vd} and the
face maps on \w{\latch{n+2}\Vd} are determined by the simplicial identities.

When \w[,]{\M:=\TAlg} one can use this method inductively to
construct a free CW-resolution of a \Tal $\Lambda$ (with each 
\w{\bV{n+2}} free).
\end{remark}

%
%
\sect{Cohomology of \Tal[s]}
\label{cgac}

In this section we recall the definition of Andr\'{e}-Quillen
cohomology for simplicial \Tal[s,] provide a cochain description for their
cohomology, and give an explicit construction of their
$k$-invariants. Although most of the results are valid more generally,
for simplicity we restrict attention to the case where the \Tal[s] have
an underlying group structure. 

\begin{defn}\label{dcoh}
Let $\A$ be a collection of spherical objects in a pointed model
category $\M$, such that the resolution model category \w{s\M}
is an \ww{E^{2}}-model category (\S \ref{setmc}). Assume given a \PAMa\
$\Lambda$, a $\Lambda$-module $\K$, 
and  an object \w{\Wd\in s\M} equipped with a \emph{twisting map} 
\w[.]{t:\hpi\Wd\to\Lambda} We use $t$, along with the natural map
\w[,]{\Wd\to B\hpi\Wd} to think of \w{\Wd} as an object in
\w[.]{s\M/\tBL} Following \cite{AndrM,QuiC}, we define the $n$-th
\emph{cohomology group of \w{\Wd} with coefficients in $\K$} to be   
$$
\HL{n}(\Wd;\K)~:=~[\Wd,\tEL{\K}{n}]_{s\M/\tBL}~=~
\pi_{0}\map_{s\M/\tBL}(\Wd,\tEL{\K}{n})~,
$$
\noindent where the last mapping space is defined by the (homotopy) pullback:
$$
\xymatrix@R=25pt{
\map_{s\M/\tBL}(\Wd,\tEL{\K}{n}) \ar[d] \ar[r] &
\map_{s\M}(\Wd,\tEL{\K}{n}) \ar[d]^{\left(p\q{0}_{\tEL{\K}{n}}\right)_{\ast}}\\
\{Bt\circ p\q{0}_{\Wd}\}~  \ar@{>->}[r] & \map_{s\M}(\Wd,\tBL)~.
}
$$
\end{defn}

Typically, we have \w[,]{\Lambda=\hpi\Wd} with $t$ an isomorphism; if
in addition \w[,]{\Wd\simeq\tBL} we denote \w{\HL{n}(\Wd;\K)} simply
by \w[.]{\HAQ{n}(\Lambda;\K)} 

\begin{defn}\label{drcoh}
There is also a relative version, for a pair \w{(\Wd,\Yd)} \wh that
is, a cofibration \w{i:\Yd\hra\Wd} in \w{s\M} \wh with $\K$ a
$\Lambda$ module and \w{t:\hpi\Yd\to \Lambda} a twisting map as before.
Let \w{\PO(\Wd,\Yd)} denote the (homotopy) pushout in \w{s\M} of:
$$
\xymatrix@R=25pt{
\Yd \ar[d]_{r} \ar[r]^{i} &\Wd \ar[d] \\
B\hpi\Yd \ar[r] & \PO(\Wd,\Yd)~.
}
$$
\noindent We define \w{\HL{n}(\Wd,\Yd;\K)} to be the group of homotopy
classes of maps
\w{f:\PO(\Wd,\,\Yd)\,\to\,(\tEL{\K}{n},\,\tBL)} 
in \w{s\M/\tBL} fitting into the commutative diagram: 
$$
\xymatrix@R=25pt{
\Yd \ar[d]_{r} \ar[r]^{i} &\Wd \ar[d]^{j} \\
B\hpi\Yd \ar[r] \ar@/_2pc/[dr]_{Bt} & \PO(\Wd,\Yd)\ar[rd]^{f} & \\
& \tBL \ar[r]^{s} & \tEL{\K}{n}
}
$$
\noindent (see \cite[\S 2.1]{DKSmO}).
\end{defn}

\begin{remark}\label{rabel}
Let $\Theta$ be a $\fG$-theory and \w[,]{\M=\TAlg} so
\w{\PAMAlg\approx\M} (cf.\ \S \ref{rrmc}). The constant object
\w{\co{\Lambda}} is a fibrant model for \w[,]{\tBL} so
\w[.]{s\M/\tBL\cong s(\M/\Lambda)}  
In particular, this implies that \w{s((\M/\Lambda)\ab)} may be
identified with the category \w{(s\M/\tBL)\ab} of abelian group
objects in \w[,]{s\M/\tBL} with abelianization functor
\w{\AbL:(s\M/\tBL)\to (s\M/\tBL)\ab} defined dimensionwise
(cf.\ \cite[\S 3.20]{BlaQ}).
\end{remark}

%
%
\begin{prop}\label{pmoore}
Let $\Theta$ be a $\fG$-theory and \w[.]{\M=\TAlg} For any \w[,]{\Lambda\in\M} 
$\Lambda$-module $\K$, cofibrant \w{\Wd\in s\M}  and twisting map $t$,
we have maps
$$
\Hom_{s\M/\tBL}(\Wd,\tEL{\K}{n})~\xla{\zeta}~ 
\Hom_{\Ch(\RL)}(\baC{\ast}\Wd,\bbE)~\xra{\eta}~
\Hom_{\Lambda}(\mC{*}\AbL\Wd,\K)~,
$$
\noindent  which are natural in \w[,]{\Wd} and induce 
an isomorphism
$$
\HL{n}(\Wd;\K)\cong H\sp{n}\Hom_{\Lambda}(\mC{\ast}\AbL\Wd,\K)~
$$
\noindent for each \w[.]{n\geq 0}
\end{prop}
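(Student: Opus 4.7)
The plan is to reduce the computation of $\HL{n}(\Wd;\K)$ to a chain-complex computation by combining the abelianization adjunction for $s\M/\tBL$ with the classical Dold--Kan correspondence. First, since $\tEL{\K}{n}$ is an abelian group object over $\tBL$, Remark~\ref{rabel} (identifying $(s\M/\tBL)\ab \approx s\RL$) together with the abelianization adjunction $\AbL \dashv U$ yields a natural bijection
$$\Hom_{s\M/\tBL}(\Wd,\tEL{\K}{n})\ \cong\ \Hom_{s\RL}(\AbL\Wd,\tEL{\K}{n}).$$
I would then apply Dold--Kan, which equates simplicial $\Lambda$-modules (and their simplicial homotopies) with non-negatively graded chain complexes of $\Lambda$-modules (and their chain homotopies), via the Moore-complex functor $\mC{\ast}$.

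Under this identification, $\AbL\Wd$ is represented by the chain complex $\baC{\ast}\Wd$, which is essentially $\mC{\ast}\AbL\Wd$ but modified to encode the twisting map $t:\hpi\Wd\to\Lambda$ as a fixed augmentation over $\Lambda$. The Eilenberg--Mac~Lane object $\tEL{\K}{n}$ is represented by a chain complex $\bbE$ whose only non-trivial homology is $\K$ in degree $n$ (one may take $\bbE$ to be concentrated in that single degree). The map $\zeta$ is then obtained by running Dold--Kan backwards and then applying the abelianization adjunction, turning a chain map $\baC{\ast}\Wd\to\bbE$ into a simplicial map $\Wd\to\tEL{\K}{n}$ over $\tBL$. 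The map $\eta$ extracts the degree-$n$ component of a chain map into $\bbE$: since $\bbE$ is concentrated in degree $n$ with value $\K$, such a chain map is determined by a $\Lambda$-linear map $\mC{n}\AbL\Wd\to\K$ annihilating $\partial_{n+1}$-boundaries, i.e., by an $n$-cocycle in the cochain complex $\Hom_{\Lambda}(\mC{\ast}\AbL\Wd,\K)$.

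To deduce the cohomology isomorphism, I would pass to $\pi_{0}$ on both sides. On the left, $\pi_0\map_{s\M/\tBL}(\Wd,\tEL{\K}{n}) = \HL{n}(\Wd;\K)$ by Definition~\ref{dcoh}; on the right, simplicial homotopies of chain maps between simplicial modules translate under Dold--Kan to chain homotopies, and since $\bbE$ is concentrated in degree $n$, chain-homotopy classes of chain maps $\baC{\ast}\Wd\to\bbE$ equal $n$-cocycles modulo $n$-coboundaries in $\Hom_{\Lambda}(\mC{\ast}\AbL\Wd,\K)$, that is, $H^{n}$ of that Hom complex. Naturality in $\Wd$ is then automatic from the universal-property constructions. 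The main obstacle I expect will be carefully tracking the over-$\tBL$ structure through both the abelianization and the Dold--Kan identifications: the twisting $t$ must be faithfully reflected in the augmentation of $\baC{\ast}\Wd$, and the splitting $s:\tBL\to\tEL{\K}{n}$ must be faithfully reflected in $\bbE$, so that abelianization, Dold--Kan, and the passage to cocycles all commute with slicing over $\tBL$.
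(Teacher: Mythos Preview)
Your approach is essentially the same as the paper's: abelianization adjunction, then Dold--Kan, then identification with the cochain complex. The paper organizes this into three steps, but the content is what you describe.

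One point deserves care. You write that ``one may take $\bbE$ to be concentrated in that single degree.'' The paper does \emph{not} do this: it takes $\bbE$ to be a cofibrant model for $\baC{\ast}E$ sitting in a span of weak equivalences
$\baC{\ast}E \xla{\zeta} \bbE \xra{\eta} \baE{\ast}$,
where $\baE{\ast}$ is the minimal complex with $\baK$ in degree $n$. The reason is that the statement promises honest maps $\zeta$ and $\eta$ on Hom-sets, not merely on homotopy classes. If you set $\bbE=\baE{\ast}$, then defining $\zeta$ would require a chain map $\baE{\ast}\to\baC{\ast}E$, i.e.\ a splitting $\baK\to Z_{n}(\baC{\ast}E)$ of the quotient $Z_{n}\twoheadrightarrow H_{n}\cong\baK$, which need not exist in $\RL$. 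The span through a cofibrant $\bbE$ avoids this. For the cohomology isomorphism itself your shortcut is fine (since $\baC{\ast}\Wd$ is cofibrant, $\Hom(\baC{\ast}\Wd,-)$ sends quasi-isomorphisms to bijections on chain-homotopy classes), but it does not produce the specific $\zeta$ in the statement.

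A second, smaller difference: where you pass to $\pi_{0}$ by invoking the Dold--Kan correspondence of simplicial and chain homotopies, the paper instead computes $H_{0}$ of the internal Hom complex via the path fibration $P\baC{\ast}E\to\baC{\ast}E$, obtaining a right-exact sequence whose cokernel is $\HL{n}$; Step~III then identifies this cokernel with $H^{n}$ of the cochain complex by inspection of the minimal model~\eqref{eqobvious}. Both routes are valid; yours is more direct, the paper's makes the cocycle description in Corollary~\ref{cco} immediate.
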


\begin{proof}
\textbf{Step I.} \ First, we show how \w{\HL{n}(\Wd;\K)} may be
described in terms of a mapping space of chain complexes:

Since \w{E:=\tEL{\K}{n}} can be chosen to be a \emph{strict} abelian
group object in \w{s\M/\tBL} (see \cite[\S 3.14]{BJTurR}), we have a natural
identification:  
$$
\map_{s\M/\tBL}(\Wd,\,E)~\cong~\map_{(s\M/\tBL)\ab}(\AbL\Wd,\,E)~.
$$
\noindent Recall that the Moore chain functor
\w{\mC{\ast}:s(\M/\Lambda)\ab\to\Ch((\M/\Lambda)\ab)} 
induces the Dold-Kan equivalence between simplicial objects and chain
complexes over an abelian category (cf.\ \cite[\S 1]{DolH}).
Composing with the equivalence
\w{(\M/\Lambda)\ab\xra{\approx}\RM{\Lambda}} of
\S \ref{dmod} defines \w[.]{\baC{\ast}:s(\M/\Lambda)\ab\to\Ch(\RL)} 

Applying \w{\baC{\ast}}  to the simplicial $\Lambda$-module
\w{\AbL\Wd} yields \w[,]{\baD{\ast}:=\baC{\ast}\AbL\Wd} with
\begin{myeq}\label{eqchains}
\map_{s\M/\tBL}(\Wd,\,E)~\cong~\map_{\Ch(\RL)}(\baD{\ast},\,\baC{\ast}E)~,
\end{myeq}
\noindent so applying \w{\pi_{0}} yields the required cohomology
group\vsm.

\noindent \textbf{Step II.} \ We now translate this into chain function complexes:

Both sides of \wref{eqchains} are simplicial abelian groups, so we can
replace the right hand side under the Dold-Kan equivalence with the usual mapping
chain complex \w[,]{F_{\ast}:=\uHom(\baD{\ast},\,\baC{\ast}E)} 
where
$$
F_{0}~:=~\Hom_{\Ch(\RL)}(\baD{\ast},\baC{\ast}E)
\hspace*{5mm}\text{and}\hspace*{5mm} F_{1}:=\Hom_{\Ch(\RL)}(\baD{\ast},P\baC{\ast}E)
$$
\noindent for \w{P\baC{\ast}E} the path object on \w[.]{\baC{\ast}E}
Moreover, the differential \w{\partial_{1}:\uHom_{1}\to\uHom_{0}} is
induced by the path fibration  \w[.]{p:P\baC{\ast}E\to\baC{\ast}E}

Since \wref{eqchains} induces an identification:
\w[,]{\pi_{0}\map_{s\M/\tBL}(\Wd,\,E)\cong H_{0}\uHom(\baD{\ast},\baC{\ast}E)}
we have a right-exact sequence:
\begin{myeq}\label{eqsesh}
\Hom_{\Ch(\RL)}(\baD{\ast},P\baC{\ast}E)~\xra{p_{\ast}}~
	\Hom_{\Ch(\RL)}(\baD{\ast},\baC{\ast}E)~
\epic~\HL{n}(\Wd;\K)~\to~0~.
\end{myeq}
\noindent Here \w{p:P\baC{\ast}E\to\baC{\ast}E} has
the obvious minimal model:
\begin{myeq}\label{eqobvious}
\begin{split}
& \xymatrix@R=25pt{
\baP{\ast} \ar[d]^{q}  & = & \dotsc 0 \ar[r] & \baK \ar[r]^{=} \ar[d]^{=} &
\baK \ar[r] \ar[d] & 0\dotsc\\
\baE{\ast}  & =  & \dotsc 0 \ar[r] & \baK \ar[r] & 0 \ar[r] & 0\dotsc}\\ 
& \dim \hspace*{26mm} n+1\hspace*{6mm} n
\hspace*{9mm} n-1\hspace*{4mm} n-2
\end{split}
\end{myeq}
\noindent since 
\w[,]{H_{i}\baC{\ast}E\cong\overline{H_{i}\mC{\ast}E}\cong\overline{\pi_{i}E}}
which is $\baK$ for \w{i=n} and $0$ otherwise (note that 
\w[!).]{\overline{\Lambda}=0} 

This means that we can choose a cofibrant model \w{\bbE} for
\w{\baC{\ast}E} fitting into a span \wh that is, a commuting diagram:
\begin{myeq}\label{eqspan}
\xymatrix@R=25pt{
P\baC{\ast}E \ar[d]^{p}  &&  P\bbE \ar[d]^{p'} \ar[ll]_{P\zeta}\ar[rr]^{P\eta} 
&& \baP{\ast} \ar[d]^{q}\\
\baC{\ast}E  && \bbE \ar[ll]_{\zeta}\ar[rr]^{\eta} && \baE{\ast}~.
}
\end{myeq}
\noindent The horizontal weak equivalences $\zeta$, $\eta$,
\w[,]{P\zeta} and  \w{P\eta} induce a span of quasi-isomorphisms from
\wref{eqsesh} to 
\begin{myeq}\label{eqminsesh}
\Hom_{\Ch(\RL)}(\baD{\ast},\baP{\ast})~\xra{q_{\ast}}~
\Hom_{\Ch(\RL)}(\baD{\ast},\baE{\ast})~\epic~\HL{n}(\Wd;\K)~\to~0~,
\end{myeq}
\noindent which are both the identity on \w[\vsm.]{\HL{n}(\Wd;\K)}

\noindent \textbf{Step III.} \ Finally, we produce a  
commuting diagram:
\mydiagram[\label{eqcochains}]{
\Hom_{\Ch(\RL)}(\baD{\ast},\baP{\ast}) \ar[d]_{\alpha} \ar[r]^{q_{\ast}} &
\Hom_{\Ch(\RL)}(\baD{\ast},\baE{\ast}) \ar[d]_{\beta} \\ 
\buD{n-1} \ar[r]^{\delta^{n-1}=(\partial_{n})^{\ast}} & \mZu{n}\buD{\ast}
}
\noindent where the dual cochain complex \w{\buD{\ast}} is defined by
applying \w{\Hom_{\Ch(\RL)}(-,\baK)} dimensionwise to
\w[,]{\baD{\ast}} and \w{\mZu{n}\buD{\ast}} are its $n$-cocycles:

To describe $\alpha$, note that a chain map $f$ in
\w{\Hom_{\Ch(\RL)}(\baD{\ast},\baP{\ast})} is given by a commuting diagram:
\begin{equation*}
\xymatrix@R=25pt{
\dotsc \baD{n+1} \ar[d] \ar[r]^{\partial_{n+1}} &
\baD{n} \ar[d]^{\psi\circ\partial_{n}}
\ar[r]^{\partial_{n}} &  \baD{n-1} \ar[d]^{\psi} \ar[r]^{\partial_{n-1}} &\dotsc \\
\dotsc 0 \ar[r] & \baK \ar[r]^{=} & \baK\ar[r] &0 \dotsc
}
\end{equation*}
\noindent so that \w[.]{\alpha(f):=\psi} 

Similarly, a chain map $g$ in
\w{\Hom_{\Ch(\RL)}(\baD{\ast},\baE{\ast})} 
is given by a commuting diagram:  
\begin{equation*}
\xymatrix@R=25pt{
\dotsc \baD{n+1} \ar[d] \ar[r]^{\partial_{n+1}} &
\baD{n} \ar[d]^{\phi} \ar[r]^{\partial_{n}} &
\baD{n-1} \ar[d] \ar[r]^{\partial_{n-1}} &\dotsc \\
\dotsc 0 \ar[r] & \baK \ar[r] & 0\ar[r] &\dotsc
}
\end{equation*}
\noindent with \w[;]{\beta(g):=\phi} indeed,
\w{\beta(g)\in\mZu{n}\buD{\ast}} since \w[.]{\phi\circ\partial_{n+1}=0} 

Since \wref{eqcochains} clearly commutes, and the cokernel of the
bottom map is \w[,]{\HL{n}\buD{\ast}} we conclude from \wref{eqminsesh} that: 
$$
\HL{n}(\Wd;\K)~=~H^{n}(\Hom_{\RL}(\baC{\ast}\AbL\Wd,\baK))~\cong~
H^{n}(\Hom_{\M/\Lambda}(\mC{\ast}\AbL\Wd,\K))~,
$$
\noindent again using the equivalence of categories 
\w{\RL\approx(\M/\Lambda)\ab} of \S \ref{dmod}, and the fact that $\K$
is an abelian group object in \w[.]{\M/\Lambda}
\end{proof}

\begin{lemma}\label{lcocw}
Under the assumptions of Proposition \ref{pmoore}, if \w{\Wd} has a
CW-basis \w{(\bW_{n})_{n=0}^{\infty}} (\S \ref{dcw}),
then for each \w{n>0} the natural map \w{\bW_{n}\to\mC{n}\AbL\Wd} induces an
isomorphism 
$$
\Hom_{\Lambda} (\mC{n}\AbL\Wd,\K)~\to~\Hom_{\M}(\bW_{n},\baK)~.
$$
\end{lemma}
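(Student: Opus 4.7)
The plan is to identify $\mC{n}\AbL\Wd$ with $\AbL\bW_{n}$ via Moore's normalization theorem, and then chain together two standard adjunctions to land in $\Hom_\M(\bW_{n},\baK)$.

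First I would exploit the CW decomposition $W_n = \bW_{n} \amalg \latch{n}\Wd$ of Definition~\ref{dcw}. Since $\AbL$ is left adjoint to the forgetful functor from $(\M/\Lambda)\ab$, it preserves coproducts, yielding a direct sum decomposition
\[
\AbL W_n \;\cong\; \AbL \bW_{n} \,\oplus\, \AbL \latch{n}\Wd
\]
in the abelian category $(\M/\Lambda)\ab \approx \RL$. The second summand is precisely the Moore-degenerate subobject $D_n \subseteq \AbL W_n$ (the subobject generated by the images of the degeneracies $s_0,\dotsc,s_{n-1}$), because by \eqref{eqlatch} every copy of $\bG{k}$ in $\latch{n}\Wd$ sits inside $W_n$ via an iterated degeneracy $s_{i_{n-k-1}}\dotsc s_{i_1}$. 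Meanwhile the CW axioms force $d_i|_{\bW_{n}} = 0$ for $1 \leq i \leq n$, so the first summand $\AbL\bW_{n}$ lies inside $\mC{n}\AbL\Wd = \bigcap_{i=1}^n \Ker(d_i)$; this realizes the natural map of the statement as the inclusion of a direct summand.

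Next I would invoke Moore's normalization theorem for the simplicial object $\AbL\Wd$ in the abelian category $\RL$: the composite
\[
\mC{n}\AbL\Wd \;\hookrightarrow\; \AbL W_n \;\twoheadrightarrow\; \AbL W_n / D_n
\]
is an isomorphism. Combined with the direct sum splitting above, $\AbL W_n / D_n \cong \AbL\bW_{n}$, so the inclusion $\AbL\bW_{n} \hookrightarrow \mC{n}\AbL\Wd$ is in fact an isomorphism.

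Finally, applying $\Hom_\Lambda(-,\K)$ and using the defining adjunction of $\AbL$ together with the equivalence $(\M/\Lambda)\ab \approx \RL$ from \S\ref{dmod}, I obtain
\[
\Hom_\Lambda(\mC{n}\AbL\Wd,\K) \;\cong\; \Hom_\Lambda(\AbL\bW_{n},\K) \;\cong\; \Hom_{\M/\Lambda}(\bW_{n},\K).
\]
The rightmost group is $\Hom_\M(\bW_{n},\baK)$ provided the structural map $\bW_{n} \to \Lambda$ inherited from $\Wd \to \co{\Lambda}$ is zero; for $n > 0$ this holds because the augmentation commutes with $d_1$ and $d_1|_{\bW_{n}} = 0$ by the CW axioms, so $\bW_{n} \to W_n \to \Lambda$ factors through $\bW_{n} \to W_{n-1} \to \Lambda$ via the zero map. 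The main obstacle is the identification of the Moore-degenerate subobject $D_n$ with $\AbL\latch{n}\Wd$; once this is in hand, Moore normalization does the essential work, and the remaining steps are routine adjunction bookkeeping.
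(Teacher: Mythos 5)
Your proof follows essentially the same route as the paper: decompose $W_n$ via the CW basis, use the fact that $\AbL$ is a left adjoint together with Dold--Moore normalization to identify $\mC{n}\AbL\Wd\cong\Ab\bW_{n}$, and then finish by adjunction bookkeeping (using that the structure map $\bW_{n}\to\Lambda$ vanishes). The one place the paper works a bit harder is in verifying $\AbL\latch{n}\Wd\cong\latch{n}(\AbL\Wd)$: it splits $\latch{n}\Wd\cong\latch{n}'\Wd\amalg W_0$ precisely to track the structure map over $\Lambda$ (zero on $\bW_{n}\amalg\latch{n}'\Wd$, the augmentation $\var$ on the $W_0$ summand), which is what makes abelianization commute with the latching construction --- your assertion that $\AbL\latch{n}\Wd$ coincides with the degenerate subobject is correct but asserted rather than checked at this point.
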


\begin{proof}
Let \w{s_{I^{(n)}}:W_{0}\to W_{n}} be the unique iterated degeneracy
map, making \w{W_{0}} a coproduct summand in 
\w[,]{\latch{n}\Wd\cong\latch{n}'\Wd\amalg W_{0}} where by \wref[,]{eqlatch}
\w{\latch{n}'\Wd} is a coproduct of the images under various iterated
degeneracies of (copies of) the basis objects \w[.]{(\bW_{i})_{i=1}^{n-1}}

 Since the structure map \w{W_{n}\to\Lambda}
for \w{\Wd\to\Lambda} factors though the retract \w{W_{n}\to W_{0}}
for \w[,]{s_{I^{(n)}}} in fact \w{W_{n}\to\Lambda} is a coproduct in
\w{\M/\Lambda} of \w{0:\bW_{n}\amalg\latch{n}'\Wd\to\Lambda} and
\w[,]{\var:W_{0}\to\Lambda} where the first summand further splits as
a coporoduct of objects of the form \w[.]{0:\bW_{i}\to\Lambda}

Because the abelianization functor
\w{\AbL:\M/\Lambda\to(\M/\Lambda)\ab} is a left adjoint, it commutes
with coproducts, and when applied to \w{0:A\to\Lambda} yields 
\w[,]{0:\Ab A\to\Lambda} where \w{\Ab:\M\to \M\ab} is the usual
abelianization of \Tal[s.] 
\begin{equation*}
\begin{split}
\AbL W_{n}=&\AbL(\bW_{n}\amalg\latch{n}'\Wd)\amalg_{\Lambda}\AbL W_{0}\cong
(\Ab\bW_{n}\amalg \Ab\latch{n}'(\Wd))\amalg_{\Lambda}\AbL W_{0}\\
\cong&~(\Ab\bW_{n}\amalg \latch{n}'(\Ab\Wd))\amalg_{\Lambda}\AbL W_{0}~\cong~
\Ab\bW_{n}~\amalg_{\Lambda}~\latch{n}(\AbL\Wd)
\end{split}
\end{equation*}
\noindent where the last coproduct is actually the direct sum
\begin{myeq}\label{eqnorm}
\Ab\bW_{n}~\oplus~\latch{n}(\AbL\Wd)
\end{myeq}
\noindent  in the abelian category \w[,]{(\M/\Lambda)\ab} and all the
abelianizations are  applied dimensionwise (cf.\ \S \ref{rabel}).

This implies that 
\begin{myeq}\label{eqnormbase}
\mC{n}\AbL\Wd~\cong~\norm{n}(\AbL\Wd)~=~\AbL\bW_{n}=\Ab\bW_{n}~,
\end{myeq}
\noindent since over the abelian category \w{(\M/\Lambda)\ab} the
Moore chains \w{\mC{\ast}} can be identified with the normalized chains
\w[,]{\norm{\ast}:s(\M/\Lambda)\ab\to\Ch((\M/\Lambda)\ab)} where by
definition, \w{\norm{n}\Ad=A_{n}/\latch{n}\Ad} for 
\w{\Ad\in s(\M/\Lambda)\ab} (cf.\ \wref[]{eqnorm} and \cite[(1.12)]{DolH}).

The abelianization \w{\AbL\bW_{n}} of \w[,]{0:\bW_{n}\to\Lambda} is just
\w[,]{0:\Ab\bW_{n}\to\Lambda} where \w{\Ab\bW_{n}} has a trivial
restricted $\Lambda$-module structure (cf.\ \S \ref{dmod}).
Therefore maps into \w{(p:\K\to\Lambda)} (over $\Lambda$) factor
through \w[.]{\Ker p=:\baK}  Applying \w{\Hom_{\Lambda}(-,\K)} to
\wref{eqnormbase} thus yields: 
\begin{equation*}
\begin{split}
\Hom_{\Lambda}(\mC{n}\AbL\Wd,\K)~\cong&~\Hom_{\Lambda}(\AbL\bW_{n},\K)~
\cong~\Hom_{\RL}(\Ab\bW_{n},\baK)\\
~\cong&~\Hom_{\M\ab}(\Ab\bW_{n},\baK)~\cong~\Hom_{\M}(\bW_{n},\baK)~,
\end{split}
\end{equation*}
\noindent as required.
\end{proof}

Combining Lemma \ref{lcocw} and Proposition \ref{pmoore}, we have:

%
%
\begin{cor}\label{cco}
For \w{\Wd} as above, every cohomology class in \w{\HL{n}(\Wd;\K)}
is determined by a map of \Tal[s] \w[.]{\phi:\bW_{n}\to\baK} 
\end{cor}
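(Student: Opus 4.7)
The plan is to obtain this corollary as a direct combination of the two preceding results, with no further input needed beyond unwinding the identifications. First, I would invoke Proposition \ref{pmoore} to replace the abstract cohomology group by cohomology of an explicit cochain complex:
$$
\HL{n}(\Wd;\K)~\cong~H^{n}\Hom_{\Lambda}(\mC{\ast}\AbL\Wd,\K)~.
$$
In particular, every element of $\HL{n}(\Wd;\K)$ is represented by an $n$-cocycle in $\Hom_{\Lambda}(\mC{n}\AbL\Wd,\K)$.

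Next, I would apply Lemma \ref{lcocw}, which provides the identification
$$
\Hom_{\Lambda}(\mC{n}\AbL\Wd,\K)~\cong~\Hom_{\M}(\bW_{n},\baK)
$$
induced by the natural map $\bW_{n}\to\mC{n}\AbL\Wd$. Composing the two isomorphisms, every cohomology class in $\HL{n}(\Wd;\K)$ is the class of a cocycle, and that cocycle corresponds under the lemma's isomorphism to a \Tal map $\phi:\bW_{n}\to\baK$, as required.

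There is essentially no obstacle here: Proposition \ref{pmoore} does the cochain-theoretic translation, and Lemma \ref{lcocw} does the degreewise identification of cochains with morphisms out of the CW basis object, so the only thing to note explicitly is that the representing map $\phi$ is not claimed to be unique (different cocycles in the same cohomology class will yield different $\phi$, and not every $\phi$ need be a cocycle on the nose). Thus the statement should be read as ``each class has some representing $\phi$,'' and this follows immediately from surjectivity of the cocycle representation combined with the isomorphism of Lemma \ref{lcocw}.
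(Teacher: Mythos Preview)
Your proposal is correct and matches the paper's own approach: the paper simply records the corollary as an immediate consequence of combining Lemma \ref{lcocw} with Proposition \ref{pmoore}, exactly as you do. Your added remark about non-uniqueness of the representing $\phi$ is a reasonable clarification but is not part of the paper's argument.
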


Notice the map $\phi:\bW_{n} \to \baK$ represents zero in
\w{\HL{n}(\Wd;\K)} if and only if there is a commuting diagram in $\M$:
$$
\xymatrix@R=25pt{
\bW_{n} \ar[d]_{\phi} \ar[rr]^{\bdz{\bW_{n}}} && W_{n-1}
\ar[d]_{\psi}\ar[rd] & \\
\baK \ar[rr]^{i} && \K \ar[r]^{p} & \Lambda~.
}
$$

\begin{mysubsection}{Description of $k$-invariants}\label{sdki}
Let \w{\Kd\in s\M} for \w{\M=\TAlg} as above, with \w{\tPo{n}\Kd} its
$n$-th Postnikov section. Recall from \wref{eqkinv}
that the functorial $n$-th $k$ invariant
\w{k_{n}\in\HAQ{n+2}(\tPo{n}\Kd;\,\pi_{n+1}\Kd)} fits into a homotopy
pullback square:
\mydiagram[\label{eqkinvpa}]{
\ar @{} [dr] |<<<<<<{\framebox{\scriptsize{PB}}}
\tPo{n+1}\Kd \ar[r]^{p\q{n+1}} \ar[d] & \tPo{n}\Kd \ar[d]^{k_{n}}\\
\tBL \ar[r] & \tEL{\pi_{n+1}\Kd}{n+2}
}
\noindent for \w[.]{\Lambda =\pi_{0}\Kd}  This is constructed as in
\cite[\S 6]{BDGoeR} by first taking the homotopy pushout $Z$ of the
upper left corner of \wref[,]{eqkinvpa} and then noting that
\w[.]{\tPo{n+2}Z\simeq\tEL{\pi_{n+1}\Kd}{n+2}}

We can represent  \w{k_{n}} for \w{\Kd}  by the map in \w[:]{\M/\Lambda}
\begin{myeq}\label{eqbcocycle}
b:(\tPo{n}\Kd)_{n+2}\to\pi_{n+1}\Kd
\end{myeq}
\noindent which sends any \wwb{n+2}simplex
\w{\sigma\in(\csk{n+1}\Kd)_{n+2}} to the class in
\w{\pi_{n+1}\Kd} represented by the matching collection of
\wwb{n+1}faces
$$
(d^{n+2}_{0}\sigma,\dotsc,d^{n+2}_{n+2}\sigma)~\subseteq~
(\csk{n+1}\Kd)_{n+1}=\K_{n+1}~.
$$
\noindent Note that this collection need no longer have an \wwb{n+2}dimensional
fill-in in \w[,]{\Kd} but it does represent a map from the boundary of an
\wwb{n+2}simplex into \w[,]{\Kd} and so an element of
\w[.]{\pi_{n+1}\Kd}

In forming the homotopy pushout, we actually replace the map
\w{p\q{n+1}} which is the identity up through dimension \w[,]{n+1}
by a cofibration, by adding a $k$-simplex to \w{\Po{n}\Kd} for each
non-unique filler of a matching collection of \wwb{n+1}faces
in \w{\Po{n+1}\Kd} for \w[.]{k \geq n+2} However, any matching
collection in \w{\Po{n+1}\Kd} with a filler represents zero in
\w{\pi_{n+1}\Kd} so this does not affect \w[,]{[b]} while the
above description remains valid otherwise.

Since \w{\pi_{n+1}\Kd} is an abelian group object in
\w{\M/\Lambda} for \w[,]{\Lambda=\pi_{0}\Kd} the map $b$ factors
through the abelianization  \w[,]{\AbL(\tPo{n}\Kd)_{n+2}} so we can
  apply Proposition \ref{pmoore} to produce a cohomology class.
\end{mysubsection}

%
%
\sect{Rectification of diagrams and higher homotopy operations}
\label{chaho}

Higher homotopy or cohomology operations have been studied extensively
since they were first discovered over fifty years ago, but there is
still no completely satisfactory theory that adequately covers all
known examples and explains their properties. In the approach we take
here,  based on that of \cite{BMarkH} (as modified in \cite{BChachP}),
they appear as ``geometric'' obstructions to realizing
homotopy-commutative diagrams. 

\begin{mysubsect}{The rectification problem}\label{srp}

Let $\M$ be a model category, and $\D$ a small category. We start with
a functor \w[,]{\tX:\D\to\ho\M} which we would like to \emph{rectify}
\wh that is, lift to a functor \w[.]{X:\D\to\M} 

By definition, we can choose a \emph{function} \w{X\arr:\Arr\D\to\Arr\M} 
which assigns to each arrow \w{\phi:a\to b} in $\D$ a map
\w{X\arr(\phi):\tX(a)\to\tX(b)} representing \w[.]{\tX(\phi)}
Moreover, for each two composable arrows \w{a\xra{\phi}b\xra{\psi}c}
in $\D$ we can choose a homotopy
\w[.]{H_{(\psi,\phi)}:X\arr(\psi\circ\psi)\sim X\arr(\psi)\circ
  X\arr(\phi)} The idea is that if we can choose these homotopies
\emph{compatibly}, in an appropriate sense, then the diagram $\tX$ can
be rectified; and that higher homotopy operations arise as the
obstructions to making such a choice. 

To make this precise, we need suitable function complexes for $\M$, in
which to house the higher homotopies: we work here
with the more familiar simplicial enrichment of $\M$ (although the
cubical version is more economical) \wh more precisely, with the
pointed version, enriched in \w[.]{\Sa} 

In fact, we only need the mapping spaces between the objects of $\M$
which are in the image of $\tX$, so let \w{\MO} denote the
\ww{\SaO}-category with object set \w{\OO:=\Obj\D} and
\w[.]{\map_{\MO}(a,b):=\map_{\M}(\tX a,\tX b)} We always assume 
\w{\tX a} is cofibrant and \w{\tX b} is fibrant. Thus the 
\emph{homotopy category} \w{\pi_{0}\MO} of \w{\MO} may be thought of
as a subcategory of the original \w[,]{\ho\M} and we can replace $\tX$ by a
functor \w[.]{\bX:\D\to\pi_{0}\MO} If we think of $\D$ as the constant
\ww{\SaO}-category \w[,]{\co{\D}} rectifying $\tX$ is equivalent to
lifting $\bX$ to an \ww{\SaO}-functor \w[.]{\hX{}:\co{\D}\to\MO}

In \cite[\S 1]{DKanS}, Dwyer and Kan define a simplicial model
category structure on \w[,]{\SOC} also valid for \w{\SaOC} (cf.\
\cite[Prop.~1.1.8]{HovM}), in which the fibrations and weak
equivalence are defined objectwise (that is, on each mapping space
\w[).]{\Xd(a,b)} Thus the above lifting problem can be stated in a
homotopically meaningful way if we use a cofibrant replacement for
\w[,]{\co{\D}} and require \w{\MO} to be fibrant (which just means
that each mapping space of \w{\MO} is a Kan complex).

The cofibrant objects in \w{\SaOC} are not easy to describe, in
general. However, a canonical cofibrant replacement for \w{\co{\D}} is
given by the simplicial category \w{\Fs\D} obtained by iterating the
comonad \w[,]{FU:\OC\to\OC} where the \emph{free category} functor
\w{F:\DiGa\to\Cat} is left adjoint to the forgetful functor
\w{U:\Cat\to\DiGa} to the category of directed pointed graphs.

A lift of \w{\bX:\D\to\pi_{0}\MO} to an \ww{\SaO}-functor
\w{\hX{}:\Fs\D\to\MO} is called an \emph{$\infty$-homotopy commuting}
version of the original $\tX$, and by \cite[Theorem~IV.4.37]{BVogHI} or
\cite[Theorem~2.4]{DKSmH}, its existence is equivalent (in a
homotopy-invariant sense) to solving the original rectification
problem, in the case where \w{\M=\Ta} or \w[.]{\Sa}
\end{mysubsect}

\begin{mysubsection}{The inductive process of rectification}
\label{sipr}
Following \cite{DKSmH}, we wish to construct such an $\infty$-homotopy
commuting lift $\hX{}$ of $\tX$ by induction over the Postnikov
sections (applied to each simplicial mapping space of the target
category $\M$). For our purposes we need the relative version, in
which $\C$ is a subcategory of the given indexing category $\D$, and
\w{\tX\rest{\C}} has already been rectified. 

Let \w{\OO=\Obj\C} and \w[,]{\Op:=\Obj\D} and note that the inclusion 
\w{i:\C\hra\D} induces a cofibration \w{\Fs i:\Fs\C\hra\Fs\D} in \w{\SaOpC}
(where we think of an \ww{\SaO}-category as an \ww{\SaOp}-category by
extending trivially). Thus the following pushout in \w[:]{\SaOpC}  
\mydiagram[\label{eqhpo}]{
\Fs\C \ar[d]_{r} \ar[r]^{\Fs i} &\Fs\D \ar[d]^{s} \\
\co{\C} \ar[r] & \Fs(\D,\C)
}
\noindent is in fact a homotopy pushout (and the vertical maps are
weak equivalences).

We begin the induction with an \ww{\SaO}-functor \w[,]{X:\co{\C}\to\MO}
(a rectification of \w[),]{\tX\rest{\C}} and a compatible
\ww{\SaOp}-functor \w{X_{0}:\Fs\D\to \Po{0}\MOp} lifting $\tX$, which
together induce \w[.]{\hX{0}:\Fs(\D,\C)\to \Po{0}\MOp} 

Now assume by induction on \w{n>0} that we can lift \w{\hX{0}}
to \w[,]{\hX{n-1}:\Fs(\D,\C)\to \Po{n-1}\MOp} making the following
diagram commute:
\mydiagram[\label{eqdks}]{
\Fs\C \ar[d]_{r}^{\simeq} \ar[r]^{\Fs i} &\Fs\D \ar[d]^{s}_{\simeq} & \\
\co{\C} \ar[r] \ar[d]_{X} & \Fs(\D,\C)\ar[rd]^{\hX{n-1}} & \\
\MO \ar[r]_{j} & \MOp \ar[r]_{q^{n}} & \Po{n-1}\MOp
}
\noindent and our goal is to identify the obstruction to lifting 
\w{\hX{n-1}} to \w[.]{\hX{n}:\Fs(\D,\C)\to \Po{n}\MOp}
\end{mysubsection}

\begin{mysubsection}{The Dwyer-Kan-Smith obstruction theory}
\label{sdks}
Although \w{\SaOC} (the category of simplicially enriched categories
with fixed object set $\OO$) is not quite a resolution model category
as defined here, Dwyer and Kan have shown that it has a notion of
\ww{\SaO}-cohomology, represented by Eilenberg-Mac~Lane objects 
\w{\E_{\D}(\K,n)} in \w[,]{\SaOC} defined much as in  \S \ref{dcoh}
(if we replace \PAa[s] by \ww{\GO}-categories as the target of
\w{\hpi} throughout), including a relative version  \w[.]{\HSO{\ast}(\D,\C;\K)}

We can thus use \w{\hX{n-1}} to pull back the \wwb{n-1}st $k$-invariant
\w{k_{n-1}:\Po{n-1}\MOp\to\E_{\D}(\pi_{n}\MOp,n+1)} for \w[,]{\MOp} as in
\wref[,]{eqkinv} to a map
\w[,]{h_{n-1}:=k_{n-1}\circ\hX{n-1}:\Fs(\D,\C)\to\E_{\G}(\pi_{n}\MOp,n+1)}
and deduce that the map \w{\hX{n-1}} lifts to \w{\hX{n}} if and only
if \w{[h_{n-1}]} vanishes in \w{\HSO{n+1}(\D,\C;\pi_{n}\MOp)} (see
\cite[Proposition 4.8]{DKSmH}). 
\end{mysubsection}

Our goal is to replace these \ww{\SaO}-cohomology obstructions by
geometrically defined higher homotopy operations, which we can then
identify in cases of interest with the Andr\'{e}-Quillen cohomology
obstructions of \cite{BDGoeR}. For this purpose, we must restrict
attention to a more limited class of indexing categories $\D$, defined
as follows: 

\begin{defn}\label{dlat}
A finite non-unital category $\Gamma$ will be called a \emph{lattice}
if it has no self-maps and is equipped with a \emph{(weakly) initial}
object \w{\vi} and a  \emph{(weakly) final} object \w[,]{\vf} such that
there is a unique \w[.]{\phi_{\max}:\vi\to\vf} 
We say that $\Gamma$ is \emph{pointed} if it is enriched in pointed
sets. In this case necessarily \w[.]{\phi_{\max}=\ast}

A composable sequence of $n$ arrows in $\Gamma$ will be called an
$n$-\emph{chain}. For any finite category $\Gamma$, the maximal
occuring $n$ is its \emph{length}. When $\Gamma$ is a lattice, this is
necessarily for a chain from \w{\vi} to \w[,]{\vf} factorizing
\w[.]{\phi_{\max}}
\end{defn}

\begin{example}\label{egtoda}
The simplest pointed lattice of interest to us is the \emph{Toda lattice}
of length $3$:
$$
\xymatrix@R=25pt{
\vi \ar[r]^{f} \ar@/^{1.5pc}/[rr]^{\ast} & u \ar[r]^{g}
\ar@/_{1pc}/[rr]_{\ast} & w \ar[r]^{h} & \vf~.
}
$$
\end{example}

\begin{mysubsection}{Higher homotopy operations}\label{shho}
Now let \w{\D=\Gp} be a (pointed) lattice, with objects
\w[,]{\Op:=\Obj\Gp} and \w{\C=\Gamma} a subcategory of \w{\Gp} with
object set \w[.]{\OO\subseteq\Op} We assume given a (pointed) diagram
\w{\tX:\Gp\to\ho\M} for an \ww{\Sa}-category $\M$, which we wish to 
rectify as above. In the cases of interest to us
\w{\Op\setminus\OO} will consist of a single object (either \w{\vi} or
\w[),]{\vf} and the maps in \w{\Gp} which are not in $\Gamma$ will be
(essentially) only zero maps. 

In the approach of \cite{BMarkH,BChachP}, we try to extend an
\ww{\SaO}-functor \w{X:\co{\C}\to\MO} to \w{\hX{}:\Fs(\GpG)\to\MOp} 
(see \S \ref{srp}), by induction over the skeleta of \w[.]{\Fs(\GpG)}
When \w{\MOp} is fibrant, this is essentially the same as the
induction in \S \ref{sipr}, since the $k$-coskeleton functor is a
\wwb{k-1}Postnikov section for a fibrant \ww{\SaO}-category. The
obstruction to extending to the \wwb{k+1}stage thus lies in a set of
relative homotopy classes of \ww{\SaOp}-functors  from
\w{(\sk{k+1}\Fs(\GpG),\sk{k}\Fs(\GpG))} to \w[,]{\MOp} which are in
general hard to describe. However, if \w{n+1} is the length of
\w[,]{\Gp} then \w[,]{\Fs\Gp} and thus \w[,]{\Fs(\GpG)} is
$n$-dimensional, and in this case the last obstruction is adjoint to a
wedge of maps \w{\Sigma^{n-1}\tX(\vi)\to\tX(\vf)} in $\M$ (see
\cite[Proposition 3.21]{BJTurH}). We then define the associated $n$-th
order higher homotopy operation associated to $\tX$ to be the
collection of elements: 
$$
\llrr{\tX}~\subseteq~[\Sigma^{n-1}\tX(\vi),\,~\tX(\vf)]_{\ho\M}~,
$$
\noindent obtained in this way from all possible extensions of $X$ to 
\w[.]{\sk{n-1}\Fs(\GpG)} Each such element is called a called a
\emph{value} of \w[.]{\llrr{\tX}}

 In the cases of interest to us here, $\M$ is equipped with a
 collection $\A$ of homotopy cogroup objects and \w{\tX(\vi)} is in
 \w{\MA} (\S \ref{dssmc}) so \w{\llrr{\tX}} takes 
values in the $\A$-homotopy groups of \w[.]{\tX(\vf)}

In \cite[Theorem~4.14]{BJTurH} the set \w{\llrr{\tX}}
was shown to be equivalent (for \w{\D=\Gp} a lattice) to
the set of \ww{\SaO}-cohomology classes appearing as the final
obstructions to rectification in \cite[Theorem~2.4]{DKSmH} \wh with a
particular cohomology class associated to each value of
\w{\llrr{\tX}} in such a way that they vanish simultaneously.

Note that the obstruction theory of \cite{BMarkH,BChachP} is
actually defined for $\M$ enriched in (pointed) \emph{cubical} sets 
(rather than in \w[),]{\Sa} using Boardman and Vogt's $W$-construction 
instead of \w{\Fs\D} (which is in fact a canonical triangulation thereof).
\end{mysubsection}

%
%
\sect{The Andr\'{e}-Quillen Cohomology Obstructions}
\label{caqco}

In this section, we study the Andr\'{e}-Quillen cohomological
existence obstructions to realizing an abstract \PAa $\Lambda$, with
a view to comparing them to the higher homotopy operation
obstructions described in the next section.

\begin{mysubsection}{The general setting}\label{assumptions}
Given an \ww{E^{2}}-model category $\M$ (\S \ref{setmc}) with a
 collection of spherical objects $\A$, and an abstract \PAa $\Lambda$,
 one can try to \emph{realize} it by finding an object \w{X\in\M} with
 \w[.]{\piA X\cong\Lambda} 

For this purpose, we try to construct a cofibrant object
\w{\Vd} in \w{s\M} realizing a free simplicial resolution
\w{\Gd\to\Lambda} in \w[,]{s\PAAlg} (i.e. \w[)]{\piA\Vd = \Gd}
and \wref{eqrealwe} then implies that one can choose \w[.]{X=J\Vd}  On
the other hand, we know simplicial resolutions exist in both
\w{s\PAAlg} and \w[,]{s\M} and it follows from 
\cite[Proposition 3.13]{BlaCW}) that if $\Lambda$ is  
realizable, any choice of \w{\Gd} must also be realizable in the sense that
there is a CW object \w{\Vd\in s\M} with \w[.]{\piA\Vd\cong\Gd} Thus, the
realization problem for $\Lambda$ is reduced to one of realizing
CW objects in \w[.]{s\PAAlg}

Therefore, assume given a free simplicial \PAa resolution \w{\Gd} of a \PAa\
$\Lambda$ with CW-basis \w{\{\bG{n}\}_{n=0}^{\infty}} and 
\wwb{n+2}attaching map 
\w[.]{\bdz{\bG{n+2}}:\bG{n+2} \to \mZ{n+1}\Gd \subset \mC{n+1}\Gd} 	
In our inductive approach, we also assume given
an \wwb{n+1}truncated realization \w{\qVd{n+1}} of \w[:]{\Gd} that is,
the \wwb{n+1}truncations \w{\tau_{n+1}\piA\qVd{n+1}} and
\w{\tau_{n+1}\Gd} are isomorphic. We also choose a map  	
\w{\bdz{\bV{n+2}}:\bV{n+2}\to \mC{n+1}\qVd{n+1}} realizing the
attaching map \w[.]{\bdz{\bG{n+2}}:\bG{n+2} \to \mC{n+1}\Gd}
	 	
We may assume \w{\qVd{n+1}} is Reedy fibrant (as a truncated
simplicial object).  If we apply the \wwb{n-1}Postnikov section
functor to \w[,]{\qVd{n+1}} we obtain a (full) simplicial object
\w{\qWd{n}:=\csk{n}\qVd{n+1}} (cf.\ \S \ref{rsmc}). 

To start the induction, note that each basis object \w[,]{\bG{n}} and
thus each \w[,]{G_{n}} is free, so if we choose any realization for
\w[,]{\bdz{\bG{1}}:\bG{1}\to G_{0}} we obtain a strict $1$-truncated
realization \w{\qVd{1}} of \w[.]{\Gd} We can further choose a
realization \w{\bdz{\bV{2}}:\bV{2}\to C_{1}\qVd{1}} of
\w[,]{\bdz{\bG{2}}:\bG{2}\to Z_{1}\Gd\subseteq C_{1}\Gd} again by
\wref[.]{eqcommmoor} Finally, we can use \cite[Corollary 4.2]{BJTurR}
to guarantee that \w{d_{0}^{1}\circ\bdz{\bV{2}}=0} on the nose, thus
extending \w{\qVd{1}} to \w[,]{\qVd{2}} so there are no obstructions
for \w[.]{n=0} 

At the $n$-th stage, we deduce from \wref{eqspiral} that:
\begin{myeq}[\label{eqnqnat}]
\pinat{k}{\qWd{n}}\cong\begin{cases}
		\Omega^{k}\Lambda &~~\text{for \ } 0\leq k<n,\\
		 0     &~~\text{otherwise}\end{cases}
\end{myeq}
\noindent as well as:
\begin{myeq}[\label{eqnqpia}]
\pi_{k}\piA\qWd{n}\cong\begin{cases}
		\Lambda  &~~\text{for \ } k=0,\\
		\Omega^{n}\Lambda &~~\text{for \ } k=n+1,\\
		 0       &~~\text{otherwise}~.\end{cases}
\end{myeq}
\noindent Any simplicial object over $\M$ satisfying \wref{eqnqnat}
and \wref{eqnqpia} is called an \wwb{n-1}\emph{\sps}\ for $\Lambda$.

\end{mysubsection}

\begin{mysubsection}{The cohomology existence obstruction}
\label{sceo}
There are several variant descriptions of the Andr\'{e}-Quillen
cohomological existence obstructions, given by 
\cite[Theorem 4.15]{BlaAI}, \cite[\S 9]{BDGoeR}, and 
\cite[Theorem 6.4(b)]{BJTurR}, respectively. We recall the third version:

Under the setting of \S \ref{assumptions}, \w{\qWd{n}} need not extend
to a full resolution \w[,]{\Wd} with \w[,]{\pi_{0}\piA\Wd\cong\Lambda} 
and \w{\pi_{i}\piA\Wd=0} for \w[.]{i>0}  If it does so extend, then
the structure map \w{r\q{n-1}:\Wd\to\Po{n-1}\Wd=\qWd{n}} induces a map
of simplicial \PAa[s] \w{r\q{n-1}_{\#}:\tBL\to\piA\qWd{n}}
over $\Lambda$, which serves as a zero section, and implies that 
\w[.]{\piA\qWd{n}\cong \tEL{\Omega^{n}\Lambda}{n+1}}
In other words, if  \w{\qWd{n}}  extends to a full resolution, then
the simplicial \PAa \w{\piA\qWd{n}} has trivial
$k$-invariants.  Thus, we distinguish those \wwb{n-1}\sps s where 
the $n$-th $k$-invariant for \w{\piA\qWd{n}} vanishes by calling
them \wwb{n-1}\emph{\qps s}.

It turns out that if \w{\qWd{n}} is an \wwb{n-1}\sps, the $n$-th
$k$-invariant for the simplicial \PAa\ \w[,]{\piA\qWd{n}} which we
denote by:   
\begin{myeq}\label{eqaqcohcl}
\beta_{n}~\in~\HAQ{n+2}(\Lambda,\Omega^{n}\Lambda)~,
\end{myeq}
\noindent is precisely the obstruction to extending \w{\qWd{n}} to an
$n$-\sps\ \w{\qWd{n+1}} with \w{\csk{n}\qWd{n+1}\simeq\qWd{n}}
(see \cite[\S 9]{BDGoeR}). 
In other words, an \wwb{n-1}\sps\ extends one step further toward a
full resolution if and only if it  is an \wwb{n-1}\qps. Here we use
\wref{eqnqpia} to identify \w{\Po{n}\piA\qWd{n}} with \w{\tBL} (a
simplicial \PAa classifying object for $\Lambda$).  

Of course, there may be many ways to choose the extension
\w{\qWd{n+1}} once it is known to exist; these are classified by 
difference obstructions (see Section \ref{cdo} below). 
\end{mysubsection}

\begin{mysubsection}{Representing the $k$-invariant explicitly}
\label{speckinv}
Suppose that the map \w{\bdz{\bV{n+2}}:\bV{n+2} \to \mC{n+1}\qVd{n+1}}
actually lands in the \wwb{n+1}cycle object \w[.]{\mZ{n+1}\qVd{n+1}}
This would yield an extension of \w{\qVd{n+1}} to an \wwb{n+2}truncated
realization of \w{\Gd}  (cf.\ \S \ref{stepred}).  Since
\w{d_{i}\bdz{\bV{n+2}}=0} for \w{i\geq 1} by \wref[,]{eqmoor} the
composite map \w{d_{0} \bdz{\bV{n+2}}} is truly the only obstruction to extending
the realization. Moreover, since at the \PAa level \w{\bdz{\bG{n+2}}} lands
in the cycles, rather than just the chains, from \wref{eqcommmoor} we
know that  \w[.]{\left[d_{0} \bdz{\bV{n+2}}\right]=0} Thus the real question is
whether by varying \w{\bdz{\bV{n+2}}:\bV{n+2}\to\mC{n+1}\qVd{n+1}} within
its homotopy class (determined by \w[),]{\bdz{\bG{n+2}}} we can force the
null homotopic composite \w{d_{0} \bdz{\bV{n+2}}} to be strictly zero.
This sets the stage for constructing a higher homotopy operation in 
Section \ref{chhoeo}. First, we show how the cohomology class
\w{\beta_{n}} may be described in terms of \w[:]{d_{0}\bdz{\bV{n+2}}}
\end{mysubsection}

%
%
\begin{prop}\label{pexistobst}
Under the assumptions of \ref{assumptions} for \w[,]{n \geq 1}
the obstruction class \w{\beta_{n}} of \eqref{eqaqcohcl} is represented
in the sense of Corollary \ref{cco} by the map
\w{\bG{n+2} \to \pinat{n}\qVd{n+1}} induced by \w[.]{d^{V_{n+1}}_{0}
  \circ \bdz{\bV{n+2}}:\bV{n+2}\to\mZ{n}\qVd{n+1}}
\end{prop}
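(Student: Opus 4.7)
The plan is to combine the general cocycle description of the $k$-invariant from \S\ref{sdki} with the CW-basis identification of cohomology classes from Corollary \ref{cco}, translating the algebraic data in $\pi_{n+1}\piA\qWd{n}$ to the geometric datum $d_0^{V_{n+1}}\circ\bdz{\bV{n+2}}$ via the Hurewicz diagram \wref{eqhurewicz}.

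First, I apply formula \wref{eqbcocycle} to $\Kd:=\piA\qWd{n}$: by \wref{eqnqpia}, the only nonzero homotopy groups of $\Kd$ are $\pi_0\Kd\cong\Lambda$ and $\pi_{n+1}\Kd\cong\Omega^n\Lambda$, so $\beta_n$ is represented by a cocycle $b$ on $\tPo{n}\Kd\simeq\tBL$ that sends each $(n+2)$-simplex to the class of its matching family of $(n+1)$-faces. Using a cofibrant replacement $\Gd\xra{\sim}\tBL$, this cocycle pulls back to a cocycle on $\Gd$, whose cohomology class, by Corollary \ref{cco}, is completely determined by its restriction to the CW-basis object $\bG{n+2}$.

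For a generator $\sigma\in\bG{n+2}$, property (c) of Definition \ref{dcw} forces $d_i\sigma=0$ for $i\geq 1$, so the matching family collapses to $(\bdz{\bG{n+2}}(\sigma),0,\dots,0)$, and the value of the cocycle at $\sigma$ is precisely the class of $\bdz{\bG{n+2}}(\sigma)\in\mZ{n+1}\Gd$ inside $\pi_{n+1}\piA\qWd{n}\cong\Omega^n\Lambda$. It remains to identify this algebraic class with the geometric one. The attaching map $\bdz{\bV{n+2}}:\bV{n+2}\to\mC{n+1}\qVd{n+1}$ realizes $\bdz{\bG{n+2}}$, so by the isomorphism $\iota_\star$ of \wref{eqcommmoor} the composite $\piA(d_0^{V_{n+1}}\circ\bdz{\bV{n+2}})$ realizes $d_0\bdz{\bG{n+2}}$ at the \PAa level. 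Chasing diagram \wref{eqhurewicz} for $\qVd{n+1}$, together with the spiral exact sequence \wref{eqspiral} and the coskeleton comparison $\qWd{n}=\csk{n}\qVd{n+1}$ identifying $\pinat{n}\qVd{n+1}$ with $\pi_{n+1}\piA\qWd{n}$, then shows that the $\pinat{n}$-class of the geometric composite $d_0^{V_{n+1}}\circ\bdz{\bV{n+2}}$ coincides with the abstract class of the matching family, which gives exactly the map asserted in the statement.

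The main obstacle is this final identification: one must verify that the Hurewicz map $h_n$, the cycle-restriction isomorphism $\iota_\star$, the connecting map $\hat\vartheta_n$ of \wref{eqhurewicz}, and the coskeleton comparison all cohere so as to transport one cocycle representative into the other. Once this diagram chase is carried out, the proposition follows immediately from Corollary \ref{cco}.
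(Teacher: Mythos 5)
Your plan follows the same high-level strategy as the paper's proof: invoke the explicit cocycle $b$ from \eqref{eqbcocycle}, reduce via Corollary \ref{cco} to the restriction to the CW-basis object $\bG{n+2}$, and observe that the matching family collapses because the CW structure forces $d_i\bar\iota_{n+2}=0$ for $i\geq 1$. Up to that point you are on track.

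However, there is a genuine gap, which you yourself flag by writing that the final identification ``must verify that the Hurewicz map $h_n$, the cycle-restriction isomorphism $\iota_\star$, the connecting map $\hat\vartheta_n$, and the coskeleton comparison all cohere'' and that ``[o]nce this diagram chase is carried out, the proposition follows.'' You have not carried it out, and it is not merely routine: the paper does not perform this chase at all. Instead, it avoids it by \emph{constructing} the weak equivalence $f:\Gd\to\Po{n}\Kd=\csk{n+1}\Kd$ explicitly. Since $G_{n+1}$ is free and the target is $(n+1)$-coskeletal, by Yoneda it suffices to specify where $f$ sends the tautological $(n+1)$-simplex $\iota_{n+1}$, and the paper sends it to the strictly matching collection $(d_0^{V_{n+1}},\dots,d_{n+1}^{V_{n+1}})$. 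With this in hand, $f(\alpha)$ for $\alpha=\bdz{\bG{n+2}}$ is computed \emph{directly}, using $f(\alpha)=f(\iota_{n+1})\circ[\bdz{\bV{n+2}}]$ and the fact that $\bdz{\bV{n+2}}$ lands in $\mC{n+1}$, giving the matching collection $[(d_0^{V_{n+1}}\circ\bdz{\bV{n+2}},0,\dots,0)]$. The isomorphism $\mC{n+1}\csk{n}\qVd{n+1}\cong\mZ{n}\qVd{n+1}$ (equation \eqref{eqpcz}) then lands the representative in the right place. Your phrase ``Using a cofibrant replacement $\Gd\xra{\sim}\tBL$'' glosses over exactly this: you need the \emph{specific} comparison map into $\Po{n}\Kd$, not just the existence of some weak equivalence to $\tBL$, because you must compute what it does to $\bar\iota_{n+2}$. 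Without the explicit $f$, the diagram chase you sketch would have to reconstruct the same information through the spiral sequence, which is considerably less direct and is not actually what the paper does. Fill in the explicit construction of $f$ and the computation of $f(\alpha)$, and drop the appeal to $h_n$ and the spiral sequence, which are not needed here.
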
	
	
\begin{proof}
Let \w{\qWd{n}:=\csk{n}\qVd{n+1}} be an \wwb{n-1}\sps\
for $\Lambda$ as above, and let \w{\Kd:=\piA\qWd{n}} be the corresponding
simplicial \PAa[.] We begin by constructing a weak equivalence
\w{f:\Gd\to\Po{n}\Kd=\csk{n+1}\Kd}  as follows:

Since \w{\Po{n}\Kd} is \wwb{n+1}coskeletal, it suffices to define
$f$ on \w[.]{\sk{n+1}\Gd} On the other hand, since
\w{\qWd{n}:=\csk{n}\qVd{n+1}} and
\w[,]{\tau_{n+1}\piA\qVd{n+1}=\tau_{n+1}\Gd} we see that
\w{\sk{n}\Po{n}\Kd} is isomorphic to \w[,]{\sk{n}\Gd} and thus for
simplicity we assume that $f$ is the identity through simplicial
dimension $n$. 

Since we want \w{f_{n+1}} to be a map of \PAa[s], and \w{G_{n+1}} is
free, by the Yoneda Lemma it is enough to say where \w{f_{n+1}} takes
the tautological \wwb{n+1}simplex \w[,]{\iota_{n+1}\in G_{n+1}\lin{V_{n+1}}}
 corresponding to \w[.]{\Id\in[V_{n+1},V_{n+1}]\cong G_{n+1}\lin{V_{n+1}}}
In order to describe:
$$
f_{n+1}(\iota_{n+1})~\in~(\csk{n+1}\Kd)_{n+1}\lin{V_{n+1}}~=~
\K_{n+1}\lin{V_{n+1}}~=~(\piA\csk{n}\qVd{n+1})_{n+1}\lin{V_{n+1}}~,
$$
\noindent we need a map \w{V_{n+1}\to(\csk{n}\qVd{n+1})_{n+1}=\match{n+1}\Vd} 
(the matching object for \w[):]{\Vd} that is, a strict matching
collection of \w{n+2} maps \w[.]{V_{n+1}\to V_{n}} This is provided by:
\begin{myeq}\label{eqmatch}
(d^{V_{n+1}}_{0},\,d^{V_{n+1}}_{1},\,\dotsc,d^{V_{n+1}}_{n+1})~.
\end{myeq}
\noindent Since $f$ so defined obviously commutes with the face and
degeneracy maps, this defines a map of simplicial \PAa[s] 
\w[.]{f:\Gd\to\Po{n}\piA\qWd{n}} Moreover, $f$ is necessarily a weak
equivalence, since the only non-trivial homotopy group on either 
side is in dimension $0$.

Combined with the description of the $n$-th $k$-invariant in \S
\ref{sdki}, we see that the obstruction
\w{\beta_{n}\in\HAQ{n+2}(\Lambda,\Omega^{n}\Lambda)} of
\wref{eqaqcohcl} is represented by the map of simplicial \PAa[s]
\w[.]{k_{n}\circ f:\Gd\to\widetilde{E}_{\Lambda}(\pi_{n+1}\Kd,n+2)}
This in turn is determined by the cocycle (\PAa map)
\w[,]{b\circ \mC{n+2}(f):\mC{n+2}\Gd\to\pi_{n+2}\Kd}
where $b$ is the cocycle of \wref[,]{eqbcocycle} by the naturality
in Propostition \ref{pmoore}.  By Lemma \ref{lcocw}, it suffices to
say where \w{b\circ \mC{n+2} (f)} sends the tautological \wwb{n+2}simplex
\w[,]{\bar{\iota}_{n+2}\in\mC{n+2}\Gd\lin{\bV{n+2}}\subset G_{n+2}\lin{\bV{n+2}}}
corresponding to the inclusion
\w[.]{\bar{\iota}_{n+2}:\bG{n+2}\hra G_{n+2}} Since the target is
\wwb{n+1}coskeletal, the \wwb{n+2}simplex
\w{f_{n+2}(\bar{\iota}_{n+2})} in \w{(\csk{n+1}\Kd)_{n+2}\lin{\bV{n+2}}} 
is given by the matching collection of \w{n+3} elements
in \w[:]{\K_{n+1}\lin{\bV{n+2}}}
$$
\left(f\left(d^{G_{n+2}}_{0}\circ \bar{\iota}_{n+2}\right),\,
f\left(d^{G_{n+2}}_{1}\circ \bar{\iota}_{n+2}\right),\,\dotsc,
f\left(d^{G_{n+2}}_{n+2}\circ \bar{\iota}_{n+2}\right)\,\right)~.
$$
\noindent This is simply
\w[,]{\left(f\left(\bdz{\bG{n+2}}\right),0,0,\dotsc,0\right)} 
since \w{\bar{\iota}_{n+2}:\bG{n+2}\hra G_{n+2}} lands in
\w[,]{\mC{n+2}\Gd} so \w{d^{G_{n+2}}_{i}\circ \bar{\iota}_{n+2}=0} for \w[.]{i>0}

Each element \w{f(\alpha)\in \mC{n+1}\Kd\lin{\bV{n+2}}} is
represented by a map 
\w{\tilde\alpha:\bV{n+2}\to(\csk{n}\Vd)_{n+1}=\match{n+1}\Vd}
induced by \wref{eqmatch} \wwh so it is the homotopy class of the
collection of \emph{strictly} matching maps \w[:]{\bV{n+2}\to V_{n}}
$$
\left(d^{V_{n+1}}_{0}\tilde\alpha,\,
d^{V_{n+1}}_{1}\tilde\alpha,\,d^{V_{n+1}}_{2}\tilde\alpha,\,\dotsc,
d^{V_{n+1}}_{n+1}\tilde\alpha\right)~.
$$
The class \w{\alpha=\bdz{\bG{n+2}}} is obtained by precomposing
the tautological class \w{\iota_{n+1} \in G_{n+1}\lin{V_{n+1}}} with
the homotopy class of \w[.]{\bdz{\bV{n+2}}:\bV{n+2} \to V_{n+1}}
Thus by \wref[:]{eqmatch}
\begin{align*}\label{eqdzmatch}
f(\alpha)~=&f\left(\iota_{n+1} \circ
\left[\bdz{\bV{n+2}}\right]\right)~=~
f\left(\iota_{n+1}\right) \circ \left[\bdz{\bV{n+2}}\right]~= \\
&\left[\left(d^{V_{n+1}}_{0},\,d^{V_{n+1}}_{1},\,\dotsc,
d^{V_{n+1}}_{n+1}\right)\right] \circ \left[\bdz{\bV{n+2}}\right]~= \\
&\left[\left(d^{V_{n+1}}_{0}\circ\bdz{\bV{n+2}},\,
d^{V_{n+1}}_{1}\circ\bdz{\bV{n+2}},\,\dotsc
d^{V_{n}+1}_{n+1}\circ\bdz{\bV{n+2}}\,\right)\right]~.
\end{align*}
\noindent Since we assumed that \w{\bdz{n+2}:\bV{n+2}\to V_{n+1}} lands in
\w[,]{\mC{n+1}\qVd{n+1}} this collection is equal to
\w[.]{\left[\left(d^{V_{n+1}}_{0}\circ\bdz{\bV{n+2}},0,0,\dotsc,0\right)\right]}

Since \w[,]{\qWd{n}:=\csk{n}\qVd{n+1}} we find that
\begin{myeq}\label{eqpcz}
\mC{n+1}\Wd~\cong~\mZ{n}\Vd
\end{myeq}
\noindent (see \cite[Fact 3.3]{BlaCH}). We have thus described a representing map
\w[,]{\bV{n+2}\to\mZ{n}\qVd{n+1}} as required.
\end{proof}

\begin{mysubsection}{Ladder diagrams}\label{sladders}
In order to identify the target of the map representing \w{\beta_{n}}
in Proposition \ref{pexistobst}, we must analyze the isomorphisms
\w{s_{i}} in the spiral long exact sequence \wref[.]{eqspiral} 
For this, we need the following technical tool:

Given \w{X \in \MA} and \w{\Yd \in s\M} with \w{\gamma_{m}:X\to\mZ{m}\Yd} 
a morphism in $\M$, let \w{\cone{X}} denote the cone on $X$, and
\w[,]{i:X\to\cone{X}} \w{j_{m}:\mZ{m}\Yd\to\mC{m}\Yd} 
the inclusions. If \w{g_{m}=j_{m}\gamma_{m}} is null
homotopic, then a choice of null homotopy \w{H_{m}} 
yields a commutative diagram:
$$
\xymatrix@R=25pt{
X \ar[d]^{\gamma_{m}} \ar[dr]^{g_{m}} \ar[r] & \cone{X}
\ar@{.>}[d]^{H_{m}} \ar[r] & \Sigma X \ar@{.>}[d]^{\gamma_{m-1}} \\
\mZ{m} \Yd \ar[r]_{j_{m}} & \mC{m} \Yd \ar[r]_{d_{0}} & \mZ{m-1} \Yd .
}
$$
\noindent The reason is \w{d_{0} j_{m}=0} by the definition of
\w[,]{\mZ{m} \Yd} hence it follows that 
\w[,]{d_{0} g_{m}=d_{0} j_{m} \gamma_{m}=0} so \w{d_{0}H_{m}} descends to a map
\w{\cone{X}/X \to \mZ{m-1}\Yd} and identifying \w{\cone{X}/X} with \w{\Sigma X}
produces a map \w[.]{\gamma_{m-1}:\Sigma X \to \mZ{m-1}\Yd}
\end{mysubsection}

\begin{defn}\label{dladder}
A \textit{ladder diagram} from \w{\gamma_{m}} to \w{\gamma_{k}}
is a commutative diagram of the form:
$$
\xymatrix@R=25pt{
X \ar[d]^{\gamma_{m}} \ar[dr]^{g_{m}} \ar[r] & \cone{X} \ar[d]^{H_{m}} \ar[r] &
\Sigma X \ar[r] \ar[d]^{\gamma_{m-1}} & \dotsb\Sigma^{m-k-1} X \ar[r]
\ar[d]^{\gamma_{k+1}} \ar[dr]^{g_{k+1}}	
& \cone{\Sigma^{m-k-1}X} \ar[d]^{H_{k+1}} \ar[r] & 
\ar[d]^{\gamma_{k}} \Sigma^{m-k}X \\
\mZ{m} \Yd \ar[r]_{j_{m}} & \mC{m} \Yd \ar[r]_{d_{0}} & 
\mZ{m-1} \Yd \ar[r] & \dotsb \mZ{k+1}\Yd \ar[r]_{j_{k+1}} & 
\mC{k+1} \Yd \ar[r]_{d_{0}} & \mZ{k} \Yd ~.
}
$$
\end{defn}

Consider any ladder diagram ending in:
\mydiagram[\label{eqlad}]{
W \ar[d]^{\gamma_{n}} \ar[dr]^{g_{n}} \ar[r] & \cone{W} \ar[d]^{H_{n}} \ar[r] &
\Sigma W \ar[d]^{\gamma_{n-1}} \ar[dr]^{g_{n-1}} \\
\mZ{n} \Yd \ar[r]_{j_{n}} & \mC{n} \Yd \ar[r]_{d_{0}} & \mZ{n-1} \Yd
\ar[r]_{j_{n-1}} & \mC{n-1} \Yd ~. 
}
\noindent In order to continue building it to the right, we would need
to have chosen \w{H_{n}} carefully to induce \w{\gamma_{n-1}} such that
\w{g_{n-1}=j_{n-1} \circ \gamma_{n-1}} is null homotopic.
For this purpose, we can sometimes use the following:

\begin{lemma}\label{furtherladder}
If \w{W \in \MA} and \w[,]{\pi_{n-1}\piA\Yd\lin{\Sigma W}=0} then
the null homotopy \w{H_{n}} in \eqref{eqlad} can be chosen so the map
\w{g_{n-1}} it induces is null homotopic. 
\end{lemma}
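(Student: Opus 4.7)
The plan is to analyze how the induced map $\gamma_{n-1}$, and hence $g_{n-1}=j_{n-1}\gamma_{n-1}$, depends on the choice of null-homotopy $H_{n}$, and to express the obstruction to making $g_{n-1}$ null-homotopic as an element of $\pi_{n-1}\piA\Yd\lin{\Sigma W}$, which vanishes by hypothesis.

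First, I would exploit the Puppe cofiber sequence $W\hra \cone{W}\to\Sigma W$: since $W\in\MA$, the suspension $\Sigma W$ is a homotopy cogroup object, so any two extensions $H_{n},H_{n}'$ of the given map $j_{n}\gamma_{n}:W\to\mC{n}\Yd$ are related by a well-defined difference class $K\in[\Sigma W,\mC{n}\Yd]$. A direct diagram chase -- using that $d_{0}\circ j_{n}=0$ strictly by the definition of $\mZ{n}\Yd$, so that both $d_{0}H_{n}$ and $d_{0}H_{n}'$ factor through $\cone{W}/W=\Sigma W$ to yield $\gamma_{n-1}$ and $\gamma_{n-1}'$ respectively -- should show that $\gamma_{n-1}$ and $\gamma_{n-1}'$ differ by $d_{0}K$, and hence that $g_{n-1}$ and $g_{n-1}'$ differ by $j_{n-1}d_{0}K$. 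This is precisely the Moore differential $\partial_{n}$ applied to $K$ in the chain complex $[\Sigma W,\mC{\ast}\Yd]_{\M}$.

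Next, I would observe that $g_{n-1}$ is automatically a cycle in this chain complex, since it factors through $\mZ{n-1}\Yd$ and the composite $\mZ{n-1}\Yd\xra{j_{n-1}}\mC{n-1}\Yd\xra{d_{0}}\mZ{n-2}\Yd$ is strictly zero by the definition of $\mZ{n-1}\Yd$. Consequently $[g_{n-1}]$ descends to a well-defined element of $H_{n-1}$ of the complex, independent of the choice of $H_{n}$. Via the natural isomorphism \wref{eqcommmoor}, this chain complex identifies with $\mC{\ast}\piA\Yd\lin{\Sigma W}$, whose $(n-1)$-st homology is exactly $\pi_{n-1}\piA\Yd\lin{\Sigma W}$. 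The hypothesis then forces $[g_{n-1}]=0$, so $g_{n-1}$ is itself a Moore boundary: there exists $K:\Sigma W\to\mC{n}\Yd$ with $j_{n-1}d_{0}K$ equal to the inverse of $g_{n-1}$ in the cogroup $[\Sigma W,\mC{n-1}\Yd]$. Replacing $H_{n}$ by the extension corresponding to this $K$ under the Puppe torsor action will then yield a null-homotopy whose induced $g_{n-1}$ is null-homotopic, completing the argument. The main delicate point will be verifying carefully that two extensions $H_{n},H_{n}'$ indeed give induced $\gamma_{n-1}$'s differing precisely by $d_{0}K$; this should be a routine chase using the universal property of the cone, but it requires attentive tracking of the cogroup structures and of the strict factorizations through $\mZ{n-1}\Yd$.
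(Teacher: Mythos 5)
Your proposal is correct and is essentially the paper's argument, recast in slightly more homological language. Where you observe that $g_{n-1}$ is a Moore cycle whose homology class in $\pi_{n-1}\piA\Yd\lin{\Sigma W}$ must vanish, hence $g_{n-1}$ is a Moore boundary $\partial_{n}K$, the paper equivalently observes that the vanishing of $\pi_{n-1}$ makes $d_{0}:\mC{n}\piA\Yd\lin{\Sigma W}\to\mZ{n-1}\piA\Yd\lin{\Sigma W}$ surjective, and then directly picks $\alpha_{n}$ hitting $-[\gamma_{n-1}]$ (these are the same statement, since $g_{n-1}=j_{n-1}\gamma_{n-1}$ with $j_{n-1}$ monic, so $g_{n-1}\in\operatorname{Im}\partial_{n}$ iff $\gamma_{n-1}\in\operatorname{Im} d_{0}$). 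Both arguments hinge on the same two ingredients you identify: the isomorphism \eqref{eqcommmoor} to transfer the question to the simplicial \PAa, and the coaction of $[\Sigma W,\mC{n}\Yd]$ on nullhomotopies (the ``Puppe torsor'' structure, which the paper invokes via Spanier's $\top$-notation) to modify $H_{n}$ and verify the resulting $g'_{n-1}$ vanishes. The independence-of-$H_{n}$ remark you include is a true but unneeded extra; the final verification that the modified nullhomotopy gives $g'_{n-1}\sim 0$, which you flag as the ``delicate point,'' is exactly the short calculation the paper carries out at the end of its proof.
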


\begin{proof}
Suppose \w{H_{n}} is chosen so that the resulting \w[.]{[g_{n-1}] \neq 0}
Then by assumption the map 
\w{d_{0}:\mC{n}\piA\Yd\lin{\Sigma X}\to\mZ{n-1}\piA\Yd\lin{\Sigma X}}
is surjective.  By \wref[,]{eqcommmoor} there is then a map
\w{\alpha_{n}:\Sigma X\to\mC{n}\Yd} with 
\w[,]{d_{0}[\alpha_{n}]=-[\gamma_{n-1}]} and we choose
\w{H'_{n}=H_{n}\top\alpha_{n}} (using the notation of \cite[\S 2]{SpaS}
for the coaction of \w{[\Sigma X,Z]} on a nullhomotopy \w[).]{H:\cone{X}\to Z}
This new nullhomotopy of \w[,]{\gamma_{n}} induces a map
\w{\gamma'_{n-1}:\Sigma X \to \mZ{n-1}\Yd} as above, which in turn
yields \w[.]{g'_{n-1}} In fact,
\begin{equation*}
\begin{split}
[g'_{n-1}]~=~(j_{m-1})_{\ast}[d_{0}(H_{n}\star\alpha_{n})]~=&~
(j_{n-1})_{\ast}[\gamma_{n-1}] + (j_{n-1})_{\ast}(-[\gamma_{n-1}])\\
=&~[g_{n-1}]-[g_{n-1}]~=~0~, 
\end{split}
\end{equation*}
as required.
\end{proof}

We now exploit the usual proof of exactness for the homotopy sequence
of a fibration at the fiber position to show:

\begin{prop}\label{pkinv}
Under the assumptions of \S \ref{assumptions},
given \w{\gamma_{n}=d_{0} \bdz{\bV{n+2}}:\bV{n+2} \to \mZ{n} \qVd{n+1}}
there exists a ladder diagram from \w{\gamma_{n}} to 
\w[,]{\gamma_{0}:\Sigma^{n}\bV{n+2}\to\mZ{0}\qVd{n+1}=V_{0}}  with
adjoint whose homotopy class \w{[\tilde \gamma_{0}]} in
\w{\Omega^{n}\pi_{0}\piA\qVd{n+1}\lin{\bV{n+2}}
\cong\Omega^{n}\Lambda\lin{\bV{n+2}}}
represents the obstruction class \w[.]{\beta_{n}}
\end{prop}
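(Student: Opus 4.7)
The plan is to build the ladder by downward induction on $k$, and then identify the resulting class $[\tilde\gamma_0]$ with $\beta_n$ via Proposition \ref{pexistobst} by recognizing each rung of the ladder as a geometric model for a connecting homomorphism in the spiral long exact sequence \eqref{eqspiral}.

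\emph{Starting the ladder.} First I would verify that $g_n := j_n \circ \gamma_n: \bV{n+2} \to \mC{n}\qVd{n+1}$ is null-homotopic, so that the initial nullhomotopy $H_n$ exists. The map $g_n$ coincides with $\partial_{n+1} \circ \bdz{\bV{n+2}}$, so under the isomorphism $\iota_\star$ of \eqref{eqcommmoor} its class corresponds to $\partial_{n+1}^{\piA \qVd{n+1}} \circ \bdz{\bG{n+2}}$, which vanishes because $\bdz{\bG{n+2}}$ lands in $\mZ{n+1}\Gd = \Ker(\partial_{n+1})$. Hence some nullhomotopy $H_n: \cone{\bV{n+2}} \to \mC{n}\qVd{n+1}$ exists, giving the first rung of the ladder and producing $\gamma_{n-1}:\Sigma\bV{n+2} \to \mZ{n-1}\qVd{n+1}$.

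\emph{Iterating.} For each $k$ running from $n$ down to $2$, given $\gamma_k$ together with a null-homotopy $H_k$ of $g_k$, Lemma \ref{furtherladder} (applied with $W = \Sigma^{n-k}\bV{n+2}$) lets me choose $H_k$ so that the resulting $g_{k-1} = j_{k-1}\gamma_{k-1}$ is again null-homotopic, \emph{provided} that $\pi_{k-1}\piA\qVd{n+1}\lin{\Sigma^{n-k+1}\bV{n+2}}$ vanishes. Since $\qWd{n} = \csk{n}\qVd{n+1}$ is an $(n-1)$-\sps\ for $\Lambda$, and $\qWd{n}$ and $\qVd{n+1}$ share their first $n+1$ simplicial levels, \eqref{eqnqpia} yields $\pi_{k-1}\piA\qVd{n+1} = 0$ for $1 \le k-1 \le n$, which covers every step. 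At the final stage $k=1$ one simply chooses any null-homotopy $H_1$ (no further vanishing is required) to produce $\gamma_0: \Sigma^n\bV{n+2}\to\mZ{0}\qVd{n+1} = V_0$.

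\emph{Identification with $\beta_n$.} By Proposition \ref{pexistobst}, $\beta_n$ is represented by the class of $\gamma_n = d_0^{V_{n+1}} \circ \bdz{\bV{n+2}}$ in $\pinat{n}\qVd{n+1}\lin{\bV{n+2}}$ (via the surjection $\hat\vartheta_n$ from \eqref{eqhurewicz} applied to the class in $\piA\mZ{n}\qVd{n+1}\lin{\bV{n+2}}$). Each rung of the ladder is, by construction, the geometric realization of the connecting homomorphism for the fibration sequence $\mZ{k}\qVd{n+1}\to\mC{k}\qVd{n+1}\to\mZ{k-1}\qVd{n+1}$ of \eqref{eqsn}, which is precisely the map $s_k$ of the spiral exact sequence \eqref{eqspiral}. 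Since $\pi_k\piA\qVd{n+1}$ vanishes in the relevant range, the spiral sequence collapses to give isomorphisms $s_k: \Omega\pinat{k-1}\qVd{n+1} \xrightarrow{\cong} \pinat{k}\qVd{n+1}$, and iterating them identifies $\pinat{n}\qVd{n+1}\lin{\bV{n+2}}$ with $\Omega^n \pinat{0}\qVd{n+1}\lin{\bV{n+2}} \cong \Omega^n\Lambda\lin{\bV{n+2}}$. Thus the class of $\gamma_n$ and the adjoint class $[\tilde\gamma_0]$ correspond under this iterated isomorphism, so $[\tilde\gamma_0]$ represents $\beta_n$.

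\emph{Main obstacle.} The technical heart is verifying that a rung of the ladder (coming from the elementary cofiber identification $\cone{X}/X \simeq \Sigma X$ in Definition \ref{dladder}) coincides with the connecting map $s_k$ of \eqref{eqspiral} as abstractly defined from the fibration sequence \eqref{eqsn}. This is essentially bookkeeping, but must be done precisely at the level of $\piA$ (rather than just up to some indeterminacy) in order to conclude that the full $n$-fold iterate matches the representative of $\beta_n$ provided by Proposition \ref{pexistobst}.
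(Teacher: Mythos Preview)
Your proposal is correct and follows essentially the same route as the paper's own proof: both build the ladder by downward induction using Lemma~\ref{furtherladder} (with the vanishing hypothesis supplied by \eqref{eqnqpia}), and both identify the result with $\beta_n$ by recognizing each rung as computing the connecting map $s_k$ of \eqref{eqspiral}, starting from Proposition~\ref{pexistobst}. The only organizational difference is that you separate the construction of the ladder from the running identification with $\beta_n$, and you make the base case (null-homotopy of $g_n$) explicit via \eqref{eqcommmoor}, whereas the paper folds everything into a single downward induction and treats the connecting-map identification as immediate from the definition of $s_i$.
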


\begin{proof}
First, since \w{\qVd{n+1}} is a \qps, it follows from \wref{eqnqnat}
and \wref{eqnqpia} that the homomorphisms \w{s_{i}} in the spiral long
exact sequence \wref{eqspiral} are isomorphisms for \w[.]{0\leq i\leq n}  	
We assume by downward induction on $i$ (starting with \w{i=n} from
Proposition \ref{pexistobst}) that \w{\gamma_{i}} represents
\w{\beta_{n}} in 
\w[.]{\pinat{i}\qVd{n+1}\lin{\Sigma^{n-i}\bV{n+2}}\cong
\Omega^{n-i}\pinat{i}\qVd{n+1}\lin{\bV{n+2}}} 
	
By definition, \w{s_{i}} is induced by the connecting homomorphism for
the fibration sequence \wref{eqsn} (see \S \ref{srmc}).  Thus, 
a preimage of \w{\gamma_{i}:\Sigma^{n-i}\bV{n+2}\to \mZ{i}\qVd{n+1}}
under \w{s_{i}} is obtained by choosing a null homotopy
\w{H_{i}:\cone{\Sigma^{n-i}\bV{n+2}}\to \mC{i}\qVd{n+1}} for
\w[,]{j_{i}\circ\gamma_{i}} noting that 
\w{d_{0}\circ H_{i}\rest{\Sigma^{n-i}\bV{n+2}}=0} so \w{H_{i}} 
induces a map \w{\gamma_{i-1}:\Sigma^{n-i+1}\bV{n+2}\to \mZ{i-1}\qVd{n+1}}
with \w[.]{s_{i}[\gamma_{i-1}]=[\gamma_{i}]}  Evidently, this is just extending a
ladder diagram from \w{\gamma_{n}} to \w{\gamma_{i}}
and thereby producing a ladder diagram from \w{\gamma_{n}} to \w[,]{\gamma_{i-1}}
so is possible by \wref{eqnqpia} verifying
the vanishing condition required in order to apply Lemma \ref{furtherladder}.
\end{proof}

%
%
\sect{Higher homotopy operations as existence obstructions}
\label{chhoeo}

In \S \ref{speckinv} we saw that vanishing of the $n$-th ``algebraic''
obstruction to realizing the \PAa $\Lambda$ determines whether we can
choose a representative \w{\bdz{\bV{n+2}}} for the attaching map 
\w{\bdz{\bG{n+2}}} so that the composite
\w{d_{0}\circ\bdz{\bV{n+2}}:\bV{n+2}\to\mZ{n}\qVd{n+1}} is \emph{zero} (and not
just null-homotopic).

The same question may also be addressed using the inductive
rectification process of \S \ref{sipr}, since we can view this as
trying to find a strict representative for a (pointed) diagram in the homotopy
category.  We need the relative version of \S \ref{sdks} for this,
since we want to ensure that \w{d_{i}^{n+1} \bdz{\bV{n+1}}} remains
strictly zero for \w[,]{i>0} and want to leave the \wwb{n+1}truncation
\w{\qVd{n+1}} untouched.

Because the Dwyer-Kan-Smith \ww{\SaO}-cohomology obstructions are
difficult to compute, we prefer the more ``geometric'' approach of \S
\ref{shho}; but for this the indexing category must be a lattice. 
This is not true of (truncated) simplicial objects, because of the
degeneracy maps.  However, in our case we can work with a smaller 
indexing category, which \emph{is} a lattice, by using the CW
structure to avoid dealing with the degeneracies. We can further
reduce the complexity by careful initial choices (see Section
\ref{cmhho} below). 

\begin{remark}\label{rcw}
By \cite[Theorem 3.16]{BlaCW} any chosen CW-resolution \w{\Gd} of a
\emph{realizable} \PAa \w{\Lambda :=\piA X} (cf. \S \ref{dcw}) can be
realized by a CW-resolution \w{\Vd} of $X$ in \w[,]{s\M} with
\w{\piA\bV{n}\cong\bG{n}} and \w[,]{(\bdz{\bV{n}})_{\#}=\bdz{\bG{n}}} and
thus \w[.]{\piA\Vd\cong\Gd}

We observe also that if we want to realize a CW-resolution \w{\Gd\to\Lambda}
in \ww{s\PAAlg} \wwh or equivalently, to rectify the corresponding
simplicial object up-to-homotopy in \w[,]{s(\ho\M)} obtained by choosing
some \w{\bV{n}\in\MA} with \w{\piA\bV{n}\cong\bG{n}} for each
\w[,]{n\geq 0} with \w{\bdz{V_{n}}} uniquely determined up to homotopy by
\w{\bdz{G_{n}}} \wwh it suffices to inductively rectify the corresponding
\emph{restricted} simplicial object (in which we forget the degeneracies),
since the degeneracies can then be reconstructed from \wref[.]{eqlatch}
\end{remark}

\begin{defn}\label{ddso}
Write \w{\tD{}} for the indexing category for restricted simplicial
objects, with \w{\Obj(\tD{}):=\{\bn\}_{n=0}^{\infty}} and maps
\w{d_{0},\dotsc,d_{n}:\bn\to\bnm} satisfying the simplicial identities
\w{d_{i}d_{j}=d_{j-1}d_{i}} for \w[.]{i<j}

Similarly, \emph{augmented} restricted simplicial objects are
represented by \w[,]{\tDp{}} with an additional object \w{\bmo} and
\w[.]{\var=d_{0}:\bze\to\bmo} The truncated categories \w{\tD{n}} and
\w{\tDp{n}} are the full subcategories with objects
\w{\bn,\bnm,\dotsc,\bze} and \w[,]{\bn,\bnm,\dotsc,\bze,\bmo}
respectively. The latter is a lattice of length \w[,]{n+1} with
(weakly) terminal object \w[.]{\vf:=\bmo}

Finally, let \w{\hD{n+2}} be the category with object set
\w[,]{\{\bze,\bo,\dotsc,\bnpp\}} all the maps of \w[,]{\tD{n+1}} and
one additional map \w{\bdz{}:\bnpp\to\bnp} such that
\w{d^{n+1}_{i}\circ\bdz{}=0} for \w[.]{0\leq i\leq n+1}
\end{defn}

Thus \w[,]{\qVd{n+1}} together with \w{\bV{n+2}} and its face maps,
define a functor \w[.]{\cVd{n+2}:\hD{n+2}\to\ho\M} Moreover, we have
maps \w{d_{i}^{V_{k}}:V_{k}\to V_{k-1}}
and \w{\bdz{\bV{n+2}}:\bV{n+2}\to V_{n+1}} which, together with
\w[,]{\hVd{n+1}} ``almost'' define a functor
\w{\hVd{n+2}:\hD{n+2}\to\M} lifting \w[,]{\cVd{n+2}}
in that all (pointed) identities of \w{\hD{n+2}} hold strictly, except:
\begin{myeq}\label{eqcompzero}
d_{0}\circ\bdz{\bV{n+2}}~\sim~0
\end{myeq}
\noindent (the composite need not be strictly zero).

Rectifying \w{\cVd{n+2}} (thus turning \w{\hVd{n+2}}
into a strict functor) is equivalent to producing a full
\wwb{n+2}truncated simplicial object \w{\qVd{n+2}} realizing
\w[,]{\tau_{n+2}\Gd} by Remark \ref{stepred}.

\begin{mysubsection}{Using the Dwyer-Kan-Smith approach}
\label{sudks}
In order to apply the inductive procedure of \S \ref{sipr},
let \w{\C:=\tD{n+1}} and \w[,]{\D=\hD{n+2}} so
\w{\OO=\{\bze,\bo,\dotsc,\bnp\}} and \w[.]{\Op=\OO\cup\{\bnpp\}} The
\wwb{n+1}truncated simplicial object \w{\qVd{n+1}} provides the
 strictly commuting diagram \w[,]{\tD{n+1}\to\M} and thus the constant
\ww{\SaO} map \w[.]{X:\co{\tD{n+1}}\to\MO}

The lifting diagram of \ww{\SaOp}-categories representing \wref{eqdks}
in our case is:
\mydiagram[\label{eqdksspec}]{
\Fs\tD{n+1} \ar[d]_{r}^{\simeq} \ar[r]^{\Fs i} &
\Fs\hD{n+2} \ar[d]^{\simeq} && \\
\co{\tD{n+1}} \ar[r]^{i_{\ast}} \ar[d]_{X} &
\QQ\ar[rd]^{\hX{\ell-1}} \ar@{.>}[r]^{\hX{\ell}} & \Po{\ell}\MOp \ar[d] & \\
\MO \ar[r]_{j} & \MOp \ar[r]_{q^{n}} & \Po{\ell-1}\MOp \ar[r]^<<<<{k_{\ell-1}} &
E_{\hD{n+2}}(\pi_{\ell}\MOp,\ell+1)
}
\noindent for each \w[,]{1\leq\ell\leq n} where $\QQ$ is the pushout
\w{\Fs(\hD{n+2},\tD{n+1})}  in \w[.]{\SaOpC} The initial choice of
\w{\hX{0}:\QQ\to\Po{0}\MOp=\csk{1}\MOp} is actually equivalent to the ``lax
functor'' \w[,]{\hVd{n+2}} together with choices of nullhomotopies
\w{H_{i}:d_{i}\circ\bdz{V_{n+2}}\sim 0} \wb{0\leq i\leq n+1}
in \wref[.]{eqcompzero}
\end{mysubsection}

\begin{mysubsect}{The geometry of $\QQ$}
\label{ssq}

To analyze the  \ww{\SaOp}-maps \w[,]{\hX{k}:\QQ\to\Po{k}\MOp}
or their adjoints \w[,]{\sk{k+1}\QQ \to \MOp} we need an explicit
description of the simplices of each simplicial mapping space
\w[.]{\map_{\QQ}(\bm,\bj)} We denote the $i$-th ``internal'' face map
of each such simplicial set by \w[,]{\partial_{i}} to avoid confusion
with the ``categorical'' face maps \w{d_{i}} (morphisms of
\w[).]{\tD{n+1}} 

Recall that the free simplicial resolution
\w{\Fs\K} of a category $\K$ is constructed by iterating the  free
category monad \w{FU} (cf.\ \S \ref{srp}). We denote a $k$-simplex
\w{\sigma^{k}} of \w{\Fs\K} by a \wwb{k+1}fold 
parenthesization of a sequence of maps from $\K$, which we think of as
representing a $k$-th order homotopy betwen the vertices of $\sigma$.
The $i$-th face \w{\partial_{i}\sigma^{k}} is obtained by omitting
the \wwb{i+1}st level of parentheses, and the degeneracy
\w{s_{j}\sigma^{k}} is obtained by iterating the \wwb{j+1}st level
of parentheses \wb[.]{0\leq i,j\leq k} The categorical composition is
denoted by concatenation. The augmentation  \w{\var:\Fs\K\to\K} is
defined by  dropping all parentheses (and composing in $\K$).

Note that the mapping space \w{\map_{\QQ}(\bm,\bj)} is discrete unless
\w[,]{\bm=\bnpp} since otherwise \w{\Fs i} (at the top of
\wref[),]{eqdksspec}  and consequently \w[,]{i_{\ast}} too, are 
isomorphisms. Consequently each pair of parentheses not surrounding a
map out of \w{\bnpp} represents a homotopy in a discrete space, so
will be omitted. Thus we only need to consider nested 
parentheses having \w{\bdz{}} (the only new $0$-vertex in 
\w[)]{\QQ\setminus\Fs\tD{n+1}} in the innermost parentheses.

Therefore, we further abbreviate the standard notation by omitting all 
right parentheses, and replacing left parentheses by vertical
bars. Moreover, every string representing a simplex $\sigma$ of 
\w{\map_{\QQ}(\bnpp,\bj)} has \w{\bdz{}} as its last entry, so we can 
omit it. Thus, \w{|d_{i}|d_3|} represents \w{(d_{i}(d_3(\bdz{})))}
in the standard notation, while \w{d_{i}||d_3|=s_{0}(d_{i}|d_3|)}
is decomposable (that is, represents a composition with a degeneracy
of a zero simplex in some discrete case) because there is no
vertical bar on the extreme left.

Now any non-degenerate $k$-simplex $\sigma$ of $\QQ$ \wb{k\geq 1}
is necessarily in \w{\map_{\QQ}(\bnpp,\bj)} for some \w[.]{j\leq n-k+1} 
It can thus be written uniquely as:
$$
\sigma~=~d_{I_{0}}|d_{I_{1}}|d_{I_{2}}|\dotsb|d_{I_{k}}|d_{I_{k+1}}
$$
\noindent with a total of \w{k+1} vertical bars. Either or both of
 \w{I_{0}} and \w{I_{k+1}} can be empty.  

Any $k$-simplex 
in \w{\map_{\QQ}(\bnpp,\,\bnmkpo)} (the maximal dimension here)
will be called \emph{atomic}. In particular, the \emph{basic atomic}
$k$-simplex is: 
\begin{myeq}\label{eqatomic}
\tau_{k}~:=~|d_{0}|d_{0}|\dots|d_{0}|~.
\end{myeq}
\end{mysubsect}

\begin{defn}\label{faceguy}

For fixed \w[,]{n\geq 1} a $k$-\emph{flag} is a sequence
\w{\vrp=(i_{1},i_{2},\dotsc,i_{k})} of \w{|\vrp|:=k} integers with 
\w[.]{0\leq i_{1}<i_{2}<\dotsc<i_{k}\leq n+1} 
The collection \w{\Psi^{n+2}_{n-k+1}} of $k$-flags is in one-to-one
correspondence with the set \w[,]{\Hom_{\tD{n+1}}(\bnpp,\bnmkpo)}
where $\vrp$ represents the map 
\w{d_{\vrp}=d_{i_{1}}d_{i_{2}}\dots d_{i_{k}}\bdz{}:\bnpp\to\bnmkpo}
in \w{\hD{n+2}} (cf.\ \S \ref{ddso}). It thus determines a
$k$-dimensional simplicial set \w{\cF{\vrp}} \wwh namely, the
(pointed) component of \w{(d_\vrp)} in \w[,]{\QQ(\bnpp,\,\bnmkpo)}
which we call the \emph{flag complex} for $\vrp$. We may describe \w{\cF{\vrp}}
explicitly as follows:

A $j$-simplex $\sigma$ of \w{\cF{\vrp}} is determined by an expression
of the form 
$$
d_{\ell^{0}_{1}}\dots d_{\ell^{0}_{m^{0}}}~|~
d_{\ell^{1}_{1}}\dots d_{\ell^{1}_{m^{1}}}~|~\dotsc~|~
d_{\ell^{j+1}_{1}}\dots d_{\ell^{j+1}_{m^{j+1}}}~, 
$$
\noindent with \w{\ell^{t}_{1}<\ell^{t}_{2}<\dotsc<\ell^{t}_{m^{t}}}
and \w{j+1} vertical bars, where the composite (obtained by omitting
all bars) is \w[,]{d_\vrp} and we allow bars at either end
of the sequence, as well as repeated bars. 
The face map \w{\partial_{i}} removes the \wwb{i+1}st vertical bar
(rewriting the resulting sequence in  standard form, if necessary),
and the $i$-th degeneracy repeats the \wwb{i+1}st bar, as in $\QQ$.  
\end{defn}

\begin{example}\label{egfaces}
For \w[,]{\vrp=(2<4<5)} we have a $3$-simplex \w{|d_3|d_3|d_{2}|}
in \w[,]{\cF{\vrp}} since \w[.]{d_3d_3d_{2}=d_{2}d_4d_5} 
The $2$-simplices include \w[,]{|d_{2}d_4|d_5|} \w{d_{2}|d_4|d_5|}
and \w[.]{|d_{2}d_4|d_4|}

Moreover, \w{\partial_{1}\left[ |d_3|d_3|d_{2}|\right]=|d_3d_4|d_{2}|} and
\w[.]{\parz\left[|d_{2}d_4|d_5|\right]=d_{2}d_4|d_5|} 
\end{example}

\begin{defn}\label{dboundary}
For any flag $\vrp$ as above, the \emph{boundary} of the flag
complex \w[,]{\cF{\vrp}} denoted by \w[,]{\partial\cF{\vrp}} is
the \wwb{k-1}dimensional subsimplicial set spanned by
\w[,]{\partial_{i}\sigma} for $\sigma$ a $k$-simplex of 
\w{\cF{\vrp}} \wb[.]{0\leq i\leq k} The subcomplex of
\w{\partial\cF{\vrp}} spanned by the zero-faces \w{\partial_{0}\sigma}
is called the \emph{base complex},  
written \w[;]{\cfbase{\vrp}} and the subcomplex spanned by the
other faces \wb{1\leq i\leq k} is written \w[,]{\cftop{\vrp}}
so \w[.]{\partial\cF{\vrp} \cong \cfbase{\vrp} \cup \cftop{\vrp}}

The vertex \w{|d_{i_{1}}d_{i_{2}}\dots d_{i_{k}}} of
\w[,]{\cF{\vrp}} lying in \w[,]{\cftop{\vrp}} will be called its
\emph{cone point}, and denoted by \w{c_{\vrp}} 
(see Lemma \ref{ldualperm} below). 
\end{defn}

\begin{examples}\label{egfc}
\noindent (1)~~Figure \ref{fig1} shows the $2$-dimensional flag
complex \w{\cF{\vrp}} for \w[.]{\vrp=(0<1)} It contains the basic
atomic $2$-simplex \w{\tau_{2}} as the left $2$-simplex (cf.\
\wref[).]{eqatomic}  

%
%
\begin{figure}[htbp]
$$
\xymatrix@R=15pt{
{d_{0}|d_{0}} \ar[rrr]^{d_{0}|d_{0}|}&&& {d_{0}d_{1}|}  &&& 
{d_{0}|d_{1}}  \ar[lll]_{d_{0}|d_{1}|} \\
&& {|d_{0}|d_{0}|}  && {|d_{0}|d_{1}|}  \\ \\
 &&&  \ar[uuulll]^{|d_{0}|d_{0}} {|d_{0}d_{1}} 
\ar[uuurrr]_{|d_{0}|d_{1}} \ar[uuu]^{|d_{0}d_{1}|}
}
$$
\caption[fig1]{The flag complex \w{\cF{\vrp}} for \w[.]{\vrp=(0<1)}}
\label{fig1}
\end{figure}

The base complex \w{\cfbase{\vrp}} consists of the top two
$1$-simplices \w{\parz\tau_{2}=d_{0}|d_{0}|} and \w[.]{d_{0}|d_{1}|}
Note that \w{\cfbase{\vrp}} is a triangulated dual
$2$-permutohedron on the set \w[,]{\{0,1\}} where
\w{d_{0}d_{1}} corresponds to \w{(0,1)} and \w{d_{0}d_{0}} corresponds
to \w{(1,0)} since the simplicial identities are involved in permuting
face maps\vsm. 

\noindent (2)~~Figure \ref{fig2} shows the atomic $3$-simplex 
\w{\sigma=|d_{0}|d_{1}|d_{0}|} of \w[:]{\map_{\QQ}(\bnpp,\bnmt)} more
precisely, we have cut open its boundary \w[,]{\partial\sigma} 
so that it can be depicted in the plane, with identifications of the
outer edges indicated by dotted arrows. The cone point \w{c_{\vrp}}
corresponds to the outer vertices \w{|d_{0}d_{1}d_{2}} (all
identified) and the central facet \w{\parz\sigma} is part of
\w[,]{\cfbase{\vrp}} while the bottom $2$-simplex in the figure
\w{\partial_{3}\sigma} forms part of the (upper) boundary
\w[.]{\cftop{\vrp}}   

%
%
\begin{figure}[htbp]
\begin{picture}(350,240)(0,0)
%
%
\put(50,210){\circle*{5}}
\put(20,215){{\scriptsize $|d_{0}d_{1}d_{2}$}}
%
%
\put(50,210){\line(1,0){260}}
\put(100,215){{\footnotesize $|d_{0}d_{1}d_{2}|$}}
\put(225,215){{\footnotesize $|d_{0}d_{1}d_{2}|$}}
\bezier{40}(135,225)(172,255)(225,225)
\put(135,225){\vector(-3,-1){3}}
\put(225,225){\vector(3,-1){3}}
%
%
\put(50,210){\line(2,-3){130}}
\put(50,150){\footnotesize $|d_{0}|d_{0}d_{2}$}
\put(115,50){\footnotesize $|d_{0}|d_{0}d_{2}$}
\bezier{50}(45,143)(20,75)(110,54)
\put(45,140){\vector(1,3){3}}
\put(110,54){\vector(3,-1){3}}
%
%
\put(180,210){\circle*{3}}
\put(165,215){{\scriptsize $d_{0}d_{1}d_{2}|$}}
%
%
\put(310,210){\circle*{5}}
\put(312,215){{\scriptsize $|d_{0}d_{1}d_{2}$}}
%
%
\put(310,210){\line(-2,-3){130}}
\put(278,150){\footnotesize $|d_{0}d_{1}|d_{0}$}
\put(208,50){\footnotesize $|d_{0}d_{1}|d_{0}$}
\bezier{50}(310,135)(325,85)(248,53)
\put(310,135){\vector(-1,3){3}}
\put(248,53){\vector(-3,-1){3}}
%
%
\put(115,112){\circle*{3}}
\put(80,110){{\scriptsize $d_{0}|d_{0}d_{2}$}}
%
%
\put(115,112){\line(2,3){65}}
\put(213,160){\footnotesize $d_{0}d_{1}|d_{0}|$}
%
%
\put(115,112){\line(1,0){130}}
\put(160,115){{\footnotesize $d_{0}|d_{1}|d_{0}$}}
%
%
\put(245,112){\circle*{3}}
\put(249,110){{\scriptsize $d_{0}d_{1}|d_{0}$}}
%
%
\put(245,112){\line(-2,3){65}}
\put(110,160){\footnotesize $d_{0}|d_{0}d_{2}|$}
%
%
\put(180,15){\circle*{5}}
\put(165,3){{\scriptsize $|d_{0}d_{1}d_{2}$}}
%
%
\put(114,199){\oval(21,13)}
\put(105,195){$\partial_{2}\sigma$}
\put(90,180){\fbox{\footnotesize $|d_{0}|d_{0}d_{2}|$}}
\put(244,199){\oval(21,13)}
\put(235,195){$\partial_{1}\sigma$}
\put(220,180){\fbox{\footnotesize $|d_{0}d_{1}|d_{0}|$}}
\put(179,159){\oval(21,13)}
\put(170,155){$\partial_{0}\sigma$}
\put(155,140){\fbox{\footnotesize $d_{0}|d_{1}|d_{0}|$}}
\put(155,80){\fbox{\footnotesize $|d_{0}|d_{1}|d_{0}$}}
\put(179,66){\oval(21,13)}
\put(170,63){$\partial_{3}\sigma$}
\end{picture}
\caption[fig2]{The boundary \w{\partial\sigma} of the $3$-simplex
\w{\sigma=|d_{0}|d_{1}|d_{0}|}}
\label{fig2}
\end{figure}
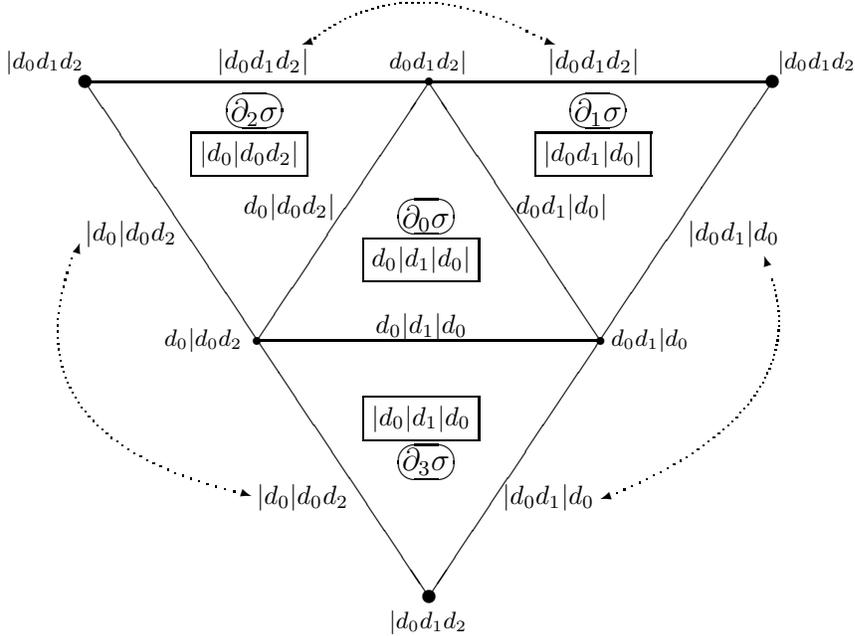

\noindent (3)~~Figure \ref{fig3} depicts the base complex  \w{\cfbase{\vrp}}  (a
triangulated dual $3$-permu\-to\-hedron) for the first flag of length
$3$ \w{\vrp=(0<1<2)} in \w[.]{\map_{\QQ}(\bnpp,\bnmt)}

%
%
\begin{figure}[htbp]
\begin{picture}(350,345)(30,10)
%
%
\put(210,180){\circle*{5}}
\put(216,177){{\scriptsize $d_{0}d_{1}d_{2}|$}}
%
%
\put(50,100){\circle*{5}}
\put(18,95){{\scriptsize $d_{0}|d_{1}d_{2}$}}
%
%
\put(50,100){\line(2,1){320}}
\put(95,145){{\footnotesize $d_{0}|d_{1}d_{2}|$}}
\put(297,215){{\footnotesize $d_{0}d_{1}|d_{0}|$}}
%
%
\put(50,100){\line(0,1){160}}
\put(12,175){{\footnotesize $d_{0}|d_{1}|d_{1}$}}
%
%
\put(50,260){\circle*{5}}
\put(18,255){{\scriptsize $d_{0}d_{1}|d_{1}$}}
%
%
\put(50,260){\line(2,-1){320}}
\put(125,225){{\footnotesize $d_{0}d_{1}|d_{1}|$}}
\put(245,135){{\footnotesize $d_{0}|d_{0}d_{2}|$}}
%
%
\put(50,260){\line(2,1){160}}
\put(91,300){{\footnotesize $d_{0}|d_{0}|d_{1}$}}
%
%
\put(210,340){\circle*{5}}
\put(195,347){{\scriptsize $d_{0}|d_{0}d_{1}$}}
%
%
\put(210,340){\line(0,-1){320}}
\put(213,235){{\footnotesize $d_{0}|d_{0}|d_{1}$}}
%
%
\put(172,70){{\footnotesize $d_{0}d_{1}|d_{2}|$}}
%
%
\put(210,340){\line(2,-1){160}}
\put(295,300){{\footnotesize $d_{0}|d_{0}|d_{0}$}}
%
%
\put(370,260){\circle*{5}}
\put(375,255){{\scriptsize $d_{0}d_{1}|d_{0}$}}
%
%
\put(370,260){\line(0,-1){160}}
\put(373,175){{\footnotesize $d_{0}|d_{1}|d_{0}$}}
%
%
\put(370,100){\circle*{5}}
\put(375,95){{\scriptsize $d_{0}|d_{0}d_{2}$}}
%
%
\put(370,100){\line(-2,-1){160}}
\put(287,50){{\footnotesize $d_{0}|d_{0}|d_{2}$}}
%
%
\put(210,20){\circle*{5}}
\put(197,10){{\scriptsize $d_{0}d_{1}|d_{2}$}}
%
%
\put(210,20){\line(-2,1){160}}
\put(95,50){{\footnotesize $d_{0}|d_{1}|d_{2}$}}
%
%
\put(100,180){\fbox{\footnotesize $d_{0}|d_{1}|d_{1}|$}}
\put(120,260){\fbox{\footnotesize $d_{0}|d_{0}|d_{1}|$}}
\put(250,260){\fbox{\footnotesize $d_{0}|d_{0}|d_{0}|$}}
\put(280,180){\fbox{\footnotesize $d_{0}|d_{1}|d_{0}|$}}
\put(250,100){\fbox{\footnotesize $d_{0}|d_{0}|d_{2}|$}}
\put(120,100){\fbox{\footnotesize $d_{0}|d_{1}|d_{2}|$}}
\end{picture}
\caption[fig3]{\w{\cfbase{\vrp} \subset \map_{\QQ}(\bnpp,\bnmt)}
for \ $\vrp=(0<1<2)$}
\label{fig3}
\end{figure}
\end{examples}

\begin{lemma}\label{ldualperm}
For any flag $\vrp$ with \w[,]{|\vrp|=k} the base complex
\w{\cfbase{\vrp}} is isomorphic to a triangulated 
dual $k$-permutohedron (a \wwb{k-1}dimensional convex polytope), 
and \w{\cF{\vrp}} is the combinatorial cone on its zero face
\w[,]{\cfbase{\vrp}} with cone point \w[.]{c_{\vrp}} 
\end{lemma}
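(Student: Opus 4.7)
The plan is to establish both assertions directly from the combinatorial description of simplices of \w{\cF{\vrp}} as bar strings \w{A_{0}|A_{1}|\dotsc|A_{j+1}} with \w{j+1} bars and total composite \w[.]{d_{\vrp}}

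\textbf{Cone structure.} The key observation is that \w{c_{\vrp}=|d_{i_{1}}\dotsc d_{i_{k}}} is a vertex of a simplex \w{\sigma=A_{0}|A_{1}|\dotsc|A_{j+1}} if and only if its leading segment \w{A_{0}} is empty. Indeed, iterating the faces \w{\partial_{1},\dotsc,\partial_{j}} on $\sigma$ removes every bar except the leftmost; rewriting the resulting single interior block in strictly increasing normal form via the simplicial identity \w{d_{a}d_{b}=d_{b}d_{a+1}} for \w{a\geq b} produces \w[,]{d_{i_{1}}\dotsc d_{i_{k}}} since the total composite along $\sigma$ is \w[.]{d_{\vrp}} The outcome agrees with \w{c_{\vrp}} precisely when \w[,]{A_{0}=\emptyset} i.e., when \w[.]{\sigma\notin\cfbase{\vrp}} Defining the cone operation on bar strings by \w{\tau\mapsto|\tau} (prepending a leftmost bar), the identities \w{\parz(|\tau)=\tau} and \w{\partial_{i+1}(|\tau)=|\partial_{i}\tau} follow immediately from the bar-manipulation rules, and together with the previous dichotomy this realizes \w{\cF{\vrp}} as the combinatorial cone on \w{\cfbase{\vrp}} with apex \w[.]{c_{\vrp}}

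\textbf{Permutohedral structure.} The non-degenerate atomic $k$-simplices of \w{\cF{\vrp}} have the form \w[,]{|d_{j_{1}}|d_{j_{2}}|\dotsc|d_{j_{k}}|} and the sequences \w{(j_{1},\dotsc,j_{k})} arising are in bijection with the $k!$ orderings in which the face maps from $\vrp$ can be composed (via the simplicial identity) to yield \w[.]{d_{\vrp}} Applying \w[,]{\parz} the maximal \wwb{k-1}simplices of \w{\cfbase{\vrp}} are then indexed by the $k!$ permutations, matching the vertex set of the \wwb{k-1}permutohedron. More generally, I would set up a bijection between $j$-simplices of \w{\cfbase{\vrp}} and ordered set partitions of \w{\{1,\dotsc,k\}} into \w{j+1} blocks (read off from the non-empty segments of the bar string), with the face \w{\partial_{i}} corresponding to merging two consecutive blocks; this is the standard cell structure on the barycentric subdivision of the \wwb{k-1}permutohedron.

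\textbf{Main obstacle.} The principal technical difficulty lies in proving well-definedness of this bijection when merging two consecutive segments: the combined indices generically fail to be strictly increasing, forcing the identity \w{d_{a}d_{b}=d_{b}d_{a+1}} to be applied, which non-trivially shifts indices. I would verify compatibility with the face operators by tracking the rewrites explicitly on the low-dimensional cases of Figures \ref{fig1}--\ref{fig3}, and carry out the general case by induction on $k$, exploiting the fact that the link of \w{c_{\vrp}} in \w{\cF{\vrp}} is precisely \w[.]{\cfbase{\vrp}}
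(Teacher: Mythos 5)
Your cone-structure argument is sound and matches the paper's: you characterize exactly when \w{c_{\vrp}} is a vertex of a bar-string simplex (leading block empty), and realize the cone by prepending a bar. The paper makes the same observation more briefly.

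The permutohedral part, however, contains a genuine error. The proposed bijection between $j$-simplices of \w{\cfbase{\vrp}} and ordered set partitions of \w{\{1,\dotsc,k\}} into \w{j+1} blocks fails already in the lowest cases. For \w{k=2}, \w[,]{j=0} the complex \w{\cfbase{(0<1)}} in Figure \ref{fig1} has three vertices, namely \w[,]{d_{0}d_{1}|} \w[,]{d_{0}|d_{0}} and \w[,]{d_{0}|d_{1}} whereas there is only one ordered set partition of a two-element set into one block. For \w{k=3} Figure \ref{fig3} exhibits $7$ vertices, $12$ edges and $6$ triangles, but ordered set partitions of \w{\{1,2,3\}} into $1$, $2$, $3$ blocks number $1$, $6$, $6$. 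The mismatch occurs because a $j$-simplex of \w{\cfbase{\vrp}} is a string \w{A_{0}|A_{1}|\dotsb|A_{j+1}} with \w{A_{0}} nonempty and \w{A_{j+1}} possibly empty: if \w{A_{j+1}=\emptyset} it has \w{j+1} nonempty segments, but otherwise it has \w[.]{j+2} These latter simplices (precisely those not incident to the terminal vertex \w{d_{i_{1}}\dotsc d_{i_{k}}|} of Remark \ref{rdualperm}) are missed by your indexing. Consequently the identification with ``the barycentric subdivision of the \wwb{k-1}permutohedron'' is also not accurate (for \w{k=3} that subdivision would have $13$ vertices, not $7$). Your concern about index shifts when merging blocks is legitimate, but it is downstream of this count mismatch.

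The paper avoids all of this by not attempting to index the lower-dimensional simplices. It only parameterizes the top-dimensional \wwb{k-1}simplices of \w{\cfbase{\vrp}} (the bar strings \w{d_{\ell_{1}}|\dotsb|d_{\ell_{k}}|} composing to \w[)]{d_{\vrp}} by exhibiting a free transitive \w{\Sigma_{k}}-action via adjacent transpositions implemented by the simplicial identities; the incidence structure among them is then handled separately in Lemma \ref{basedecomp}. If you wanted to pursue the lower-dimensional bookkeeping, the cleaner route is the one implicit in Remark \ref{rdualperm}: \w{\cfbase{\vrp}} is itself a cone on its boundary with apex \w[,]{d_{i_{1}}\dotsc d_{i_{k}}|} and it is only the boundary \w{\partial\cfbase{\vrp}} whose $j$-simplices correspond bijectively to ordered set partitions into \w{j+2} nonempty blocks, i.e.\ to the boundary complex of the simplicial polytope dual to the permutohedron.
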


\begin{proof}
The \wwb{k-1}simplices of the base complex \w{\cfbase{\vrp}} 
may be listed by decomposing \w{d_\vrp} in all possible atomic
(mostly non-standard) forms, so as a composite of $k$ face maps with vertical bars 
inserted to the right of each map (but not at the left end). 
If we let the standard
representation (in ascending order) \w{d_{i_{1}}|d_{i_{2}}|\dots|d_{i_{k}}|}
correspond to the identity permutation, we have a faithful, effective,
and transitive action of the symmetric group \w{\Sigma_{k}} on the 
\wwb{k-1}simplices of \w[,]{\cfbase{\vrp}} in which any adjacent
transposition \w{(j,j+1)} takes 
\w{d_{\ell^{1}}|\dots|d_{\ell^j}|d_{\ell^{j+1}}|\dots|d_{\ell^{k}}|}
to 
\w[,]{d_{\ell^{1}}|\dots|d_{(\ell')^j}|d_{(\ell')^{j+1}}|\dots|d_{\ell^{k}}|}
by applying the simplicial identity 
\w[.]{d_{\ell^j}\circ d_{\ell^{j+1}}=d_{(\ell')^j}\circ d_{(\ell')^{j+1}}}
This shows that the base complex \w{\cfbase{\vrp}} is indeed the dual of the
triangulated $k$-permutohedron.
	
To see that \w[,]{\cF{\vrp}=\cone{\cfbase{\vrp}}} note that if we
compose all the factors in any representation
\w{|d_{i_{j_{1}}}|\dots|\dots|d_{i_{j_{k}}}} as above, we obtain the
same map \w[,]{d_{\vrp}} which implies that the initial vertex of each
$k$-simplex $\sigma$ of \w{\cF{\vrp}} is precisely the cone point \w[,]{c_{\vrp}}
explicitly \w[.]{|d_{i_{1}}\dots d_{i_{k}}} 
\end{proof}

\begin{remark}\label{rdualperm}
The terminal vertex of each simplex of \w{\cfbase{\vrp}} is 
\w[,]{d_{i_{1}}\dots d_{i_{k}}|} so the base complex is again the cone
on its own boundary.
\end{remark}

%
%
\begin{cor}\label{cdualperm}
For any flag $\vrp$ with \w[,]{|\vrp|=k} the boundary
\w{\partial\cF{\vrp}} of the flag complex is a combinatorial
\wwb{k-1}sphere. 
\end{cor}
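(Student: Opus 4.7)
The plan is to decompose $\partial\cF{\vrp}$ into two combinatorial $(k-1)$-disks glued along their common $(k-2)$-sphere boundary, using the cone structure already established in Lemma \ref{ldualperm} and Remark \ref{rdualperm}.

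First, I would invoke Lemma \ref{ldualperm} to write $\cF{\vrp}$ as the combinatorial cone $c_{\vrp}\ast\cfbase{\vrp}$ with apex $c_{\vrp}$. Since Remark \ref{rdualperm} says that $\cfbase{\vrp}$ is itself a cone on its own boundary $\partial\cfbase{\vrp}$ (with apex the terminal vertex $d_{i_{1}}\dotsm d_{i_{k}}|$), the base complex is a combinatorial $(k-1)$-disk, and therefore $\cF{\vrp}=c_{\vrp}\ast\cfbase{\vrp}$ is a combinatorial $k$-disk.

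Next, I would identify the ``top'' piece $\cftop{\vrp}$ explicitly. By Definition \ref{dboundary}, $\cftop{\vrp}$ is spanned by the faces $\partial_{i}\sigma$ for $\sigma$ a $k$-simplex of $\cF{\vrp}$ and $1\le i\le k$. Every $k$-simplex of $\cF{\vrp}$ has $c_{\vrp}$ as its initial vertex, and its unique face not containing $c_{\vrp}$ is $\partial_{0}\sigma$, which lies in $\cfbase{\vrp}$. Hence for $i\ge 1$ the face $\partial_{i}\sigma$ contains $c_{\vrp}$, so $\cftop{\vrp}$ consists of exactly those simplices of $\partial\cF{\vrp}$ through $c_{\vrp}$; combinatorially, $\cftop{\vrp}=c_{\vrp}\ast\partial\cfbase{\vrp}$. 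Being a cone on the $(k-2)$-sphere $\partial\cfbase{\vrp}$ (the boundary of the dual $k$-permutohedron), $\cftop{\vrp}$ is a second combinatorial $(k-1)$-disk.

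Finally, since $\partial\cF{\vrp}=\cfbase{\vrp}\cup\cftop{\vrp}$ and the intersection $\cfbase{\vrp}\cap\cftop{\vrp}$ is precisely $\partial\cfbase{\vrp}$ (the common ``equator''), we are gluing two combinatorial $(k-1)$-disks along their common boundary $(k-2)$-sphere, which yields a combinatorial $(k-1)$-sphere. The only substantive point to verify carefully is that $\cftop{\vrp}\cap\cfbase{\vrp}$ really coincides with $\partial\cfbase{\vrp}$ (no extra faces get added or identified); this follows from the normal-form description of the simplices of $\cF{\vrp}$ given in Definition \ref{faceguy}, since a simplex lies in both pieces exactly when it contains neither $c_{\vrp}$ nor a ``top bar'' at the extreme right. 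This identification is the only bookkeeping step, and is routine from the combinatorics of the flag complex.
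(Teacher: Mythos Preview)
Your approach differs from the paper's, which invokes convex polytope theory: since $\cfbase{\vrp}$ is a $(k-1)$-polytope (the dual permutohedron), its cone $\cF{\vrp}$ is a $k$-polytope, and the boundary of any convex polytope is a combinatorial sphere. Your two-disk gluing is more elementary, though you still need $\partial\cfbase{\vrp}$ to be a $(k-2)$-sphere, which is essentially the same polytope fact one dimension lower (or could be arranged as an induction on $k$).

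There is, however, a genuine gap in your identification $\cftop{\vrp}=c_{\vrp}\ast\partial\cfbase{\vrp}$. By Definition \ref{dboundary}, $\cftop{\vrp}$ is spanned by all $\partial_i\sigma$ with $i\ge 1$; writing $\sigma=c_{\vrp}\ast\rho$ with $\rho=\partial_0\sigma$ a $(k-1)$-simplex of $\cfbase{\vrp}$, one has $\partial_i\sigma=c_{\vrp}\ast\partial_{i-1}\rho$, and as $\rho$ and $i$ vary these sweep out $c_{\vrp}$ joined with \emph{every} $(k-2)$-face of $\cfbase{\vrp}$, interior ones included. Thus $\cftop{\vrp}$ is $c_{\vrp}$ joined with the full $(k-2)$-skeleton of $\cfbase{\vrp}$, and $\cfbase{\vrp}\cap\cftop{\vrp}$ is that $(k-2)$-skeleton, not $\partial\cfbase{\vrp}$. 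Already for $k=2$ and $\vrp=(0<1)$ (Figure \ref{fig1}), the edge $|d_0d_1|$ is $\partial_1$ of both $2$-simplices and hence lies in $\cftop{\vrp}$; the resulting $\partial\cF{\vrp}$ is the full $1$-skeleton (four vertices, five edges), which is not a circle. The paper's polytope argument is tacitly identifying $\partial\cF{\vrp}$ with the polytope boundary of the cone, which is exactly your two-disk picture $\cfbase{\vrp}\cup\bigl(c_{\vrp}\ast\partial\cfbase{\vrp}\bigr)$; so your geometry is the right one, but it does not literally coincide with the simplicial subset named $\partial\cF{\vrp}$ in Definition \ref{dboundary}.
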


\begin{proof}
The permutohedron \w{\Pe{k}} on $k$ elements is a \wwb{k-1}polytope
\wh that is, a convex \wwb{k-1}dimensional polyhedron in
\ww{\bR^{k-1}} (cf.\ \cite{GRosA}) so its dual \w{\cfbase{\vrp}} is 
also a \wwb{(k-1)}polytope (see \cite[\S 3.4]{GrunC}). Thus the cone
\w{\cF{\vrp}=\cone{\cfbase{\vrp}}} is a $k$-polytope, and its boundary
\w{\partial\cF{\vrp}} is combinatorially equivalent to a \wwb{k-1}sphere. 
\end{proof}

We can now describe the decomposition of the mapping spaces in $\QQ$:

\begin{lemma}\label{lqdec}
The space \w{\map_{\QQ}(\bm,\bj)} is empty if \w[,]{m<j} discrete for
\w[,]{m\neq n+2} and for \w{0 \leq k \leq n+1}
$$
\map_{\QQ}(\bnpp,\bnmkpo)~\cong~\bigvee_{\vrp\in\Psi^{n+2}_{n-k+1}}~\cF{\vrp}~,
$$
\noindent where the vertex \w{|d_{i_{1}}d_{i_{2}}\dots d_{i_{k}}} is
chosen as basepoint in each flag complex.
\end{lemma}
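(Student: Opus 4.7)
The plan is to verify the decomposition clause-by-clause, corresponding to the three regimes of the statement. For $m < j$, the category $\hD{n+2}$ has no morphisms $\bm \to \bj$ at all, since the only new arrow $\bdz{}: \bnpp \to \bnp$ decreases the dimension index by one. Hence neither $\Fs\hD{n+2}$ nor $\co{\tD{n+1}}$ contributes any such simplex, and the pushout $\QQ$ satisfies $\map_\QQ(\bm, \bj) = \emptyset$. For $m \neq n+2$, both objects lie in $\Obj\tD{n+1}$, and no morphism $\bm \to \bj$ in $\hD{n+2}$ uses $\bnpp$; so the inclusion $\map_{\Fs\tD{n+1}}(\bm, \bj) \hookrightarrow \map_{\Fs\hD{n+2}}(\bm, \bj)$ is an identity, and pushing out along $r: \Fs\tD{n+1} \to \co{\tD{n+1}}$ yields $\map_\QQ(\bm, \bj) \cong \co{\tD{n+1}}(\bm, \bj)$, which is discrete.

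For the main case $m = n+2$, the plan is to use the parenthesization calculus from \S \ref{ssq}. A non-degenerate $k$-simplex of $\map_{\Fs\hD{n+2}}(\bnpp, \bnmkpo)$ is a $(k+1)$-fold nested bracketing of a composable path ending with $\bdz{}$. Passing to $\QQ$, any bracket that does not wrap $\bdz{}$ lies entirely within $\Fs\tD{n+1}$, so the pushout identifies it with its composite in $\co{\tD{n+1}}$, collapsing the bracket level and rendering the simplex degenerate. Therefore the surviving non-degenerate $k$-simplices take the canonical form $d_{I_0}|d_{I_1}|\cdots|d_{I_k}|$ with $k+1$ bars and implicit $\bdz{}$ at the right.

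To read off the flag decomposition, I associate to each such simplex its underlying morphism: rewriting the concatenation $d_{I_0} d_{I_1} \cdots d_{I_k}$ in ascending normal form using the simplicial identities of $\tD{n+1}$ yields a unique flag $\vrp = (i_1 < \cdots < i_k) \in \Psi^{n+2}_{n-k+1}$. By Definition \ref{faceguy}, the sub-simplicial set consisting of all simplices with the fixed flag $\vrp$ is precisely $\cF{\vrp}$, and distinct flags give complexes that meet only at their cone points. The cone point $c_\vrp = |d_{i_1}\cdots d_{i_k}$ equals in $\QQ$ the composition $d_{(i_1,\ldots,i_k)} \circ \bdz{}$, where $d_{(i_1,\ldots,i_k)}$ is the single 0-simplex of the discrete mapping space $\map_\QQ(\bnp, \bnmkpo)$ obtained by identifying the sub-path $(d_{i_1}, \ldots, d_{i_k}) \in \Fs\tD{n+1}$ with its composite in $\co{\tD{n+1}}$.

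The main obstacle I expect is the last step: justifying rigorously that the composition $d_{(i_1,\ldots,i_k)} \circ \bdz{}$ in $\QQ$ coincides \emph{strictly} with the basepoint 0-simplex $0 \in \map_\QQ(\bnpp, \bnmkpo)$, not merely as a $\pi_0$-equivalence. That is, one must show that the relation $d_i \circ \bdz{} = 0$ of $\hD{n+2}$ is enforced at the level of 0-simplices in $\QQ$ via the pushout, so that all cone points across different flags are identified to a common basepoint—yielding the wedge. This requires a careful argument using the discreteness of $\map_\QQ(\bnp, \bnmkpo)$, functoriality of the composition map in the pushout, and the pointed enrichment; with this identification in hand, the canonical map $\bigvee_\vrp \cF{\vrp} \to \map_\QQ(\bnpp, \bnmkpo)$ is a bijection on simplices by Steps 1--3 above.
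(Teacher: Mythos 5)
Your treatment of the easy cases is correct and matches the paper's (brief) argument: for $m<j$ there are no morphisms in $\hD{n+2}$, and for $m\neq n+2$ the top map $\Fs i$ in the pushout square is an isomorphism on the relevant hom-spaces, so the pushout collapses $\map_\QQ(\bm,\bj)$ to the discrete space $\tD{n+1}(\bm,\bj)$. Your description of the main case via the parenthesization calculus of \S\ref{ssq} — normal-form strings with a flag read off by concatenating and normalizing the $d$'s — is also the paper's approach, and matches the one-sentence proof ("these summands are just the flag complexes, since the composite obtained by omitting the bars… always equals $d_{i_1}d_{i_2}\dots d_{i_k}$").

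However, the "main obstacle" you identify is a wrong turn, and the plan you sketch for it would fail. You propose to show that the composite $d_{(i_1,\dots,i_k)}\circ\bdz{}$ equals the basepoint $0$ \emph{strictly} in $\QQ$, i.e., that the relation $d_i\circ\bdz{}=0$ of $\hD{n+2}$ is imposed at the level of $0$-simplices by the pushout. It is not, and it must not be: $\QQ = \Fs\hD{n+2}\cup_{\Fs\tD{n+1}}\co{\tD{n+1}}$ only collapses the $\tD{n+1}$-part, precisely so that the relation $d_i\circ\bdz{}=0$ holds only up to (higher) homotopy in $\QQ$. If that relation became strict, the inductive lifting problem of \S\ref{sipq} would be vacuous and the higher operations of Definition~\ref{dhhoa} could never be nontrivial. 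In particular the cone points $c_\vrp$ for distinct flags are \emph{not} identified with each other nor with the zero morphism in $\QQ$. What actually makes the wedge decomposition work is simply that every non-basepoint simplex of $\map_\QQ(\bnpp,\bnmkpo)$ has a well-defined underlying composite in $\tD{n+1}(\bnp,\bnmkpo)$ (the iterated face map obtained by erasing the bars and $\bdz{}$), that this composite is preserved by faces and degeneracies, and that each pointed flag complex $\cF{\vrp}$ already contains the genuine basepoint $\ast$ of the pointed mapping space; distinct flag complexes therefore share only $\ast$, and the union over all flags exhausts the mapping space. The phrase in the lemma about "choosing" $c_\vrp$ as a basepoint is a convention made for the later use of $\partial\cF{\vrp}$ as a pointed $k$-sphere in Definition~\ref{dhhoa}, not an assertion that $c_\vrp$ coincides with the zero morphism; you should not try to prove such a coincidence.
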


\begin{proof}
In the pushout  \eqref{eqdksspec} 
defining $\QQ$, for entries with \w[,]{m \neq n+2} the top map is an isomorphism, 
hence the bottom map is as well, so \w{\map_{\QQ}(\bm,\bj)} is discrete.

If \w[,]{m > j} by construction, the pointed mapping space 
\w{\map_{\QQ}(\bm,\bj)} decomposes into a wedge
of natural summands corresponding to distinct maps in \w{\tD{n+1}} \wwh
that is, iterated face maps \w[.]{\bm\to\bj} However, these summands are just
the flag complexes, since the composite
obtained by omitting the bars from any decomposition of $\vrp$ always
equals \w[.]{d_{i_{1}}d_{i_{2}}\dots d_{i_{k}}}
\end{proof}

\begin{remark}\label{rfirstflag}
Note that the basic atomic $k$-simplex 
\w{\tau_{k}=|d_{0}|d_{0}|\dots|d_{0}|} (cf.\ \S \ref{ssq}) is a top
dimensional simplex of \w{\cF{0<1<2<\dots<k-1}} in each case, 
while \w{\parz\tau_{k}} decomposes (in the simplicial enrichment of
$\QQ$) as \w{\tau_{k-1}} followed by (the degeneracy of) the 
$0$-simplex \w[.]{d_{0}:\bjp\to\bj} 
We will see later, that with appropriate initial choices, we can send
any top dimensional simplex other than $\tau_k$ to zero, and one only really
needs to extend over this single simplex, imposing this one
relation, at each stage of the induction.

Given a flag \w[,]{\vrp=\{0\leq i_{1}<i_{2}< \dots <i_{k}\leq n+1\}}
write \w[.]{\vrp^{\hat j}:=
\{0\leq i_{1}<i_{2}<\dots i_{j-1}<i_{j+1}\dots<i_{k}\leq n+1\}} 
For \w{j<\ell} write 
\w[.]{\vrp^{\hat j, \hat \ell}:=\{0\leq i_{1}< \dots i_{j-1}<i_{j+1}\dots
i_{\ell-1}<i_{\ell+1}\dots<i_{k}\leq n+1\}}
Let \w{d_{i_{j}-j+1} \circ \cF{\vrp^{\hat j}}} denote the
subcomplex of \w{\cfbase{\vrp}} spanned by simplices of the form
\w[,]{d_{i_{j}-j+1}|\sigma} where \w{\sigma} represents a simplex of
\w[.]{\cF{\vrp^{\hat j}}}
\end{remark}

\begin{lemma}\label{basedecomp}
Each component \w{\cF{\vrp}} of the mapping space
\w{\map_{\QQ}(\bnpp,\bnmkpo)} has
\w[,]{\parz \cF{\vrp}~\cong~\bigcup_{j} d_{i_{j}-j+1}\circ
  \cF{\vrp^{\hat j}}}
where each \wwb{k-2}simplex not in the boundary is shared
by only two \wwb{k-1}simplices, and if \w[:]{j<\ell}
$$
d_{i_{j} -j+1} \circ \cF{\vrp^{\hat j}}
~\bigcap~d_{i_{\ell} -\ell+1} \circ \cF{\vrp^{\hat \ell}}~=~
d_{i_{j} -j+1} d_{i_{\ell} -\ell+1}\circ\cF{\vrp^{\hat j,\hat \ell}}~.
$$	
\end{lemma}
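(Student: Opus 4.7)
The plan is to analyze the combinatorial structure of $\cfbase{\vrp}$ by exploiting the permutation bookkeeping from Lemma~\ref{ldualperm}: the top $(k-1)$-simplices of $\cfbase{\vrp}$ are in faithful, transitive bijection with $\Sigma_k$ through atomic decompositions of $d_\vrp$. The first step is to identify, for each $\pi\in\Sigma_k$, the leftmost face map of the corresponding atomic decomposition. If we pull $d_{i_{\pi(1)}}$ all the way to the left using the identity $d_a d_b=d_{b-1}d_a$ (for $a<b$), it must pass the $\pi(1)-1$ smaller maps $d_{i_\ell}$ with $\ell<\pi(1)$; each pass reduces its subscript by one, so the leftmost index becomes $i_{\pi(1)}-\pi(1)+1$.

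Having established this, I would partition the $k!$ top simplices into $k$ classes by $j:=\pi(1)$, each of size $(k-1)!$ and sharing leftmost face map $d_{i_j-j+1}$. For a fixed $j$, extracting the leading $d_{i_j-j+1}$ leaves an atomic decomposition of $d_{i_1}\dotsc\widehat{d_{i_j}}\dotsc d_{i_k}=d_{\vrp^{\hat j}}$, corresponding to a permutation of $\{1,\dotsc,k\}\setminus\{j\}$, hence a top $(k-1)$-simplex of $\cF{\vrp^{\hat j}}$. This identifies $d_{i_j-j+1}\circ\cF{\vrp^{\hat j}}$ with the simplicial subcomplex of $\cfbase{\vrp}$ generated by those top simplices with $\pi(1)=j$. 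Since every simplex of $\cfbase{\vrp}$ is a face of some top simplex and the $k$ classes exhaust the top simplices, this yields the union claim $\parz\cF{\vrp}=\bigcup_j d_{i_j-j+1}\circ\cF{\vrp^{\hat j}}$.

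For the intersection formula, the key observation is that a simplex $\tau$ of $\cfbase{\vrp}$ whose standard-form leftmost chunk is $d_{\alpha_1}d_{\alpha_2}\dotsc d_{\alpha_m}$ (with $\alpha_1<\dotsc<\alpha_m$) lies in $d_a\circ\cF{\vrp'}$ precisely when $a$ is attainable by pulling some $d_{\alpha_r}$ out of the chunk, i.e., $a=\alpha_r-r+1$ for some $r$. Applying this criterion twice, membership in the intersection of $d_{i_j-j+1}\circ\cF{\vrp^{\hat j}}$ and $d_{i_\ell-\ell+1}\circ\cF{\vrp^{\hat\ell}}$ is equivalent to being able to pull out \emph{both} $d_{i_j}$ and $d_{i_\ell}$ from the leftmost chunk, which by the same pull-to-left bookkeeping matches the image of $\cF{\vrp^{\hat j,\hat\ell}}$ under prepending $d_{i_j-j+1}d_{i_\ell-\ell+1}$. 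Finally, the assertion that each interior $(k-2)$-simplex is shared by exactly two top $(k-1)$-simplices follows from Lemma~\ref{ldualperm} and Corollary~\ref{cdualperm}: $\cfbase{\vrp}$ is a triangulated $(k-1)$-dimensional convex polytope, hence a PL-manifold with boundary, whose codimension-one interior faces each meet precisely two facets.

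The main obstacle will be keeping the index arithmetic straight across iterated applications of the simplicial identity, and in particular verifying for the intersection that pulling $d_{i_j}$ and then $d_{i_\ell}$ (with $j<\ell$) past their lower neighbors produces exactly the leading pair $d_{i_j-j+1}d_{i_\ell-\ell+1}$: once $d_{i_j}$ has been extracted, $d_{i_\ell}$ has only $\ell-2$ rather than $\ell-1$ smaller face maps remaining to pass, but is itself already in position $\ell-1$ of the residual composite, so the net shift is still $\ell-1$. A secondary issue is checking that the prepending simplicial maps $\cF{\vrp^{\hat j}}\to\cfbase{\vrp}$ and $\cF{\vrp^{\hat j,\hat\ell}}\to\cfbase{\vrp}$ are embeddings, which reduces to the observation that distinct atomic decompositions remain distinct under the $\Sigma_k$-action.
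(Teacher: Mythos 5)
Your proof follows essentially the same route as the paper: both rest on the simplicial identity $d_{i_1}\dotsc d_{i_k}=d_{i_j-(j-1)}d_{i_1}\dotsc\widehat{d_{i_j}}\dotsc d_{i_k}$, the correspondence between atomic decompositions and permutations (keyed by which original factor is pulled to the front), and a second application of the same bookkeeping for the intersection.  Your explicit use of the polytope structure from Lemma~\ref{ldualperm} and Corollary~\ref{cdualperm} to justify the ``shared by exactly two facets'' clause is a reasonable clarification of something the paper's terse proof leaves implicit, and the embedding remark is fine.

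The step you single out as the main obstacle, however, is not resolved correctly.  After $d_{i_j}$ has been extracted, the residual composite $d_{\vrp^{\hat j}}=d_{i_1}\dotsc\widehat{d_{i_j}}\dotsc d_{i_k}$ still carries $d_{i_\ell}$ with its \emph{original} subscript; it sits in position $\ell-1$ and has exactly $\ell-2$ smaller-indexed face maps to its left.  Pulling it to the front therefore decrements the subscript by $\ell-2$, giving $d_{i_\ell-\ell+2}$, not $d_{i_\ell-\ell+1}$.  The shift is governed solely by how many smaller face maps $d_{i_\ell}$ crosses, not by which position it starts from, so the inference that ``the net shift is still $\ell-1$'' is not valid.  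Extracting in the opposite order first gives $d_{i_\ell-\ell+1}d_{i_j-j+1}$, and converting $d_bd_a$ to standard form via $d_bd_a=d_ad_{b+1}$ (for $a\le b$) again yields $d_{i_j-j+1}d_{i_\ell-\ell+2}$.  A quick check with $\vrp=(0<2)$, $j=1$, $\ell=2$ confirms this: the two facets $d_0|d_2|$ and $d_1|d_0|$ of $\cfbase{\vrp}$ share the single vertex $d_0d_2|$, and indeed $d_{i_j-j+1}d_{i_\ell-\ell+2}=d_0d_2$, whereas $d_{i_j-j+1}d_{i_\ell-\ell+1}=d_0d_1$ is not even a decomposition of $d_{\vrp}$.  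So a careful version of your argument lands on $d_{i_j-j+1}\,d_{i_\ell-\ell+2}\circ\cF{\vrp^{\hat j,\hat\ell}}$, differing from the printed formula by one in the second index; you should either present that corrected index and flag the discrepancy with the statement, or track down whatever normalization the paper intends, rather than force the $\ell-1$ shift, which the simplicial-identity bookkeeping simply does not support.
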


\begin{proof}
This is a straightforward calculation, based on the simplicial identity
$$
d_{i_{1}} d_{i_{2}}\dots d_{i_{k}}~=~
d_{i_{j}-(j-1)}d_{i_{1}}\dots d_{i_{j-1}} d_{i_{j+1}}\dots d_{i_{k}}~,
$$
\noindent since the face map \w{d_{i_{j}}} is moved forward past
\w{j-1} others, whose indices are always lower by assumption.
Permutations naturally break up according to those which move a fixed
term to the front, with each such piece a copy of a permutation group
on a set with one less element.  The same applies a second time to get
the intersection statement.
\end{proof}

\begin{mysubsection}{The inductive procedure}
\label{sipq}
Let \w{\QQ[\bnpp,\bj]} denote the full subsimplicial category of $\QQ$
which only contains the objects \w[.]{\bnpp,\bnp,\dots,\bj}
Note that any simplicial functor \w{\hX{k}:\QQ[\bnpp,\bnmkpo]\to\MOp}
(compatible with \w[)]{X:\tD{n+1} \to \MO} extends uniquely to a
simplicial functor \w{:\parz\QQ[\bnpp,\bnmk] \to\MOp}
(where the face is only taken in the last mapping space) by
Lemma \ref{basedecomp} (for each $\vrp$) and extends
by zero to \w{\cftop{\vrp}} (the rest of \w[).]{\partial\cF{\vrp}}
Together with \w[,]{\hX{k}} this yields a simplicial functor
\w{\tX^{k}:\partial\QQ[\bnpp,\bnmk] \to \MOp} (where again the
boundary is only taken in the last mapping space).

For any flag with \w[,]{|\vrp|=k+1} note that
\w{\tX^{k}\rest{\partial \cF{\vrp}}} sends \w{\cftop{\vrp}}
to the zero map. Since the target in \w{\MOp} is assumed to be a Kan
complex, we can instead consider the induced map \w{f_{\vrp}} from the
quotient 
\w[,]{\partial\cF{\vrp}/\cftop{\vrp}~\cong~\cfbase{\vrp}/\partial\cfbase{\vrp}}
which is a  $k$-sphere by Lemma \ref{ldualperm}.  Moreover, the adjoint
map \w{\tilde{f}_{\vrp}:\Sigma^{k}\bV{n+2} \to V_{n-k}} is null
homotopic precisely when \w{f_{\vrp}} is such,
or equivalently, when \w{\tX^{k}\rest{\partial \cF{\vrp}}} has a filler
to all of \w[.]{\cF{\vrp}}
\end{mysubsection}

%
%
\begin{prop}\label{pnullextend}
A simplicial functor \w{\hX{k}:\QQ[\bnpp,\bnmkpo]\to\MOp}
(compatible with a fibrant \w[)]{X:\tD{n+1} \to \MO} extends to a
simplicial functor  \w{\hX{k+1}:\QQ[\bnpp,\bnmk] \to \MOp} if
and only if  for each flag of length \w{k+1} the induced map
\w{f_{\vrp}} is null homotopic.
\end{prop}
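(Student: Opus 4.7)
The plan is to reduce the extension problem to independent flag-by-flag obstructions using the wedge decomposition of Lemma \ref{lqdec}, then apply standard cone obstruction theory in the pointed Kan complex target.

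By Lemma \ref{lqdec}, the only mapping space of $\QQ[\bnpp,\bnmk]$ which is not already present in $\QQ[\bnpp,\bnmkpo]$ (or already discrete and hence forced by $X\colon\tD{n+1}\to\MO$) is
\[
\map_{\QQ}(\bnpp,\bnmk)~\cong~\bigvee_{\vrp\in\Psi^{n+2}_{n-k}}\cF{\vrp},
\]
the wedge indexed by flags of length $k+1$. Since the wedge is a pointed coproduct of simplicial sets, the extension problem decouples flag-by-flag: for each such $\vrp$, one must extend the already-constructed map $\tX^{k}|_{\partial\cF{\vrp}}\colon\partial\cF{\vrp}\to Y_{\vrp}$ over all of $\cF{\vrp}$, where $Y_{\vrp}:=\map_{\MOp}(\tX(\bnpp),\tX(\bnmk))$ is a Kan complex by fibrancy of $\MOp$.

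Fix such a flag. By Lemma \ref{ldualperm}, $\cF{\vrp}$ is the combinatorial cone on the $k$-polytope $\cfbase{\vrp}$, hence is itself a $(k+1)$-disk; by Corollary \ref{cdualperm}, $\partial\cF{\vrp}=\cfbase{\vrp}\cup\cftop{\vrp}$ is a combinatorial $k$-sphere. Identifying top-dimensional simplices of $\cF{\vrp}$ with joins $\sigma*c_{\vrp}$ (with $\sigma$ a top simplex of $\cfbase{\vrp}$), one sees that $\cftop{\vrp}$ is the join $\partial\cfbase{\vrp}*\{c_{\vrp}\}$, hence a cone and in particular contractible. Since $\tX^{k}$ is by construction identically the basepoint on $\cftop{\vrp}$ (see \S\ref{sipq}), collapsing this contractible subcomplex yields a pointed weak equivalence
\[
\partial\cF{\vrp}~\xrightarrow{\ \simeq\ }~\partial\cF{\vrp}/\cftop{\vrp}~\cong~\cfbase{\vrp}/\partial\cfbase{\vrp}~\cong~S^{k},
\]
under which $\tX^{k}|_{\partial\cF{\vrp}}$ corresponds precisely to $f_{\vrp}$.

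Standard obstruction theory in the Kan complex $Y_{\vrp}$ then finishes the argument: a map from $\partial\cF{\vrp}\cong S^{k}$ extends over the $(k+1)$-disk $\cF{\vrp}$ iff its class in $\pi_{k}(Y_{\vrp})$ vanishes, iff $f_{\vrp}$ is null homotopic, iff (by adjunction) $\tilde{f}_{\vrp}\colon\Sigma^{k}\bV{n+2}\to V_{n-k}$ is null homotopic. Assembling over all flags of length $k+1$ in $\Psi^{n+2}_{n-k}$ yields both directions of the stated equivalence. The only mildly subtle point is the identification $\cftop{\vrp}\cong\partial\cfbase{\vrp}*\{c_{\vrp}\}$, but this is immediate once the cone-point is placed first in each join, so that $\partial_{0}$ of a top simplex recovers $\sigma\in\cfbase{\vrp}$ and $\partial_{i}$ for $i\geq 1$ lands in $\partial\cfbase{\vrp}*\{c_{\vrp}\}$ as required.
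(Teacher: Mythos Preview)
Your proof is correct and follows essentially the same route as the paper's: reduce to a flag-by-flag problem, identify $\cF{\vrp}$ as a disk with boundary sphere $\partial\cF{\vrp}$, and invoke the standard extension criterion in the Kan complex target. Your treatment is in fact more explicit than the paper's in justifying the quotient identification $\partial\cF{\vrp}/\cftop{\vrp}\cong S^{k}$ via the join description $\cftop{\vrp}\cong\partial\cfbase{\vrp}\ast\{c_{\vrp}\}$.

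One point the paper makes explicit which you leave implicit: after filling each $\cF{\vrp}$, one must check that the resulting assignment is a simplicial \emph{functor}, not merely a map of simplicial sets on each hom-space. The paper handles this by observing that every simplex of $\cF{\vrp}\setminus\partial\cF{\vrp}$ is indecomposable (it carries a leftmost bar, in the notation of \S\ref{ssq}), so its value may be freely prescribed; the decomposable simplices all lie in $\cfbase{\vrp}\subset\partial\cF{\vrp}$, where the values were already forced by $\hX{k}$ and $X$ via Lemma \ref{basedecomp}. Your phrase ``the extension problem decouples flag-by-flag'' is correct, but it would strengthen the argument to note explicitly that no new compositional constraints arise on the interior simplices.
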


\begin{proof}
If there is an extension, the fact that the full \w{\cF{\vrp}} serves
as a cone on its boundary $k$-sphere means the extension serves as a
null homotopy of the restriction to \w[,]{\partial \cF{\vrp}}
thereby implying that \w{f_{\vrp}} is also null homotopic.
	
Conversely, if \w{f_{\vrp}} is null homotopic, given the choice of a
null homotopy  $H$ for \w{\widetilde X^{k}\rest{\partial\cF{\vrp}}}
and an $i$-simplex
\w{\sigma \in \cF{\vrp} \setminus\partial\cF{\vrp}} (with \w{i=k}
or \w[),]{k+1} $H$ determines an $i$-simplex \w{\hX{k+1}(\sigma)\in
\map_\M(\widetilde X^{k}(\bnpp),\widetilde X^{k}(\bnmk))}
since the target is a Kan complex.  Note that any such $i$-simplex
$\sigma$ is indecomposable, so \w{\hX{k+1}} so defined (and
extending \w[)]{\hX{k}} is indeed a simplicial functor.
\end{proof}

\begin{defn}\label{dhhoa}
Given a flag \w[,]{\vrp=(0 \leq i_{1}< \dots i_{k+1} \leq n+1)} with
corresponding map 
\w[,]{d_{\vrp}=d_{i_{1}}\dots d_{i_{k+1}}\bdz{}:\bnpp\to\bnmk}
by Corollary \ref{cdualperm} the boundary \w{\partial\cF{\vrp}} of the
flag complex is a simplicial  $k$-sphere. Therefore, the adjoint of
\w{f_{\vrp}=\widetilde X^{k}\rest{\partial\cF{\vrp}}:\partial K_{\vrp}\to
\map_{\M}(\bV{n+2},V_{n-k})}
may be thought of as a map
\w{\tilde{f}_{\vrp}:\Sigma^{k}\bV{n+2}\to V_{n-k}} (after identifying 
\w{\cftop{\vrp}} with the cone point \w[).]{c_{\vrp}} 
We define the \wwb{k+1}\emph{st order higher homotopy operation}
obstruction to realizing $\Lambda$ to be the subset:
$$
\llrr{\Psi^{n+2}_{n-k}}~\subseteq~
\left[\bigvee_{\vrp\in \Psi^{n+2}_{n-k}}~\Sigma^{k}\bV{n+2},~V_{n-k}\right]~,
$$
\noindent consisting of all homotopy classes:
\begin{myeq}\label{eqahhob}
\lra{f^{n+2}_{n-k}}~:=~
\left[\bigvee_{\vrp\in \Psi^{n+2}_{n-k}}~\tilde{f}_{\vrp}\right]~\in~
\left[\bigvee_{\vrp\in \Psi^{n+2}_{n-k}}~\Sigma^{k}\bV{n+2},~V_{n-k}\right]
\end{myeq}
\noindent obtained by varying the inductively defined choice of
\w[.]{\hX{k}} Each such class \wref{eqahhob} is thus  a
\emph{value}, in the sense of  \S \ref{shho}, of the \wwb{k+1}st
order higher homotopy operation \w[.]{\llrr{\Psi^{n+2}_{n-k}}}
\end{defn}

\begin{thm}\label{thhoa}
Under the assumptions of \S \ref{sudks}, the homotopy class
\w{\lra{f^{n+2}_{n-k}}} of \wref{eqahhob} vanishes if and only if the
restriction of the lifting \w{\hX{k}} in Diagram \wref{eqdksspec}
to \w{\QQ[\bnpp,\bnmk]} exists.
\end{thm}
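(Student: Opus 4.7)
The plan is to deduce this theorem essentially as a repackaging of Proposition \ref{pnullextend} via the decomposition of mapping spaces in $\QQ$ provided by Lemma \ref{lqdec}. The extension $\hX{k+1}$ differs from $\hX{k}$ only in the values on the new $(k{+}1)$-simplices in the mapping spaces $\map_{\QQ}(\bnpp,\bnmk)$, and these mapping spaces break up as a wedge $\bigvee_{\vrp\in\Psi^{n+2}_{n-k}}\cF{\vrp}$ indexed by the flags of length $k+1$. So the question of extension reduces, flag by flag, to filling the boundary of each $\cF{\vrp}$.

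First, I would unpack the definitions from \S \ref{sipq}. Given $\hX{k}$ on $\QQ[\bnpp,\bnmkpo]$, the previously constructed zero-extension $\tX^{k}$ on $\partial\QQ[\bnpp,\bnmk]$ (the ``boundary'' taken only in the last mapping space) is canonical: by Lemma \ref{basedecomp} it is forced on $\cfbase{\vrp}$ by the compatibility with $X$ and the restriction of $\hX{k}$ to the lower-dimensional flags $\vrp^{\hat{j}}$, and by construction it sends $\cftop{\vrp}$ to zero. Since $\MOp$ is a Kan complex, collapsing $\cftop{\vrp}$ to the cone point $c_{\vrp}$ gives an induced map $f_{\vrp}:\partial\cF{\vrp}/\cftop{\vrp}\to\map_{\M}(\bV{n+2},V_{n-k})$ with domain a simplicial $k$-sphere, by Corollary \ref{cdualperm}. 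Its adjoint is precisely $\tilde{f}_{\vrp}:\Sigma^{k}\bV{n+2}\to V_{n-k}$.

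Next I would apply Proposition \ref{pnullextend}: the extension $\hX{k+1}$ on all of $\QQ[\bnpp,\bnmk]$ exists if and only if each $f_{\vrp}$ (equivalently each $\tilde{f}_{\vrp}$) is null-homotopic. Since the mapping space $\map_{\QQ}(\bnpp,\bnmk)$ is the wedge of the flag complexes (Lemma \ref{lqdec}), the wedge class
\[
\lra{f^{n+2}_{n-k}}~=~\left[\bigvee_{\vrp\in\Psi^{n+2}_{n-k}}~\tilde{f}_{\vrp}\right]~\in~\left[\bigvee_{\vrp\in\Psi^{n+2}_{n-k}}\Sigma^{k}\bV{n+2},~V_{n-k}\right]
\]
vanishes if and only if each individual $[\tilde{f}_{\vrp}]$ vanishes. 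Combining the two equivalences yields the theorem.

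The only mildly subtle point, and the one I would spell out carefully, is the reverse direction: if $\lra{f^{n+2}_{n-k}}$ is zero, then by the wedge decomposition each $\tilde{f}_{\vrp}$ is individually null-homotopic, and a choice of null-homotopies determines, via Proposition \ref{pnullextend}, the missing simplices of $\hX{k+1}$ in each flag component independently. I would note that the ``value'' $\lra{f^{n+2}_{n-k}}$ depends on the previously chosen lifting $\hX{k}$ and the choice of zero-extension on $\cftop{\vrp}$; the hypothesis of the theorem fixes $\hX{k}$, so there is no ambiguity. The forward direction is immediate: any extension $\hX{k+1}$ furnishes, on each $\cF{\vrp}$, an explicit null-homotopy of $\tX^{k}\rest{\partial\cF{\vrp}}$, hence of each $\tilde{f}_{\vrp}$, hence of the wedge.
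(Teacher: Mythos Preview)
Your proposal is correct and takes essentially the same approach as the paper: the paper's entire proof is the single line ``This follows by induction from Proposition \ref{pnullextend},'' and your argument is precisely an unpacking of that application of Proposition \ref{pnullextend} together with the wedge decomposition of Lemma \ref{lqdec}. Your version is more explicit about why the vanishing of the wedge class $\lra{f^{n+2}_{n-k}}$ is equivalent to the simultaneous vanishing of the individual $[\tilde{f}_{\vrp}]$, but this is exactly the content the paper leaves implicit.
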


\begin{proof}
This follows by induction from Proposition \ref{pnullextend}.
\end{proof}

%
%
\begin{cor}\label{chhoa}
The last ($n$-th order) higher homotopy  operation
\w{\llrr{\Psi^{n+2}_{0}}} is the (final) obstruction to extending the
\wwb{n+1}truncated simplicial object \w{\qVd{n+1}}  to a
rectification of \w[,]{\tVd{n+2}} and thus to a realization of
\w[.]{\tau_{n+2}\Gd}
\end{cor}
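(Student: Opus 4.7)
The plan is to chain Theorem~\ref{thhoa} inductively over $k=0,1,\ldots,n$, and then to invoke Remark~\ref{stepred} to convert the resulting strict simplicial functor into the desired truncated CW object. First I would observe that the initial lift \w{\hX{0}} constructed in \S\ref{sudks} already packages the lax functor $\hVd{n+2}$ together with a choice of nullhomotopies $H_i$ for the composites $d_i\circ\bdz{\bV{n+2}}$, and so serves as the starting point of the induction, providing a simplicial functor on the subcategory \w{\QQ[\bnpp,\bnp]} of $\QQ$.

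Next, at each stage $k$, Theorem~\ref{thhoa} asserts that a lift \w{\hX{k}:\QQ[\bnpp,\bnmkpo]\to\MOp} extends to \w{\hX{k+1}:\QQ[\bnpp,\bnmk]\to\MOp} if and only if the value \w{\lra{f^{n+2}_{n-k}}\in\llrr{\Psi^{n+2}_{n-k}}} vanishes. Iterating up through $k=n$, where \w{\bnmk=\bze} and so \w{\QQ[\bnpp,\bze]} is all of $\QQ$, one obtains a full simplicial functor \w{\hX{n}:\QQ\to\MOp} precisely when each such obstruction can be arranged to vanish; the last of them in this chain is \w{\llrr{\Psi^{n+2}_{0}}}, and this is what is meant by calling it the final obstruction.

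Finally, I would unpack what such a simplicial functor means geometrically. It is a strict rectification of $\tVd{n+2}$: it turns the lax functor $\hVd{n+2}:\hD{n+2}\to\M$ into a strict one, so each composite $d_i\circ\bdz{\bV{n+2}}$ becomes genuinely zero for $0\leq i\leq n+1$ and $\bdz{\bV{n+2}}$ factors through $\mZ{n+1}\qVd{n+1}$. By Remark~\ref{stepred}, the pair consisting of $\qVd{n+1}$ and this rectified attaching map determines an \wwb{n+2}truncated CW object $\qVd{n+2}$ realizing $\tau_{n+2}\Gd$, as claimed. The only mildly delicate point is that the intermediate operations \w{\llrr{\Psi^{n+2}_{n-k}}} for $k<n$ also appear in the chain; but by Remark~\ref{rfirstflag}, judicious initial choices allow these to be made to vanish, which is why $\llrr{\Psi^{n+2}_{0}}$ deserves to be singled out as the truly final obstruction.
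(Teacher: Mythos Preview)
Your inductive chain via Theorem~\ref{thhoa} is the right skeleton, and the observation that $\QQ[\bnpp,\bze]=\QQ$ at the final stage is the key point. However, two issues deserve comment.

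First, the assertion that a simplicial functor $\hX{n}:\QQ\to\MOp$ ``turns the lax functor $\hVd{n+2}$ into a strict one'' overstates what you get directly: such a functor is an $\infty$-homotopy commuting lift in the sense of \S\ref{srp}, not literally a strict functor $\hD{n+2}\to\M$, and passing from one to the other requires the Boardman--Vogt/Dwyer--Kan--Smith rectification theorem cited there. The paper's own proof handles this point from a different angle: it observes that the induction of \S\ref{sipq}, which enlarges the indexing subcategory $\QQ[\bnpp,\bj]$ at each step, is \emph{prima facie} different from the Postnikov-tower induction of \S\ref{sipr}, but that at the last stage $k=n$ the two coincide (since the subcategory is now all of $\QQ$), so the final obstruction is exactly the DKS obstruction to the full lift in \wref{eqdksspec}---which is what is already known to govern rectification.

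Second, your final paragraph misreads Remark~\ref{rfirstflag}. That remark does not assert that the intermediate operations $\llrr{\Psi^{n+2}_{n-k}}$ for $k<n$ can always be made to vanish; it only says that with suitable choices the data reduces to a single relevant simplex per stage. The non-emptiness of $\llrr{\Psi^{n+2}_{0}}$ is established later (Corollary~\ref{calwaysmin}) and is not part of the present corollary. Here ``(final)'' simply means \emph{last in the sequence}; the statement is not claiming that the earlier obstructions are automatic.
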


\begin{proof}
The induction of \S \ref{sipq} (and Proposition \ref{pnullextend}) is 
different \emph{prima facie} from that of \S \ref{sipr}, since
we enlarge the indexing categories \w{\hD{n+2}\rest{[\bnpp,\bj]}} 
at each stage. However, this is no longer true at the last stage, 
when \w[,]{k=n} so our last obstruction is for the full extension to
\w[,]{\MOp} as in \S \ref{sipr}.
\end{proof}

%
%
\sect{Minimal higher homotopy operations}
\label{cmhho}

We would like to relate the higher homotopy operation obstructions of
Section \ref{chhoeo} to the cohomological obstructions of Section 
\ref{caqco}. Evidently, these two obstructions do not take values in the same
groups, so they can not be identified \emph{per se}. In order to compare
them, we define a homomorphism between the target groups, as follows:

\begin{mysubsection}{The correspondence homomorphism}
\label{sch}
By adjointness, there is a natural isomorphism
\w{[\Sigma^{n}\bV{n+2},X] \cong [\bV{n+2},\Omega^{n}X]}
and by \w{\bV{n+2} \in \MA}, we have a natural isomorphism
\w[.]{[\bV{n+2},Y] \cong \Hom_{\PAAlg}(\piA\bV{n+2},\piA Y)}
However, \w{\Omega^{n}\piA X := \piA \Omega^{n} X} so the combination gives
a natural isomorphism
$$
[\Sigma^{n}\bV{n+2},V_{0}]\cong\Hom_{\PAAlg}(\piA\bV{n+2},\Omega^{n}\piA
V_{0})~.
$$
\noindent Next, post-composition with the looped augmentation map
\w{\epsilon:G_{0}=\piA V_{0} \to \Lambda} induces a (surjective) homomorphism
$$
\Hom_{\PAAlg}(\bG{n+2},\Omega^{n}\piA V_{0})\to
\Hom_{\PAAlg}(\bG{n+2},\Omega^{n}\Lambda)~.
$$
\noindent If we identify \w{\bG{n+2}} with \w[,]{\piA\bV{n+2}} we get
a homomorphism
$$
[\Sigma^{n}\bV{n+2},V_{0}]\to\Hom_{\PAAlg}(\bG{n+2},\Omega^{n}\Lambda)~.
$$
\noindent Finally, by Corollary \ref{cco} there is a homomorphism
$$
\Hom_{\PAAlg}(\bG{n+2},\Omega^{n}\Lambda)\to
\HAQ{n+2}(\Lambda;\Omega^{n}\Lambda)
$$
\noindent as well. Combining these maps and identifications yields:
\begin{myeq}\label{eqcorhom}
\Phi:[\Sigma^{n}\bV{n+2},V_{0}]~\to~\HAQ{n+2}(\Lambda;\Omega^{n}\Lambda)
\end{myeq}
\end{mysubsection}

\begin{defn}\label{chomom}
In the setting of \S \ref{assumptions}, the $n$-th
\emph{correspondence homomorphism} is the map
$$
\wPh{n}:\bigoplus_{\vrp\in\Psi^{n+2}_0}\left[\Sigma^{n}\bV{n+2},V_{0}\right]
~\longrightarrow~\HAQ{n+2}(\Lambda;\Omega^{n}\Lambda)
$$
\noindent obtained by adding up the homomorphisms $\Phi$ of
\wref[,]{eqcorhom} whose target is an abelian group.
\end{defn}

The correspondence homomorphism is hard to evaluate, in
general. However, there is a special class of values of the higher
homotopy operation \w{\llrr{\Psi^{n+2}_{0}}} for which this evaluation
is possible:

\begin{defn}\label{dmhho}
A value
\w{\lra{f}\in[\bigvee_{\vrp\in\Psi^{n+2}_{j}}\,\Sigma^{n-j}\bV{n+2},\,V_{j}]}
of the higher homotopy  operation \w{\llrr{\Psi^{n+2}_{j}}} defined
in \S \ref{dhhoa} is called \emph{minimal} if it is represented by a map
\w[,]{f=\bigvee_{\vrp\in\Psi^{n+2}_{j}}~\tilde{f}_{\vrp}} as in
\wref[,]{eqahhob} for which \w{\tilde{f}_{\vrp}} is a constant map
(that is, takes values in degenerate $0$-simplices) for
all but the particular flag \w{\vrp_{j}:=(0<1<2<\dots<n-j)} in
\w{\Psi^{n+2}_{j}} (corresponding to \w[),]{d_0d_0\dots\bdz{}} and the map
\w{f_{\vrp_{j}}:\partial \cF{\vrp_{0}}\to\map_{\M}(\bV{n+2},V_{j})} is
constant on all but the basic atomic $k$-simplex
\w{\tau_{k}=|d_{0}|\dots|d_{0}|} of \w{\cF{\vrp_{j}}} (\S \ref{ssq}) 
for each \w[.]{1\leq k\leq n-j} 
\end{defn}

\begin{remark}\label{rmhho}
More generally, we could replace \w{\vrp_{j}} by another map
\w[,]{\vrp'\in\Psi^{n+2}_{j}} and simply require that \w{f_{\vrp'}} be
constant on all but one $k$-simplex \w{\sigma_{k}} of \w{\cF{\vrp'}}
for each \w[.]{1\leq k\leq n-j}  Note that for minimal cases,
\w{\lra{f}\in[\bigvee_{\vrp\in\Psi^{n+2}_{j}}\,\Sigma^{n-j}\bV{n+2},\,V_{j}]} 
is completely determined by the homotopy class of
\w{\tilde{f}\rest{\partial\sigma_{j}}} corresponding to a single map
\w[.]{\Sigma^{n-j}\bV{n+2} \to V_{j}} 
\end{remark}

\begin{defn}\label{deqladder}
A ladder diagram from 
\w{\gamma_{n}=d_{0}\bdz{\bV{n+2}}:\bV{n+2}\to \mZ{n} \qVd{n+1}} to 
\w{\gamma_{j}:\Sigma^{n-j}\bV{n+2}\to\mZ{j}\qVd{n+1}}
(cf.\ \S \ref{sladders}) is said to be \emph{equivalent} to 
a ladder diagram from the same \w{\gamma_{n}} to 
\w{\gamma'_{j}:\Sigma^{n-j}\bV{n+2}\to\mZ{j}\qVd{n+1}} 
if \w[.]{\gamma_{j}\sim\gamma'_{j}}
\end{defn}

%
%
\begin{prop}\label{pladmin}
There is a bijection between equivalence classes of ladder diagrams from 
\w{\gamma_{n}=d_{0}\bdz{\bV{n+2}}:\bV{n+2} \to \mZ{n} \qVd{n+1}} to 
\w{\gamma_{j}:\Sigma^{n-j}\bV{n+2} \to\mZ{j}\qVd{n+1}}
and minimal values of the higher homotopy
operation \w[.]{\lra{f_{\gamma_{j}}}\in \llrr{\Psi^{n+2}_{j}}}
Moreover, if \w{\gamma_j \sim 0} then \w{\lra{f_{\gamma_{j}}}}
vanishes (and conversely for the appropriate minimal value).
\end{prop}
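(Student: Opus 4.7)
The plan is to match up, bijectively, the data of a ladder diagram with the data of a minimal value of \w[,]{\llrr{\Psi^{n+2}_j}} by identifying each null homotopy \w{H_i} in the ladder with the filler assigned to a basic atomic $k$-simplex \w[.]{\tau_k=|d_0|\dotsc|d_0|} The key geometric observation is Remark \ref{rfirstflag}: the simplex \w{\tau_k} sits as a top-dimensional simplex in \w[,]{\cF{(0<1<\dotsc<k-1)}} and its zero-face \w{\parz\tau_k} decomposes as \w{\tau_{k-1}} followed (in the simplicial enrichment of $\QQ$) by the discrete $0$-simplex \w[,]{d_0} which mirrors exactly the passage \w{H_i\leadsto\gamma_{i-1}=d_0\circ H_i} in a ladder diagram.

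First I would construct the forward map. Starting from a ladder \w[,]{(\gamma_n,H_n,\gamma_{n-1},\dotsc,H_{j+1},\gamma_j)} I define an extension \w{\hX{n-j}:\QQ[\bnpp,\bj]\to\MOp} inductively on $k$ by setting \w{\hX{k}(\tau_k)} equal to the adjoint of \w[,]{H_{n-k+1}} while sending every other nondegenerate $k$-simplex to the basepoint of the target. The identity \w{d_0\circ H_i=j_{i-1}\circ\gamma_{i-1}} coming from the ladder ensures that this extension agrees with the previously constructed \w{\hX{k-1}} on \w[,]{\parz\tau_k} and Lemma \ref{basedecomp} controls the intersections of \w{d_{i_j-j+1}\circ\cF{\vrp^{\hat j}}} inside \w[,]{\parz\cF{\vrp}} so setting everything outside the atomic tower to zero is consistent. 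By construction the resulting minimal value has \w{\tilde f_{\vrp_j}} adjoint to \w[,]{\gamma_j} and vanishes on every other wedge summand.

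Conversely, by Definition \ref{dmhho} and Remark \ref{rmhho} a minimal value is determined by a single homotopy class of maps \w[;]{\Sigma^{n-j}\bV{n+2}\to V_j} the inductive fillers on \w{\tau_1,\tau_2,\dotsc,\tau_{n-j}} that produced it are adjoint to null homotopies \w[,]{H_n,H_{n-1},\dotsc,H_{j+1}} and setting \w{\gamma_{i-1}:=d_0\circ H_i} at each stage recovers the remaining data of a ladder. The two constructions are inverse, and Definition \ref{deqladder} declares ladders equivalent precisely when their terminal \w{\gamma_j} are homotopic, which corresponds exactly to the coincidence of the associated minimal values.

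The step I expect to be most delicate is the verification that ``send every non-atomic simplex to the basepoint'' is a coherent choice throughout the induction: one must check that a face of \w{\tau_k} never lands in another flag complex \w{\cF{\vrp'}} in a way that forces a nontrivial value there. Using Lemma \ref{basedecomp}, any such intersection is a union of lower-dimensional flag-complex boundaries, each of which can be sent to zero compatibly in the fibrant target by Proposition \ref{pnullextend}, so this bookkeeping should go through. The vanishing statement then follows directly from Proposition \ref{pnullextend} applied at \w[:]{k=n-j} if \w[,]{\gamma_j\sim 0} then the adjoint \w{f_{\vrp_j}} is null homotopic and the minimal value vanishes; conversely, since the minimal value is represented (up to homotopy) by the adjoint of \w[,]{\gamma_j} its vanishing forces \w[.]{\gamma_j\sim 0}
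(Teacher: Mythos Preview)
Your proposal is correct and follows essentially the same approach as the paper's proof: both construct the forward map by sending the basic atomic simplex \w{\tau_k} to the adjoint of the null homotopy \w{H_{n-k+1}} (with all other simplices sent to the basepoint), and recover the ladder from a minimal value by reading off \w{H_m} and \w{\gamma_{m-1}} from \w{\hX{j}(\tau_{n-m+1})} and \w[.]{\hX{j}(\parz\tau_{n-m+1})} The paper makes the cone identifications explicit via the models \w[,]{(\Delta[k+1]\otimes\bV{n+2})/(\Lambda^{k+1}_0\otimes\bV{n+2})} and notes that the adjoints factor through \w{\mC{m}\Vd} and \w{\mZ{m}\Vd} because \w{\partial_i\tau_{n-m+1}=0} for \w{i>0} (respectively for all $i$); your concern about faces of \w{\tau_k} landing in other flag complexes is addressed more directly by the wedge decomposition of Lemma~\ref{lqdec} than by Lemma~\ref{basedecomp}.
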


\begin{proof}
Given such a ladder diagram, we inductively define suitable
simplicial functors \w{\hX{k}:\sk{k}\QQ[\bnpp,\bnmk] \to \MOp}
extending the given \w[,]{X:\tD{n+1} \to \MO} in the notation of
\wref[.]{eqdksspec}
To do so, we only need to specify \w{\hX{k}} on \w[,]{\tau_{k}} with all
other simplices of \w{\QQ[\bnpp,\bnmk] \setminus \Fs\tD{n+1}}
sent to zero.  The compatibility condition reduces to
\w{\parz\hX{k}(\tau_{k})=d_{0}^{V_{n-k+2}}\hX{k-1}(\tau_{k-1})} and 
\w{\partial_{i}\hX{k}(\tau_{k})=0} for \w[.]{i>0}

Given the map \w{g_{i}:W \to \mC{i}\Yd} in a ladder diagram, let
\w[,]{g'_{i}:=j_{i}'\circ g_{i}} where \w{j_{i}':\mC{i}\Yd\hra Y_{i}} 
is the inclusion.
	
To begin, we define \w{\hX{0}:\sk{0}\QQ[\bnpp,\bn] \to \MOp} by mapping 
\w{\parz\tau_{1}=d_{0}|} to \w[.]{g'_{n}\in \MOp(\bV{n+2},V_{n})_{0}}
Recall that \w{\left(\Delta[1]\otimes\bV{n+2}\right)/
\left(\Lambda^{1}_{0}\otimes\bV{n+2}\right)}
provides a model for the cone \w[,]{\cone{\bV{n+2}}} where
\w{\Lambda^{n}_{k}\subseteq\partial\Delta[n]} is the horn omitting the
$k$-th face. Hence, the map in the ladder diagram
\w{H_{n}:\cone{\bV{n+2}}\to \mC{n}\Vd \subset V_{n}} defines a 
map \w{\widehat{H_{n}}:\Delta[1] \otimes\bV{n+2}\to V_{n}} whose
restriction to \w{\Lambda^{1}_{0}\otimes\bV{n+2}} is zero, and whose
restriction to \w{\partial_{0}\Delta[1]\otimes\bV{n+2}} is \w[.]{g'_{n}} 
The adjoint \w{\widetilde{H_{n}}:\Delta[1]\to\MOp(\bV{n+2},V_{n})}
of \w{\widehat{H_{n}}} restricts to the zero map on the horn 
\w[.]{\Lambda^{1}_{0}}  We can therefore extend \w{\hX{0}} to
a simplicial functor  \w{\sk{1}\QQ[\bnpp,\bn]\to\MOp} by sending \w{\tau_{1}} to
\w[,]{\widetilde{H_{n}} \in \MOp(\bV{n+2},V_{n})_{1}} with
\w{d_{1}\widetilde{H_{n}}=0} and \w{d_{0} \widetilde{H_{n}}=\tilde{g}'_{n}} 
by construction. Since \w[,]{d_{0}^{V_{n}} g'_{n}=0} \w{H_{n}} induces
a map  
$$
g'_{n-1}:\Sigma\bV{n+2}\cong\left(\Delta[1]\otimes\bV{n+2}\right)/
\left(\partial\Delta[1]\otimes\bV{n+2}\right)\to \mC{n-1}\Vd \subset V_{n-1}~,
$$
\noindent and we define \w{\hX{1}:\sk{1}\QQ[\bnpp,\mathbf{n-1}]\to\MOp}
extending the previous choices by sending \w{\parz \tau_{2}} to 
\w[.]{\tilde{g}'_{n-1} \in \MOp(\bV{n+2},V_{n-1})_{1}}
	
At the $k$-th stage, assume we have defined
\w{\hX{k}:\sk{k}\QQ[\bnpp,\bnmk]\to\MOp} sending \w{\parz\tau_{k+1}} 
to \w[.]{\tilde{g}'_{n-k}\in\MOp(\bV{n+2},V_{n-k})_{k}} 
Note that \w{\left(\Delta[k+1]\otimes\bV{n+2}\right)/
\left(\Lambda^{k+1}_{0}\otimes\bV{n+2}\right)} is a model for 
\w[,]{\cone{\Sigma^{k}\bV{n+2}}} so \w{H_{n-k}} defines a map 
\w{\widetilde{H_{n-k}}:\Delta[k+1]\to\MOp(\bV{n+2},V_{n-k})} 
whose restriction to \w{\Lambda^{k+1}_{0}} is zero.  Viewed as a
\wwb{k+1}simplex in the mapping space, this means that 
\w{d_{i}\widetilde{H_{n-k}}=0} for \w[,]{i>0} while 
\w[.]{d_{0} \widetilde{H_{n-k}}=\tilde g'_{n-k}}
We may therefore extend \w{\hX{k}:\sk{k}\QQ[n+2,n-k] \to \MOp} to the
\wwb{k+1}skeleton of \w{\QQ[n+2,n-k]} by mapping \w{\tau_{k+1}} to
\w[.]{\widetilde{H_{n-k}}} Since \w[,]{d_{0}^{V_{n-k}} g'_{n-k}=0} 
\w{H_{n-k}} induces a map 
$$
g'_{n-k-1}:\Sigma^{n-k}\bV{n+2}\cong\left(\Delta[k+1]\otimes\bV{n+2}\right)/
\left(\partial\Delta[k+1]\otimes\bV{n+2}\right)
\to \mC{n-k-1}\Vd \subset V_{n-k-1}~.
$$
\noindent We define \w{\hX{k+1}:\sk{k+1}\QQ[\bnpp,\mathbf{n-k-1}]\to\MOp} 
extending the previous choices by sending \w{\parz \tau_{k+2}} to 
\w[.]{\tilde{g}'_{n-k-1} \in \MOp(\bV{n+2},V_{n-k-1})_{k+1}}

Conversely, given \w{\hX{j}:\sk{j}\QQ[\bnpp,\mathbf{n-j}] \to \MOp}
representing a minimal value of
\w[,]{[\bigvee_{\vrp\in\Psi^{n+2}_{j}}\,\Sigma^{n-j}\bV{n+2},\,V_{j}]} 
we define the corresponding ladder diagram by setting
\w[,]{\widetilde{H_{m}}=\hX{j}(\tau_{n-m+1})} and
\w[.]{\tilde{g}'_{m}=\hX{j}(\parz\tau_{n-m+1})} Note that the adjoint
\w{H_{m}} factors through \w[,]{\mC{m}\Vd \subset V_m} since
\w{\partial_{i}\tau_{n-m+1}=0} for \w[,]{i >0} while the adjoint
\w{g'_{m}} factors through \w{\mZ{m}\Vd\subset V_{m}} since 
\w{\partial_{i}\parz \tau_{n-m+1}=0} for all $i$.
\end{proof}

\begin{cor}\label{cladmin}
Given a ladder diagram from \w{\gamma_{n}} down to \w[,]{\gamma_{j}} as in 
Proposition \ref{pladmin}, the segment of the simplicial diagram from
dimension \w{\bnpp} down to $\bj$ is $\infty$-homotopy commutative, so
it can be rectified.
\end{cor}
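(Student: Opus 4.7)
My plan is to read the required $\infty$-homotopy-commutative lift directly off of the construction given in the proof of Proposition \ref{pladmin}, and then appeal to the general rectification theorem recalled in \S\ref{srp}.

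First, I would run the inductive procedure from the proof of Proposition \ref{pladmin} all the way through stage $k=n-j$, using the null-homotopies $H_{m}$ and the maps $g'_{m}$ provided by the ladder diagram for $j\leq m\leq n$. Each iteration has two sub-steps: the ``new-object'' step, which adjoins the target $\bnmk$ and maps $\parz\tau_{k+1}$ to $\tilde{g}'_{n-k}$, and the ``skeleton-enlargement'' step, which maps $\tau_{k+1}$ to $\widetilde{H_{n-k}}$. The second sub-step lifts the functor from the $k$-skeleton to the $(k+1)$-skeleton of $\QQ[\bnpp,\bnmk]$, and since the mapping space $\map_{\QQ}(\bnpp,\bnmk)$ has top dimension $k+1$ (by the length-count of flags in Lemma \ref{lqdec}), this produces a simplicial functor on the \emph{full} subcategory $\QQ[\bnpp,\bnmk]$. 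All non-degenerate simplices outside the tower $(\tau_{i})_{i\geq 1}$ are sent to the zero map; consistency of these zero assignments with the boundary structure of the flag complexes (Lemmas \ref{ldualperm} and \ref{basedecomp}) is guaranteed by the minimality of the value together with the identities $\partial_{i}\widetilde{H_{n-k}}=0$ for $i>0$. At $k=n-j$, this yields a simplicial functor $\hX{n-j}:\QQ[\bnpp,\bj]\to\MOp$ extending the given strict diagram $X:\co{\tD{n+1}}\to\MO$.

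Next, I would identify $\hX{n-j}$ as an $\infty$-homotopy-commutative lift of the segment of $\cVd{n+2}$ restricted to the objects $\{\bnpp,\dotsc,\bj\}$. By the specialization \eqref{eqdksspec} of the pushout \eqref{eqhpo}, the subcategory $\QQ[\bnpp,\bj]$ is a cofibrant replacement for the corresponding constant $\SaOp$-category; and $\MOp$ is fibrant because each mapping space of $\M$ is a Kan complex. Thus $\hX{n-j}$ is precisely an $\infty$-homotopy-commutative lift in the sense of \S\ref{srp}.

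Finally, rectification follows from the Boardman-Vogt-Dwyer-Kan theorem \cite[Theorem~IV.4.37]{BVogHI} (see also \cite[Theorem~2.4]{DKSmH}), which asserts that such a lift into a fibrant $\SaO$-category can be replaced (up to homotopy) by a strict diagram. The main obstacle in the argument is the first-paragraph bookkeeping: verifying that the zero assignments on the non-$\tau_{k}$ simplices of the various flag complexes are truly compatible with their face structure, which is exactly where the minimality assumption plays its essential role.
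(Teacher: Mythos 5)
The paper gives no explicit proof for this Corollary, so the comparison is with what Proposition \ref{pladmin} actually supplies.

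Your overall strategy is the right one: read the lift off the inductive construction in the proof of Proposition \ref{pladmin} and then appeal to the rectification theorem of \cite{BVogHI,DKSmH}.  However, there is an indexing problem that makes the first paragraph over-claim.  A ladder diagram from \w{\gamma_{n}} to \w{\gamma_{j}} (Definition \ref{dladder}) contains the null-homotopies \w{H_{m}} only for \w[,]{j+1\leq m\leq n} \emph{not} for \w{j\leq m\leq n} as you state.  Since stage $k$ of the induction consumes \w[,]{H_{n-k}} the ladder lets you complete stages \w[,]{0,\dotsc,n-j-1} after which the functor is fully defined on \w{\QQ[\bnpp,\bjp]} (the mapping space \w{\map_{\QQ}(\bnpp,\bjp)} has top dimension \w[,]{n-j} which matches the last extension using \w[);]{H_{j+1}} what you then have on \w{\QQ[\bnpp,\bj]} is only a partial functor on the \wwb{n-j}skeleton, sending \w{\parz\tau_{n-j+1}} to \w[.]{\tilde g'_{j}}  The top-dimensional \wwb{n-j+1}simplices of each flag complex in \w{\map_{\QQ}(\bnpp,\bj)} are precisely the ones that would require a null-homotopy \w{H_{j}} of \w[,]{g_{j}=j_{j}\gamma_{j}} which the ladder does \emph{not} provide (indeed, whether \w{\gamma_{j}\sim 0} is exactly the information that determines whether \w{\lra{f_{\gamma_{j}}}} vanishes and the induction continues, by Theorem \ref{thhoa}).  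So the claim ``at $k=n-j$, this yields a simplicial functor \w{\hX{n-j}:\QQ[\bnpp,\bj]\to\MOp}'' is not supported by the given data; what is supported is the full functor on \w[.]{\QQ[\bnpp,\bjp]}  The rest of your argument (uniqueness of the zero extensions via Lemmas \ref{ldualperm} and \ref{basedecomp}, fibrancy, and the appeal to Boardman--Vogt/Dwyer--Kan--Smith) is fine once the segment is identified correctly; the phrase ``down to \w{\bj}'' in the Corollary should be read as saying the last map \w{\gamma_{j}} reaches level $j$, while the segment that is genuinely $\infty$-homotopy commutative, and hence rectifiable, is the one on the objects \w[.]{\bjp,\dotsc,\bnpp}
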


Combining Proposition \ref{pladmin} with Proposition \ref{pkinv}
yields:

%
%
\begin{cor}\label{calwaysmin}
Under the assumptions of \ref{assumptions}, the last higher operation
\w{\llrr{\Psi^{n+2}_{0}}} has a minimal value.  As a consequence, the
operation is non-empty (well-defined) and vanishes if any minimal
value vanishes. 
\end{cor}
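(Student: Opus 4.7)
The plan is to obtain the minimal value by combining the two preceding propositions in sequence. First I would invoke Proposition \ref{pkinv} to construct, starting from $\gamma_n := d_0^{V_{n+1}} \circ \bdz{\bV{n+2}} : \bV{n+2} \to \mZ{n}\qVd{n+1}$, a complete ladder diagram extending all the way down to $\gamma_0 : \Sigma^n \bV{n+2} \to V_0$. Because $\qVd{n+1}$ is taken to be an $(n-1)$-quasi-Postnikov section under the assumptions of \S \ref{assumptions}, the vanishing recorded in \eqref{eqnqpia} supplies exactly the hypothesis of Lemma \ref{furtherladder} needed at each stage $i$ with $1 \le i \le n$. Hence the downward induction producing the ladder terminates at $j = 0$, rather than stalling at some intermediate dimension.

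Next I would apply Proposition \ref{pladmin} in the case $j = 0$ to convert the ladder diagram just constructed into a minimal value $\lra{f_{\gamma_0}} \in \llrr{\Psi^{n+2}_0}$ via the stated bijection between equivalence classes of ladder diagrams and minimal values. Existence of such an $\lra{f_{\gamma_0}}$ shows that the subset $\llrr{\Psi^{n+2}_0} \subseteq \bigl[\bigvee_{\vrp\in\Psi^{n+2}_0} \Sigma^{n}\bV{n+2},\,V_0\bigr]$ is non-empty, which is precisely the sense in which the higher homotopy operation is well-defined.

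For the vanishing claim, if some minimal value $\lra{f_{\gamma_0}}$ is the zero class, then $0$ lies in $\llrr{\Psi^{n+2}_0}$, and by the usual convention for higher order operations this means the operation vanishes. The same conclusion follows from the ``moreover'' clause of Proposition \ref{pladmin}: a ladder diagram with $\gamma_0 \sim 0$ produces a minimal value that vanishes.

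There is essentially no further obstacle: this corollary is a bookkeeping combination of Propositions \ref{pkinv} and \ref{pladmin}. The only point worth double-checking is compatibility of parameters — namely that the ladder diagram in Proposition \ref{pkinv} can be continued all the way to $j = 0$ (so that Proposition \ref{pladmin} applies in the required range), which is guaranteed by the quasi-Postnikov hypothesis that makes all the spiral connecting maps $s_i$ isomorphisms for $0 \le i \le n$.
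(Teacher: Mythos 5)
Your proposal is correct and matches the paper's own proof, which is exactly the combination ``Combining Proposition \ref{pladmin} with Proposition \ref{pkinv}'': Proposition \ref{pkinv} produces the full ladder diagram from $\gamma_n$ to $\gamma_0$ under the quasi-Postnikov hypothesis, and Proposition \ref{pladmin} with $j=0$ converts it into a minimal value, with the ``moreover'' clause giving the vanishing statement.
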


\begin{remark}\label{longtoda}
Note that minimal values of the higher operation are values of long
Toda brackets of the form:
$$
\xymatrix@R=15pt{
&&&&&&\\
\bV{n+2} \ar[r]^<<<<<{\bdz{}} \ar@/^{3.3pc}/[rr]^{0} &
C_{n+1}\Vd \ar[r]^<<<<{d^{n+1}_{0}} \ar@{=>}[u]_{H}
\ar@/_{3.3pc}/[rr]_{0} & C_{n}\Vd \ar@{=>}[d]_{0} \ar[r]^<<<<{d^{n}_{0}} &
C_{n-1}\Vd  \ar[r]^<<<<<{d^{n-1}_{0}} & \dotsc \ar[r]^{d^{2}_{0}} & 
C_{1}\Vd \ar[r]^{d^{1}_{0}} & V_{0}~, \\
&&&&&&
}
$$
\noindent with linear indexing category, in which all but the first
composite is strictly $0$.
\end{remark}

%
%
\begin{thm}\label{thhaq}
In the situation of \S \ref{assumptions}, the correspondence homomorphism
\w{\wPh{n}} maps a minimal value of \w{\llrr{\Psi^{n+2}_0}} to the
Andr\'{e}-Quillen obstruction \w{\beta_{n}} to realizing
$\Lambda$.
\end{thm}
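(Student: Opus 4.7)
The plan is to combine Proposition \ref{pladmin} (which puts minimal values of $\llrr{\Psi^{n+2}_0}$ in bijection with equivalence classes of ladder diagrams) with Proposition \ref{pkinv} (which shows that such a ladder diagram produces a representative of $\beta_n$), and then verify that the identifications made along the way coincide with the correspondence homomorphism $\wPh{n}$ defined in \S \ref{sch}.

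First, I start with a minimal value $\lra{f}\in\llrr{\Psi^{n+2}_0}$. By Definition \ref{dmhho}, it is represented by a map $\tilde{f}_{\vrp_0}:\Sigma^{n}\bV{n+2}\to V_{0}$ attached to the distinguished flag $\vrp_0=(0<1<\dots<n)$, with every other flag contributing zero. By Proposition \ref{pladmin} this value corresponds to a ladder diagram from $\gamma_n = d^{V_{n+1}}_0\circ\bdz{\bV{n+2}}$ down to a map $\gamma_0:\Sigma^{n}\bV{n+2}\to\mZ{0}\qVd{n+1}=V_0$, and tracing through the inductive construction in that proof (with $\hX{k}(\partial\tau_{k+1})=\tilde{g}'_{n-k}$) shows that $\tilde{f}_{\vrp_0}$ is exactly $\gamma_0$, because $\mZ{0}\qVd{n+1}=V_0$ and the inclusion $j_0$ is the identity.

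Second, Proposition \ref{pkinv} asserts that the homotopy class $[\tilde\gamma_0]$, regarded as an element of $\Omega^{n}\pi_0\piA\qVd{n+1}\lin{\bV{n+2}}\cong\Omega^{n}\Lambda\lin{\bV{n+2}}$, represents $\beta_n$. Under Corollary \ref{cco}, this homotopy class of $\PiA$-algebra maps $\bG{n+2}\to\Omega^{n}\Lambda$ determines the cohomology class $\beta_n\in\HAQ{n+2}(\Lambda;\Omega^{n}\Lambda)$. It therefore remains to check that the chain of identifications built into Proposition \ref{pkinv} agrees with $\wPh{n}$: namely the $\Sigma$--$\Omega$ adjunction $[\Sigma^{n}\bV{n+2},V_0]\cong[\bV{n+2},\Omega^{n}V_0]$, the identification $[\bV{n+2},\Omega^{n}V_0]\cong\Hom_{\PAAlg}(\bG{n+2},\Omega^{n}G_0)$ coming from $\bV{n+2}\in\MA$, the post-composition with the augmentation $\varepsilon_\ast:\Omega^{n}G_0\to\Omega^{n}\Lambda$ (induced by $\pi_0\piA\qVd{n+1}\cong\Lambda$), and finally Corollary \ref{cco}.

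The main obstacle will be this third, bookkeeping step: one must show that the spiral-sequence isomorphisms $s_i$ of \eqref{eqspiral}, iterated from $\pinat{n}$ down to $\pinat{0}$ in Proposition \ref{pkinv}, reproduce the iterated loop adjunction used in $\wPh{n}$. This follows from the definition of the $s_i$ as connecting homomorphisms for the fibration sequences \eqref{eqsn}: on $\piA$ these connecting maps are precisely suspension isomorphisms for the spherical objects in $\MA$, so their inverses correspond under adjunction to the successive $\Sigma^{-1}$ shifts. Naturality in $\bV{n+2}$ and in the augmentation then yields the equality between the element of $\HAQ{n+2}(\Lambda;\Omega^{n}\Lambda)$ produced by $\wPh{n}$ applied to $[\gamma_0]$ and the class $\beta_n$ obtained from Proposition \ref{pkinv}, completing the proof.
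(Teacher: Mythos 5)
Your proposal is correct and takes the same route as the paper, whose proof is just the one-line citation ``This follows from Propositions \ref{pkinv} and \ref{pladmin} with $j=0$.'' You have unpacked that reference accurately: Proposition \ref{pladmin} (for $j=0$) identifies the minimal value with the ladder-diagram class of $\gamma_0$, Proposition \ref{pkinv} identifies $[\tilde\gamma_0]$ with $\beta_n$, and the remaining bookkeeping step you isolate — that the spiral connecting maps $s_i$ realized by the ladder coincide with the $\Sigma/\Omega$ adjunctions built into $\wPh{n}$, since each $s_i$ comes from the fibration sequence \eqref{eqsn} and is a loop shift on spherical objects — is exactly the implicit compatibility the paper is relying on when it declares the two propositions to combine.
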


\begin{proof}
This follows from Propositions \ref{pkinv} and \ref{pladmin} with \w[.]{j=0}
\end{proof}

%
%
\begin{cor}\label{cvanish}
The vanishing of any minimal value of \w{\llrr{\Psi^{n+2}_0}} implies the
vanishing of \w[.]{\beta_{n}}
Conversely, given an \wwb{n+1}\sps\ \w{\qWd{n+1}} for $\Lambda$,
as in \S \ref{sceo}, there is an \wwb{n+1}truncated
simplicial object \w{\qVd{n+1}} realizing \w[,]{\tau_{n+1}\Gd}
and if the cohomology obstruction \w{\beta_{n+1}} associated to
\w{\qWd{n+1}} vanishes, so does \w{\llrr{\Psi^{n+2}_0}} for \w[.]{\qVd{n+1}}
\end{cor}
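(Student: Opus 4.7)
The first claim is immediate from Theorem \ref{thhaq}: since $\wPh{n}$ is a homomorphism into the abelian group $\HAQ{n+2}(\Lambda;\Omega^{n}\Lambda)$ that sends a minimal value of $\llrr{\Psi^{n+2}_0}$ to $\beta_{n}$, the vanishing of such a minimal value immediately forces $\beta_{n}=0$.

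For the converse the plan has two stages. In the first stage, construct an $(n+1)$-truncated CW realization $\qVd{n+1}$ of $\tau_{n+1}\Gd$ from the given $\qWd{n+1}$, by an induction essentially identical to the one in \S \ref{assumptions}. Beginning with $\qVd{1}$, whose existence is established there, at each $k\leq n+1$ the hypothesis that $\qWd{n+1}$ is a \sps\ forces the vanishing of the relevant Postnikov obstructions $\beta_{m}$ for $m<n+1$, which via the realization arguments of \S \ref{assumptions} together with \cite[Corollary 4.2]{BJTurR} permits each attaching map $\bdz{\bG{k}}$ to be realized by a strict $\bdz{\bV{k}}:\bV{k}\to\mZ{k-1}\qVd{k-1}$ (rather than merely into the chain object).

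In the second stage, the vanishing of $\beta_{n+1}$ for $\qWd{n+1}$ implies, by the discussion in \S \ref{sceo}, that $\qWd{n+1}$ extends to $\qWd{n+2}$. Running the same realization construction one step further, this yields a strict $(n+2)$-truncated realization $\qVd{n+2}$ of $\tau_{n+2}\Gd$ extending $\qVd{n+1}$, with $d_{0}\circ \bdz{\bV{n+2}}=0$ on the nose. By Corollary \ref{chhoa}, $\llrr{\Psi^{n+2}_0}$ for $\qVd{n+1}$ is precisely the final obstruction to constructing such an extension, so its vanishing follows.

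The main technical obstacle will be in the second stage: ensuring that the attaching map $\bdz{\bV{n+2}}$ arising in the extension agrees, up to equivalence of ladder diagrams in the sense of Proposition \ref{pladmin}, with the one implicitly prescribed by $\qVd{n+1}$. This compatibility should follow from the uniqueness up to homotopy of representatives of $\bdz{\bG{n+2}}$ together with the naturality of the ladder-diagram correspondence, which allow one to identify the minimal value of $\llrr{\Psi^{n+2}_0}$ with the null class produced by the second-stage construction.
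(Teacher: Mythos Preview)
Your argument for the first implication is correct and is precisely what the paper intends: since $\wPh{n}$ is a homomorphism and Theorem \ref{thhaq} says it sends a minimal value to $\beta_{n}$, the vanishing of that minimal value forces $\beta_{n}=0$.

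For the converse, the paper gives no explicit proof; the corollary is simply stated, with the subsequent paragraph explaining the index shift. Your two-stage plan is in the right spirit, but by constructing $\qVd{n+1}$ \emph{first} and only afterward producing an extension, you manufacture the very compatibility problem you flag at the end. The cleaner route, and the one the paper implicitly has in mind, reverses the order: the vanishing of $\beta_{n+1}$ means (by \S \ref{sceo} together with \cite[Theorem 6.4(b)]{BJTurR}) that the obstruction-theoretic machinery produces an $(n+2)$-truncated realization $\qVd{n+2}$ of $\tau_{n+2}\Gd$ directly; one then \emph{defines} $\qVd{n+1}:=\tau_{n+1}\qVd{n+2}$. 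For this particular $\qVd{n+1}$ the extension to $\qVd{n+2}$ exists by construction, so Corollary \ref{chhoa} gives the vanishing of $\llrr{\Psi^{n+2}_0}$ at once, with no matching-up of independently built objects required.

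Your proposed resolution via uniqueness-up-to-homotopy of representatives of $\bdz{\bG{n+2}}$ and naturality of the ladder correspondence is not obviously sufficient as stated: two homotopic lifts $\bV{n+2}\to\mC{n+1}\qVd{n+1}$ of $\bdz{\bG{n+2}}$ need not produce the same value of $\llrr{\Psi^{n+2}_0}$, only values in the same indeterminacy orbit, and nothing in the setup guarantees that the null value transfers. So while your overall strategy is sound, the final paragraph does not close the gap; the reordering above does.
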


Note the (necessary) shift in indexing of the two series of obstructions,
because of the different things they measure: the $k$-invariant \w{\beta_{n}}
is the obstruction \eqref{eqaqcohcl} to obtaining an $n$-th semi-Postnikov section for
$\Lambda$, extending a given \wwb{n-1}st semi-Postnikov section, 
while the higher homotopy operation \w{\llrr{\Psi^{n+2}_0}}
is the obstruction (Corollary \ref{chhoa}) to constructing
\w{\qVd{n+2}} realizing the \wwb{n+2}truncation of a given resolution
\w{\Gd} for $\Lambda$.

%
%
\sect{Difference obstructions}
\label{cdo}

The next question arising in the inductive procedure for realizing a
\PAa $\Lambda$ described in \S \ref{assumptions}  is that 
of distinguishing between different extensions of a given
\wwb{n-1}\sps\ \w{\qWd{n}} to an $n$-\sps\ \w[.]{\qWd{n+1}} 

Recall that if $\Lambda$ is realizable, then any \w{X\in\M} with
\w{\piA X\cong\Lambda} has a free resolution \w{\Wd\xra{\simeq}\co{X}}
in the \ww{E^{2}}-model category \w{s\M} 
(by \cite{StoV}), with \w{J\Wd} $\A$-equivalent to $X$ (\S
\ref{setmc}), and \w{\piA\Wd} a free \PAa resolution of $\Lambda$.
Since $J$ preserves weak equivalences, classifying realizations (in
\w[)]{s\M} of free simplicial resolutions of $\Lambda$ subsumes (and
refines) the classification of all realizations of $\Lambda$ (in $\M$)
up to $\A$-equivalence. 
However, every free simplicial \PAa \w{\Gd} has a CW basis,
and every CW resolution \w{\Gd\to\Lambda} can be realized as a
resolution \w{\Wd\to X} in \w[.]{s\M} Thus we can in fact apply the
inductive procedure described in \S \ref{assumptions}, starting with a
specific CW basis for \w[.]{\Gd}

Once more, we have two methods of constructing the difference
obstructions for inductively distinguishing between realizations of
such a CW resolution \w[:]{\Gd\to\Lambda} in terms of
Andr\'{e}-Quillen cohomology classes, and in terms of higher homotopy
operations.

\begin{mysubsection}{Cohomology difference obstructions}\label{scdo}
Again, there are a number of equivalent descriptions of the difference 
obstructions in cohomology, and we give one based on \cite{BDGoeR},
but stated in terms of a fixed simplicial \PAa resolution
\w{\Gd\to\Lambda} with given CW basis \w[:]{(\bG{i})_{i=0}^{\infty}}

Assume as in \S \ref{assumptions} that we have chosen a realization 
\w{\qVd{n+1}} for \w[,]{\tau_{n+1}\Gd} which has two different
extensions \w{\qVd{a}} and \w[,]{\qVd{b}} both realizing
\w[.]{\tau_{n+2}\Gd} Since we are not concerned now with the existence
problem, we may assume that \w{\qVd{a}} and \w{\qVd{b}} are
\wwb{n+2}coskeletal objects (that is, \ww{\Po{n+1}}-simplicial objects)
in \w[,]{s\M}  with
\w{\Wd:=\csk{n+1}\qVd{a}=\csk{n+1}\qVd{b}=\csk{n+1}\qVd{n+1}} 
as their common $n$-th Postnikov section. In particular, they are
both $n$-\qps s.

The question is whether \w{\qVd{a}} and \w{\qVd{b}} are weakly
equivalent (relative to \w[),]{\Wd} that is, whether there is a map
$\vrp$ fitting into a commuting diagram of vertical fibration
sequences in \w[,]{s\M/B\Lambda} with horizontal weak equivalences: 
\mydiagram[\label{eqpostdiag}]{
\bE{\Omega^{n+1}\Lambda}{n+1} \ar[d] \ar[rr]^{\vrp_{\ast}} && 
\bE{\Omega^{n+1}\Lambda}{n+1} \ar[d] \\
\qVd{a} \ar[d]_{p\q{a}_{n+1}} \ar[rr]^{\vrp} &&
\qVd{b} \ar[d]_{p\q{b}_{n+1}} \\
\Wd \ar[d]_{k\q{a}_{n}} \ar[rr]^{=} && \Wd \ar[d]_{k\q{b}_{n}} \\
\bE{\Omega^{n+1}\Lambda}{n+2} \ar[rr]^{\bar{W}\vrp_{\ast}} && 
\bE{\Omega^{n+1}\Lambda}{n+2}
}
\noindent where \w{p\q{t}_{n+1}:\qVd{t}\to\Wd} are the structure maps
in the Postnikov towers, and  
\w{k\q{t}_{n}:\Wd=\Po{n}\qVd{t}\to\bE{\Omega^{n+1}\Lambda}{n+2}}
are the (functorial) $k$-invariants for \w{\qVd{t}} \wb[.]{t=a,b}

By \cite[Prop.~8.7]{BDGoeR} (or \cite[Prop.~5.3]{BJTurR}), for any
$\Lambda$-module $\K$ there is a natural isomorphism 
$$
\left[\Wd,\,E_{\Lambda}^{s\M}(\K,n)\right]_{s\M/B\Lambda}~\to~
\left[\piA\Wd,\tEL{\K}{n}\right]_{s\PAAlg/\Lambda}~\cong~
\HAQ{n}(\piA\Wd/\Lambda;\K)
$$
\noindent for \w[.]{n\geq 2}  

Therefore, the $k$-invariants \w{k\q{t}_{n}} are determined by the
induced maps of simplicial \PAa[s]  
\w{(k\q{t}_{n})_{\#}:\piA\Wd\to\tEL{\Omega^{n+1}\Lambda}{n+2}}
(where both source and target are Eilenberg-Mac~Lane objects in
\w[,]{s\PAAlg} by \S \ref{sceo}). Since the vertical fibration
sequences of \wref{eqpostdiag} induce long exact sequences in \w{\piA}
by \cite[Lemma 5.11]{BJTurR}, we see 
that \w{(k\q{t}_{n})_{\#}} is an isomorphism. Thus the choice of the
$k$-invariants is determined solely by the Eilenberg-Mac~Lane
structure on \w[,]{\piA\Wd} which is determined in turn by the choice
of sections \w{s\q{a},s\q{b}:\tPo{0}\piA\Wd\simeq\tBL\to\piA\Wd}
(where \w{\tPo{i}} is the $i$-th Postnikov section in \w[).]{s\PAAlg}

Thus the \emph{cohomology difference obstruction} for \w{\qVd{a}} and
\w{\qVd{b}} (relative to \w[)]{\Po{n}\qVd{a}=\Po{n}\qVd{b}=\Wd} is
defined to be
\begin{myeq}\label{eqaqcdo}
\delta_{n}~:=~[s\q{a}]-[s\q{b}]~\in~\HAQ{n+2}(\Lambda,\Omega^{n+1}\Lambda)~.
\end{myeq}
\noindent See \cite[\S 8 and Proposition 9.12]{BDGoeR} and 
\cite[Theorem~5.7(c)]{BJTurR}.
\end{mysubsection}

%
%
\begin{prop}\label{pdiffobst}
Under the assumptions of \ref{assumptions} for \w[,]{n \geq 1} with
the two attaching maps
\w{\bdz{t}:\bV{n+2}\to\mZ{n+1}\qVd{t}=\mZ{n+1}\Wd} \wb[,]{t=a,b}
the obstruction class \w{\delta_{n}} is represented
in the sense of Corollary \ref{cco} by the map 
\w{\bG{n+2}\to\pinat{n+1}\qVd{t}} induced by 
\w[.]{\odel:=\bdz{a}\cdot(\bdz{b})^{-1}:\bV{n+2}\to\mZ{n+1}\Wd} 
\end{prop}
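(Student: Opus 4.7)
The plan is to parallel the computation of Proposition \ref{pexistobst}, but carried out simultaneously for the two extensions $\qVd{a}$ and $\qVd{b}$ and then subtracted. Set $\Kd^{(t)} := \piA\qVd{t}$ for $t\in\{a,b\}$. The common identification $\csk{n+1}\qVd{a}\simeq\csk{n+1}\qVd{b}\simeq\Wd$ gives $\tau_{n+1}\Kd^{(a)}\cong\tau_{n+1}\Kd^{(b)}\cong\tau_{n+1}\Gd$, so the two simplicial \PAa[s] differ only in their attaching information at dimension $n+2$, encoded precisely by the two lifts $\bdz{t}:\bV{n+2}\to\mZ{n+1}\Wd$ of the common attaching map $\bdz{\bG{n+2}}$. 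Following the proof of Proposition \ref{pexistobst}, I would construct weak equivalences $f^{(t)}:\Gd \to \Po{n+1}\Kd^{(t)}\simeq\csk{n+2}\Kd^{(t)}$ which agree through simplicial dimension $n+1$ (taken to be the identity after a suitable identification), and which on the tautological class $\bar\iota_{n+2}\in\mC{n+2}\Gd\lin{\bV{n+2}}$ send it to the matching collection of $(n+3)$ faces in $\csk{n+2}\Kd^{(t)}$ determined by $\bdz{t}$ (all but the $0$-face vanishing, since $\bdz{t}$ lands in the cycles).

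Next, I would translate \S \ref{scdo} into an explicit cocycle computation. By that section, $\delta_n=[s^{(a)}]-[s^{(b)}]$ arises equivalently as the difference of induced $k$-invariants $(k_n^{(a)})_{\#}-(k_n^{(b)})_{\#}$ on $\piA\Wd$, pulled back through $f^{(a)}$ and $f^{(b)}$. Applying the cocycle description in \S \ref{sdki}, Proposition \ref{pmoore}, and Lemma \ref{lcocw}, this difference is represented by the map $\bG{n+2}\to\pinat{n+1}\qVd{t}$ obtained by subtracting the classes of $f^{(a)}(\bar\iota_{n+2})$ and $f^{(b)}(\bar\iota_{n+2})$. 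Each $f^{(t)}(\bar\iota_{n+2})$ unwinds (via \wref{eqpcz} and the isomorphism \wref{eqcommmoor}) to the homotopy class of $\bdz{t}:\bV{n+2}\to\mZ{n+1}\Wd$, exactly as in the proof of Proposition \ref{pexistobst}; the only new ingredient is that here both $\bdz{t}$ already factor through $\mZ{n+1}\Wd$, so the classes lie in $\pinat{n+1}\qVd{t}$ rather than $\pinat{n}\qVd{n+1}$. Their difference is thus the class of $\odel=\bdz{a}\cdot(\bdz{b})^{-1}$, formed using the group structure on $[\bV{n+2},\mZ{n+1}\Wd]$ coming from the cogroup structure on $\bV{n+2}\in\MA$.

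The main technical obstacle is reconciling the abelian group structure on $\HAQ{n+2}(\Lambda,\Omega^{n+1}\Lambda)$ used to form $[s^{(a)}]-[s^{(b)}]$ with the group structure on $[\bV{n+2},\mZ{n+1}\Wd]$ used to form $\odel$. Both arise from the abelian structure on restricted $\Lambda$-modules, but verifying that they correspond under the Dold--Kan equivalence in Proposition \ref{pmoore} and the identifications of Lemma \ref{lcocw} requires careful bookkeeping, particularly because the cocycle representation goes through several intermediate mapping spaces before reaching the right-hand side of \wref{eqcorhom}. Once this compatibility is established, the proposition follows from the naturality of the $k$-invariant cocycle construction and the matching-collection computation in the second paragraph.
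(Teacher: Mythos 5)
Your overall strategy is on target: evaluate on a tautological class, extract $\bdz{t}$ for each $t=a,b$, and subtract. But the specific setup diverges from the paper's proof in ways that introduce real gaps.

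First, the paper's proof does not pull back $k$-invariants at all. It works directly with the sections $s\q{t}$: since $s\q{t}$ factors through $\tPo{n+1}\Gd$ and is induced by the Postnikov structure map $p\q{t}_{n+1}:\qVd{t}\to\Wd$, one has $\delta_n = (p\q{a}_{n+1})_{\#}-(p\q{b}_{n+1})_{\#}$ as a map $\tPo{n+1}\Gd\to\piA\Wd$, both summands landing in the common $\piA\Wd$. This is a meaningfully simpler route than your reformulation via $(k^{(a)}_n)_{\#}-(k^{(b)}_n)_{\#}$; in fact \S\ref{scdo} points out that $(k\q{t}_n)_{\#}$ are isomorphisms, so their ``difference'' would require careful interpretation that you do not provide. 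Also, your construction $f^{(t)}:\Gd\to\Po{n+1}\Kd^{(t)}$ has the wrong target for pulling back a $k$-invariant that lives on $\Wd=\Po{n}\qVd{t}$; the $k$-invariant is not defined on $\Po{n+1}\Kd^{(t)}$.

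Second, and more concretely, the proposed subtraction $f^{(a)}(\bar\iota_{n+2})-f^{(b)}(\bar\iota_{n+2})$ is taken across two different targets $\Po{n+1}\Kd^{(a)}$ and $\Po{n+1}\Kd^{(b)}$, so it is not obviously well defined. This is exactly the point addressed by Remark~\ref{rdiffobst}, which exhibits the canonical identification $\pinat{n+1}\qVd{a}\cong\pinat{n+1}\qVd{b}$ coming from the identity on $\mZ{n+1}\Wd$. You implicitly rely on such an identification when you assert that ``their difference is thus the class of $\odel$,'' but you never supply it. Relatedly, your statement that $f^{(t)}(\bar\iota_{n+2})$ is ``the matching collection of $(n+3)$ faces in $\csk{n+2}\Kd^{(t)}$'' does not parse: at simplicial level $n+2$ the $(n+2)$-coskeleton agrees with the original object, so there is no matching-collection reinterpretation there; the matching-collection picture works in the paper because $p\q{t}_{n+1}$ maps into $\Wd=\csk{n+1}\qVd{t}$, which is coskeletal one level lower. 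Once you replace your $f^{(t)}$ by $(p\q{t}_{n+1})_{\#}$, fix the target to the common $\piA\Wd$, and use $\iota_{n+2}\in G_{n+2}\lin{\bV{n+2}}$ rather than $\bar\iota_{n+2}$, the computation goes through essentially as in Proposition~\ref{pexistobst}. The concern you raise about ``reconciling abelian group structures'' is legitimate but is handled by simply observing, as the paper does, that the difference of the two \PAa\ maps takes values in abelian group objects.
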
	

\begin{remark}\label{rdiffobst}
Note that even though \w{\Wd=\Po{n}\qVd{t}} is only
\wwb{n+1}coskeletal, it is an $n$-\qps\ for $\Lambda$, and in either
extension we have \w[.]{\mZ{n+1}\qVd{t}=\mZ{n+1}\Wd} Thus we have a
canonical identification \w[,]{\pinat{n+1}\qVd{a}\cong\pinat{n+1}\qVd{b}}
induced by the identity on \w{\mZ{n+1}} (cf.\ \wref[).]{eqhurewicz}
Thus the difference \w{\bdz{a}\cdot(\bdz{b})^{-1}} makes sense; 
it is defined using the the homotopy cogroup structure on \w[.]{\bV{n+2}}
\end{remark}	
	
\begin{proof}
Note that \w{\Gd} is a cofibrant model for \w[,]{\tBL} while the
sections \w{s\q{t}} evidently factor though
\w{\tPo{n+1}\piA\qVd{t}\simeq\tPo{n+1}\Gd} \wb[,]{t=a,b} and are thus
induced in \w{\piA} by the Postnikov structure maps
\w{p\q{t}_{n+1}:\qVd{t}\to\Wd} in \w[.]{s\M} The cohomology class
\w{\delta_{n}} is thus represented by the difference 
\w{\left(p\q{a}_{n+1}\right)_{\#}-\left(p\q{b}_{n+1}\right)_{\#}} (now taking values in
abelian groups) mapping \w{\tPo{n+1}\Gd} to \w[.]{\piA\Wd} 
Since by assumption the maps \w{p\q{t}_{n+1}} are
the identity through simplicial dimension \w[,]{n+1} and their source
is \wwb{n+2}coskeletal, the map $\odel$ is determined by what each
map \w{p\q{t}_{n+1}} does to \wwb{n+2}simplices. As in the proof
of Proposition \ref{pexistobst}, \w{\left(p\q{t}_{n+1}\right)_{\#}} is determined
by its value on the tautological class
\w[,]{\iota_{n+2}\in G_{n+2}\lin{\bV{n+2}}} which maps to the 
class represented by the matching collection 
\w[,]{\left(\bdz{\bV{n+2}},0,\dotsc,0\right)\in\Hom_{\M}(\bV{n+2},M_{n+2}\qVd{t})} 
corresponding to \w[.]{\bdz{\bV{n+2}}:\bV{n+2}\to\mZ{n+1}\qVd{t}=\mZ{n+1}\Wd} 
\end{proof}

\begin{mysubsection}{The representing cocycles}\label{srcc}
The cohomology class \w{\delta_{n}} evidently vanishes if the cocycle 
representing it does \wh that is, if the attaching maps 
\w{\bdz{t}:\bV{n+2}\to\mZ{n+1}\Wd} for \w{\qVd{t}} \wb{t=a,b} are
homotopic \wh though the contrary need not hold.
Note that if we post-compose the maps \w{\bdz{t}} with the inclusion 
\w[,]{j_{n+1}:\mZ{n+1}\Wd\hra\mC{n+1}\Wd} the resulting maps
\w{j_{n+1}\circ\bdz{t}:\bV{n+2}\to\mC{n+1}\Wd} both represent 
\w[,]{\bdz{G_{n+2}}:\bG{n+2}\to C_{n+1}\Gd} by \wref[,]{eqcommmoor} so
that we again face a situation similar to that of \S \ref{sladders},
where we want to lift a nullhomotopy for
\w{j_{n+1}\circ\odel:\bV{n+2}\to\mC{n+1}\Wd} to a nullhomotopy for $\odel$:
$$
\xymatrix@R=15pt{
\bV{n+2} \ar[d]_{\odel} \ar[rr]^{i} && 
\cone{\bV{n+2}} \ar@{.>}[lld]^{H} \ar[d]^{H'} \\
\mZ{n+1}\Wd \ar[rr]_{j_{n+1}} && \mC{n+1}\Wd ~.
}
$$
\noindent If we can do so, \w{\qVd{a}} and \w{\qVd{b}} are weakly equivalent
(relative to \w[).]{\Wd}

We can therefore compare the Andr\'{e}-Quillen difference obstruction
of \cite{BDGoeR} and \cite{BJTurR} (described above) with the
construction of \cite{BlaAI}, as follows:

In the notation of \cite[\S 4]{BlaAI}, for each \w{t=a,b} the attaching map 
\w{\bdz{t}:\bV{n+2}\to\mZ{n+1}\qVd{t}=\mZ{n+1}\Wd} is determined by a map
\w[,]{\lambda:\bG{n+2}\cong\piA\bV{n+2}\to\piA\mZ{n+1}\qVd{t}} fitting into the 
commuting diagram:
\mydiagram[\label{eqliftdo}]{
\piA\bV{n+2} \ar@{-->}[rrr]^{\lambda} \ar[d]_<<<<{\cong}  & & & 
\piA\mZ{n+1}\Vd \ar@{->>}[d]^{(j_{n+1})_{\#}} \\
\bG{n+2} \ar[rr]_{\bdz{\bG{n+2}}} & & \mZ{n+1}\Gd \ar[r]_<<<<{\cong} &
\mZ{n+1}(\piA \qVd{t})
}
\noindent The map \w{(j_{n+1})_{\#}} is surjective by the commutative
diagram before \cite[\S 4.12]{BlaCH}, which also shows that there is a
short exact sequence:
$$
\xymatrix@R=15pt{
0~\to~\Omega\pinat{n}\qVd{t}~\hra~\piA\mZ{n+1}\qVd{t}~\xra{(j_{n+1})_{\#}}~
Z_{n+1}(\piA\qVd{t})~\to~0~.
}
$$
\noindent Thus the choices for the lift $\lambda$, and thus the
difference $\odel$ between \w{\bdz{a}} and \w[,]{\bdz{b}} are in fact
parametrized by \w[.]{\Omega^{n+1}\Lambda}  

We have thus shown that the cohomology obstructions of
\cite[Theorem~4.18]{BlaAI}, described there in terms of the lift
$\lambda$, may be identified with \w{\delta_{n}} of \wref[.]{eqaqcdo}
\end{mysubsection}

\begin{mysubsection}{The difference higher homotopy operation}\label{sdhho}
The problem of lifting the nullhomotopy in \wref{eqliftdo} can be
stated in terms of a ladder diagram as in \S \ref{sladders}, namely:
$$
\xymatrix@R=15pt{
\bV{n+2} \ar[d]^{\odel} \ar[dr]^{g_{m}} \ar[r] & 
\cone{\bV{n+2}} \ar[d]^{H_{m}} \ar[r] &
\Sigma\bV{n+2} \ar[d]^{\gamma_{m-1}} \ar[r] & \dotsb\\
\mZ{n+1}\Wd \ar[r]_{j_{m}} & \mC{n+1}\Wd \ar[r]_{d_{0}} & 
\mZ{n}\Wd \ar[r] & \dotsb~.
}
$$
\noindent This in turn can be recast as a special instance of 
rectifying a suitable homotopy commutative diagram 
(compare Proposition \ref{pladmin}).  

However, to avoid describing this diagram explicitly, we instead
define a certain \wwb{n+2}truncated simplicial object \w{\Yd} and an
attaching map \w{\bdz{\bY{n+3}}:\bY{n+3} \to \mZ{n+2}\Yd} which allows
us to use the definitions of Section \ref{chhoeo} verbatim, in the
setting of \S \ref{scdo}: 

We begin with the \wwb{n+1}truncation 
\w[,]{\tau_{n+1}\Yd:=\tau_{n+1}\Wd} and set 
\w{Y_{n+2}:=\mZ{n+1}\Wd} with \w{d_{0}^{n+2}} the inclusion
\w{i_{n+1}:\mZ{n+1}\Yd\hra Y_{n+1}} and \w{d_{i}^{n+2}=0} for \w[.]{i>0}  
This indeed constitutes an \wwb{n+2}truncated simplicial object, since
\w{d_{i}^{n+1}d_{j}^{n+2}=0} for any \w[.]{0\leq i,j\leq n+2} 
Now set \w{\bY{n+3}:=\bV{n+2}} and let \w[,]{\bdz{\bY{n+3}}=\odel} 
which lands in \w{\mZ{n+1}\qVd{n+1}=Y_{n+2}=\mC{n+2}\Yd} by
construction.  However, since \w{\mZ{n+2}\Yd=0} (since \w{d_{0}^{n+2}}
is monic) it follows that \w{\odel=0} if and only if it factors
through \w[.]{\mZ{n+2}\Yd}  Note that
\w{d^{n+2}_{0}\circ\bdz{\bY{n+3}}=i_{n+1}\circ\odel} is null-homotopic,
since \w{i_{n+1}\circ\bdz{a}} and \w{i_{n+1}\circ\bdz{b}} both realize 
\w[,]{\bdz{G_{n+2}}:\bG{n+2}\to C_{n+1}\Gd} by \wref[.]{eqcommmoor}

Thus the \wwb{n+2}truncated simplicial object \w[,]{\tau_{n+2}\Yd}
together with \w{\bY{n+3}} and \w[,]{\bdz{\bY{n+3}}} satisfies the
assumptions of \S \ref{assumptions}, and we may therefore apply the
constructions of Section \ref{chhoeo} to make the following:
\end{mysubsection}

\begin{defn}\label{shiftdiff}
In the setting of \S \ref{scdo}, the \emph{higher homotopy operation
difference obstruction} associated to \w{\bdz{a}} and \w{\bdz{b}}
is defined to be the subset of 
\w{[\Sigma^{n+1}\bY{n+3},Y_{0}]=[\Sigma^{n+1}\bV{n+2},V_{0}]}
associated as in \S \ref{dhhoa} to \w[.]{\Yd} We denote this subset
by \w[.]{\llrr{\bdz{a},\bdz{b}}}
\end{defn}

Note that the source is suspended once further than in the existence case.
Thus, the correspondence homomorphism yields a map 
\w[,]{\bG{n+2}\to \Omega^{n+1}\Lambda} which represents a cohomology class in
\w[,]{\HAQ{n+2}(\Lambda,\Omega^{n+1}\Lambda)} with coefficient
module determined by the target in Proposition \ref{pmoore}.

\begin{prop}\label{diffsuff}
In the setting of \S \ref{scdo}, the difference obstruction
\w{\llrr{\bdz{a},\bdz{b}}} always has a minimal value, so it is well
defined (and non-empty). It vanishes if and only if
\w[.]{\qVd{a}\simeq \qVd{b}}  
\end{prop}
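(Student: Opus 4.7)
The first step is to verify that the data $(\tau_{n+2}\Yd,\bY{n+3},\bdz{\bY{n+3}}=\odel)$ of \S \ref{sdhho} fits exactly into the framework of \S \ref{assumptions}, so that the constructions and results of Sections \ref{chhoeo}--\ref{cmhho} apply verbatim. In particular, $\bdz{\bY{n+3}}$ lands in $\mC{n+2}\Yd=\mZ{n+1}\Wd$ (because $d_{i}^{n+2}=0$ for $i\geq 1$ by construction), and the composite $d_{0}^{n+2}\circ\odel=i_{n+1}\circ\odel$ is null-homotopic since $j_{n+1}\circ\bdz{a}$ and $j_{n+1}\circ\bdz{b}$ both realize $\bdz{\bG{n+2}}:\bG{n+2}\to \mC{n+1}\Gd$ at the $\piA$-level, and $\bV{n+2}\in\MA$. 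Hence $\llrr{\bdz{a},\bdz{b}}$ really is an instance of Definition \ref{dhhoa} for $\Yd$.

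For the existence of a minimal value, the plan is to mimic the ladder construction of Proposition \ref{pkinv}, starting from $\gamma_{n+1}:=\odel:\bV{n+2}\to\mZ{n+1}\Wd=\mZ{n+1}\Yd$ and producing $\gamma_{0}:\Sigma^{n+1}\bV{n+2}\to V_{0}$. The initial null-homotopy of $g_{n+1}=j_{n+1}\circ\odel$ exists by the computation above. At each inductive step, the required vanishing hypothesis for Lemma \ref{furtherladder} involves only $\tau_{n+1}\Yd=\tau_{n+1}\Wd$, which is an $n$-\qps\ for $\Lambda$; the spiral exact sequence isomorphisms $s_{i}$ in the range $0\leq i\leq n$ therefore hold, enabling the lifts. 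Applying Proposition \ref{pladmin} to this ladder converts it into a minimal value of $\llrr{\bdz{a},\bdz{b}}$, establishing non-emptiness.

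For the equivalence with $\qVd{a}\simeq\qVd{b}$, I apply Corollary \ref{chhoa} to $\Yd$: the operation $\llrr{\bdz{a},\bdz{b}}$ vanishes if and only if $\tau_{n+2}\Yd$ rectifies to a full $(n+3)$-truncated CW object. By \S \ref{stepred}, such a rectification amounts to producing an attaching map $\bY{n+3}\to\mZ{n+2}\Yd$ in the homotopy class of $\odel$. Since the only nonzero face $d_{0}^{n+2}=i_{n+1}$ is monic, $\mZ{n+2}\Yd=0$, so such a lift exists iff $\odel\sim 0$, i.e.\ iff $[\bdz{a}]=[\bdz{b}]$ in $[\bV{n+2},\mZ{n+1}\Wd]$. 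Since $(n+2)$-coskeletal extensions of $\Wd$ are classified up to weak equivalence by the homotopy class of their attaching map (again \S \ref{stepred}), this gives $\qVd{a}\simeq\qVd{b}$ relative to $\Wd$, and conversely.

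The main obstacle is the verification carried out in the second paragraph: $\Yd$ is not itself a truncated resolution of a realizable $\Lambda$, so the spiral-sequence calculations underlying Proposition \ref{pkinv} do not apply off the shelf. The resolution of this difficulty exploits the deliberate engineering of $Y_{n+2}=\mZ{n+1}\Wd$: the needed vanishing inputs to Lemma \ref{furtherladder} only concern $\tau_{n+1}\Yd=\tau_{n+1}\Wd$, which does satisfy the \qps\ hypothesis, while $\mZ{n+2}\Yd=0$ provides the cap at the top of the ladder analogous to the role of $\bdz{\bV{n+2}}$ in the existence case.
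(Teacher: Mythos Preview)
Your approach is essentially the same as the paper's: invoke the ladder construction (Propositions~\ref{pkinv} and~\ref{pladmin}, packaged as Corollary~\ref{calwaysmin}) to produce a minimal value, and then use the fact that $\mZ{n+2}\Yd=0$ to conclude that vanishing of the operation forces $\odel\sim 0$, hence $\bdz{a}\sim\bdz{b}$, hence $\qVd{a}\simeq\qVd{b}$. Your explicit check that the spiral-sequence inputs to Lemma~\ref{furtherladder} only involve $\tau_{n+1}\Yd=\tau_{n+1}\Wd$ is a useful clarification the paper leaves implicit in the phrase ``applies here, too.''

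One small caveat: your appeal to \S\ref{stepred} for the converse direction (that $\qVd{a}\simeq\qVd{b}$ implies $[\bdz{a}]=[\bdz{b}]$) is not quite right as stated. Remark~\ref{stepred} describes an equivalence of categories between truncated CW objects and pairs (truncation, attaching map), which gives a bijection on \emph{isomorphism} classes; it does not by itself say that weak equivalence relative to $\Wd$ forces the attaching maps to be homotopic. The paper's own proof in fact only spells out the forward implication (vanishing $\Rightarrow$ equivalence) and leaves the converse tacit, so you are not missing anything the paper supplies; but if you want to nail down the ``only if,'' you should argue directly that a weak equivalence $\qVd{a}\to\qVd{b}$ which restricts to the identity on $\tau_{n+1}$ induces a homotopy $\bdz{a}\sim\bdz{b}$ of maps into $\mZ{n+1}\Wd$, rather than citing \S\ref{stepred}.
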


\begin{proof}
Corollary \ref{calwaysmin} applies here, too, so
\w{\llrr{\bdz{a},\bdz{b}}} has a minimal value. Combining Proposition
\ref{pladmin} with Proposition \ref{pkinv} for \w[,]{\Yd} we see that
vanishing of some value implies that \w{\overline\delta} can be
chosen to factor through \w[.]{\mZ{n+2}\Yd=0}  Consequently
\w[,]{[\overline \delta]=0} so \w[.]{\bdz{a} \sim \bdz{b}} Hence
\w[.]{\qVd{a} \sim \qVd{b}} 
\end{proof}

The proof of Theorem \ref{thhaq} transfers word for word to our
setting to show:
%
%
\begin{thm}\label{tdiffobst}
The correspondence homomorphism \w{\wPh{n}(\Yd)}
maps a minimal value of \w{\llrr{\bdz{a},\bdz{b}}} to the
Andr\'{e}-Quillen difference obstruction between the two
realizations of $\Lambda$.
\end{thm}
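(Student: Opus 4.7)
The plan is to reduce Theorem \ref{tdiffobst} to Theorem \ref{thhaq} by applying the latter to the auxiliary \wwb{n+2}truncated simplicial object \w{\Yd} of \S \ref{sdhho}, whose construction is engineered precisely so that its existence obstruction encodes the difference obstruction of the original problem. Recall that \w[,]{\tau_{n+1}\Yd = \tau_{n+1}\Wd} that \w{Y_{n+2} = \mZ{n+1}\Wd} with face maps \w{d_0^{n+2} = i_{n+1}} and \w{d_i^{n+2} = 0} for \w[,]{i>0} and that \w{\bY{n+3} = \bV{n+2}} with \w[.]{\bdz{\bY{n+3}} = \odel} By Definition \ref{shiftdiff}, \w{\llrr{\bdz{a},\bdz{b}}} is \emph{by construction} the higher operation \w{\llrr{\Psi^{n+3}_0}} of Corollary \ref{chhoa} applied to \w[,]{\Yd} valued in \w[.]{[\Sigma^{n+1}\bY{n+3}, Y_0] = [\Sigma^{n+1}\bV{n+2}, V_0]}

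First I would apply Proposition \ref{pexistobst} to \w{\Yd} to identify its existence $k$-invariant with \w[.]{\delta_{n}} The proposition provides a representative, in the sense of Corollary \ref{cco}, given by the map \w{\bV{n+2} \to \pinat{n+1}\Yd} induced by \w[.]{d_0^{Y_{n+2}} \circ \bdz{\bY{n+3}} = i_{n+1} \circ \odel} Because \w{\Yd} and \w{\Wd} coincide through simplicial dimension \w[,]{n+1} the canonical identification \w{\pinat{n+1}\Yd \cong \Omega^{n+1}\Lambda} carries this cocycle to the map \w{\bG{n+2} \to \Omega^{n+1}\Lambda} induced by $\odel$ \wwh which is exactly the cocycle that Proposition \ref{pdiffobst} exhibits as a representative of \w[.]{\delta_{n}} Thus the existence $k$-invariant of \w{\Yd} equals \w{\delta_{n}} in \w[;]{\HAQ{n+2}(\Lambda, \Omega^{n+1}\Lambda)} the shift in cohomological degree relative to a generic application of Theorem \ref{thhaq} is absorbed by the reindexing \w[.]{\bY{n+3} = \bV{n+2}}

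Next I would run the ladder-diagram argument used in the proof of Theorem \ref{thhaq}, now applied to \w{\Yd} with \w[.]{j = 0} By Proposition \ref{pladmin}, any minimal value of \w{\llrr{\bdz{a},\bdz{b}}} corresponds to a ladder diagram descending from \w{\gamma_{n+1} = i_{n+1} \circ \odel:\bV{n+2} \to \mZ{n+1}\Yd} to a map \w[,]{\gamma_0:\Sigma^{n+1}\bV{n+2} \to V_0} whose adjoint realizes the minimal value in \w[.]{[\Sigma^{n+1}\bV{n+2}, V_0]} Proposition \ref{pkinv} then guarantees that the class of this adjoint, viewed in \w{\Omega^{n+1}\Lambda\lin{\bV{n+2}}} via the spiral isomorphisms, represents the $k$-invariant of \w[.]{\Yd} Unwinding Definition \ref{chomom}, this identifies the image under \w{\wPh{n}(\Yd)} of the minimal value with the $k$-invariant of \w[,]{\Yd} hence with \w{\delta_{n}} by the first step.

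The main technical obstacle \wh the one place the transfer from Theorem \ref{thhaq} is not purely formal \wh is verifying the vanishing hypothesis of Lemma \ref{furtherladder} at each rung of the ladder for \w[,]{\Yd} so that Proposition \ref{pkinv} actually applies. This reduces to an analogue of \wref{eqnqpia} for \w[,]{\Yd} which needs to be checked directly from the construction of \w{\Yd} (using that \w{Y_{n+2} = \mZ{n+1}\Wd} is built expressly to make \w{\Yd} a semi-Postnikov section with the appropriately shifted homotopy profile). Once this vanishing is established, every other ingredient \wh the bijection between minimal values and ladder diagrams, the spiral descent, and the Dold--Kan identification behind Corollary \ref{cco} \wh transfers verbatim from Section \ref{cmhho}, completing the proof.
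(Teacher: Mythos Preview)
Your proposal is correct and follows essentially the same route as the paper: the paper's entire proof is the one-line remark that ``the proof of Theorem \ref{thhaq} transfers word for word to our setting,'' i.e., one applies Propositions \ref{pkinv} and \ref{pladmin} with \w{j=0} to the auxiliary object \w[,]{\Yd} exactly as you outline. Your additional observation that the vanishing input to Lemma \ref{furtherladder} (an analogue of \wref{eqnqpia} for \w[)]{\Yd} must be checked is a point the paper leaves implicit when asserting in \S \ref{sdhho} that \w{\Yd} ``satisfies the assumptions of \S \ref{assumptions}.''
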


\begin{remark}\label{rdiagsphere}
The rectification problem of \S \ref{sdhho} is somewhat
unsatisfactory, in that the truncated simplicial object \w{\Yd} which
we are trying to rectify does not consist solely of ``wedges of spheres''
(objects in \w[)]{\MA} in each simplicial dimension, so that 
(unlike the existence obstructions of \S \ref{dhhoa}) the higher
homotopy operation of \S \ref{shiftdiff} cannot be described in terms of 
\w[.]{\PAAlg} 

However, \w{s\M} is a pointed simplicial category in the sense of
\cite[II, \S 1]{QuiH}, and for any \w{A\in\M} we have an object
\w{S^{n}\wedge A:=(\bS{n}\otimes A)/(\{\pt\}\otimes A)} in \w{s\M}
(having $A$ in dimension $n$ and $\ast$ below), with a natural bijection between
\w{\map_{s\M}(S^{n+1}\wedge A,\,\Wd)} and \w[.]{\map_{\M}(A,\,\mZ{n+1}\Wd)}  
Therefore, \w{\odel\sim\ast} in Proposition \ref{pdiffobst} \wh or equivalently, 
\w{\bdz{a}\sim\bdz{b}:\bV{n+2}\to\mZ{n+1}\Wd} \wwh if and only if the diagram:
\mydiagram[\label{eqtwomaps}]{
S^{n+1}\wedge\bV{n+2} \ar@/^4mm/[rrr]^{\tbd{a}} \ar@/_4mm/[rrr]_{\tbd{b}} &&& \Wd
}
\noindent homotopy commutes (i.e., the corresponding maps \w{\tbd{a}} and
\w{\tbd{b}} are homotopic in \w[).]{s\M}  Thus the vanishing of the higher homotopy
operation \w{\llrr{\bdz{a},\bdz{b}}} is equivalent to rectifying the
diagram \wref[,]{eqtwomaps} whose entries are all in \w[.]{\MA} 
\end{remark}	

\begin{mysubsect}[\label{sgch}]{The geometric correspondence homomorphism}

In the setting of \S \ref{scdo}, assume that we have changed the two
maps \w{\tbd{a}} and \w{\tbd{b}} in \wref{eqtwomaps} into 
cofibrations, so that we have cofibration sequences:
$$
S^{n+1}\wedge\bV{n+2}~\xra{\tbd{t}}~\Wd~\to\sk{n+2}\qVd{t}
\hspace*{18mm} (t=a,b)
$$
\noindent in \w{s\M} (cf.\ \cite[I, \S 2]{QuiH}). 

If \w[,]{\tbd{a}\sim\tbd{b}} we have the solid
homotopy-commutative diagram with vertical weak equivalences:
\mydiagram[\label{eqcofiber}]{
S^{n+1}\wedge\bV{n+2}~\ar@{>->}[rr]^-{\tbd{a}} \ar[d]^{=} &&\Wd \ar[rr]^{i^{a}}
\ar[d]_{=} && \sk{n+2}\qVd{a} \ar@{.>}[d]_{\simeq}^{\phi} \\
S^{n+1}\wedge\bV{n+2}~\ar@{>->}[rr]^-{\tbd{b}} &&\Wd \ar[rr]^{i^{b}}
&& \sk{n+2}\qVd{b} ~,
}
\noindent inducing the dotted weak equivalence $\phi$.

In general, since homotopy colimits preserve weak equivalences and
commute with each other, applying \w{\hocolim} to the top row of 
\wref{eqcofiber} yields a horizontal homotopy cofibration sequence:
$$
\xymatrix@R=15pt{
\Sigma^{n+1}\bV{n+2}~\ar[rr]^-{\hocolim \tbd{a}} 
\ar[rrd]_{\hocolim(i^{b}\tbd{a})}  && 
\hocolim \Wd \ar[rr]^{\hocolim i^{a}}
\ar[d]^{\hocolim i^{b}} && \hocolim\sk{n+2}\qVd{a} \ar@{.>}[lld]^{\exists?g} \\
&&\hocolim \sk{n+2}\qVd{b}~, &&
}
$$
\noindent because \w{\hocolim(S^{n+1}\wedge\bV{n+2})\simeq\Sigma^{n+1}\bV{n+2}} 
by construction.

Note that  $g$ exists (making the diagram commute up to homotopy) 
if and only if the class 
\w{\Theta_{n+2}\q{b}:=\left[\hocolim(i^{b}\tbd{a})\right]} is zero. A
sufficient (but not necessary) condition for this to 
happen is that the left square in \wref{eqcofiber} commutes up to
homotopy (in which case \w{g:=\hocolim \phi} is a weak equivalence). 

Therefore, we can think of \w{\Theta_{n+2}\q{b}} in 
\w{\left[\Sigma^{n+1}\bV{n+2},\,\hocolim\sk{n+2}\qVd{b}\right]} 
as the obstruction to extending the weak equivalence 
\w{\sk{n+1}\qVd{a}\xra{=}\sk{n+1}\Wd\xra{=}\sk{n+1}\qVd{b}} to a weak
equivalence \w[.]{\sk{n+2}\qVd{a}\to\sk{n+2}\qVd{b}} 

Now assume that we have two full simplicial realizations \w{\qVd{a}} and
\w{\qVd{b}} of \w{\Gd\to\Lambda} in \w[,]{s\M} and that 
the two associated realizations \w{X\q{t}:=J\qVd{t}} of $\Lambda$ in
$\M$ (cf.\ \S \ref{setmc}) are given by \w[.]{\hocolim\qVd{t}}
In this case, we can think of the filtrations
\w{\F_{n}X\q{t}:=\hocolim\sk{n}\qVd{t}} \wb{n\geq 0} as successive
approximations to the objects \w[,]{X\q{t}} where each inclusion
\w{\sk{n}\qVd{t}\hra\qVd{t}} induces a map
\w[.]{j\q{t}_{n}:\F_{n}X\q{t}\to X\q{t}} Moreover, the resulting
sequence of obstructions are just traditional ``higher Toda brackets''
(cf.\ \cite{GWalkL}) appearing in trying to extend the inclusion of
\w{\hocolim\sk{0}\qVd{a}=W_{0}} into \w[.]{\hocolim\sk{0}\qVd{b}} 
There is also an ``inverse'' obstruction \w{\Theta_{n+2}\q{a}} in
\w[.]{\left[\Sigma^{n+1}\bV{n+2},\,\hocolim\sk{n+2}\qVd{a}\right]} 
Note that one vanishes if and only if the other does. 

Considering \w{\Theta_{n+2}\q{b}} as an element in 
\w[,]{(\Omega^{n+1}\piA\F_{n+2}X\q{b})\lin{\bV{n+2}}} 
and applying \w{(j\q{t}_{n})_{\#}} to \w{\Theta_{n+2}\q{b}} yields
a class in:
$$
(\Omega^{n+1}\piA X\q{b})\lin{\bV{n+2}}~=\ 
(\Omega^{n+1}\Lambda)\lin{\bV{n+2}}~
\cong~\Hom_{\PAAlg}(\bG{n+2},\,\Omega^{n+1}\Lambda)~.
$$
\noindent The procedure defined above should therefore be thought of
as a ``geometric version'' of the correspondence homomorphism
\w{\wPh{n}(\Yd)} of \S \ref{chomom} applied to a minimal value of
\w[:]{\llrr{\bdz{a},\bdz{b}}}  

To see this, we specialize to the case where \w{\M=\Sa} and 
\w[.]{\A=\{S^{1}\}} In this case the simplicial sets
\w{\F_{n}X\q{t}:=\hocolim\sk{n}\qVd{t}} are indeed successive 
approximations to \w[,]{X\q{t}} since from \wref{eqnqpia} and the
  Bousfield-Friedlander spectral sequence (cf.\ \cite[Theorem B.5]{BFrieH}), 
we see that \w{\pis\F_{n}X\q{t}} agrees with $\Lambda$ through
dimension $n$ (and in fact much more is true: see \cite[\S 10]{BBlaC}).

Moreover, we can use the description of the $E^{n+1}$-term of the
spectral sequence for \w{\sk{n+1}\qVd{b}} in \cite[\S 3]{BBlaS}
(combined with \cite[\S 8.4]{DKStB}) to see that the minimal value for
the higher homotopy operation \w{\llrr{\bdz{a},\bdz{b}}} is
\emph{defined} precisely when the differential \w{d^{n+1}} vanishes on
the element in \w{E^{n+1}_{n+2,\ast}} represented by
\w[.]{\odel\in(\piA\mC{n+1})\lin{\bV{n+2}}=E^{1}_{n+2,\ast}} 
In this case \w{[\odel]} lifts to \w[,]{\mC{0}\Wd=W_{0}} and
post-composing with the structure map
\w{W_{0}\to\hocolim\sk{n+1}\qVd{b}} yields (one value for)  
\w{\Theta_{n+2}\q{b}} as constructed above.
\end{mysubsect}

%
%
\sect{An application to rational homotopy}
\label{crht}
Let \w{\M=\LQ} be the category of reduced differential graded Lie
algebras (DGLs) over $\bQ$ \wh a model for the rational homotopy
theory of simply-connected pointed spaces (cf.\ \cite[\S 4]{QuiR}).
If we let our collection $\A$ of homotopy cogroup objects consist of
the rational DGL $2$-sphere \w{S^{2}_{\bQ}} in \w[,]{\LQ} we see that
a \PAa is just a connected graded Lie algebra over $\bQ$. Note the
shift in dimension, due to the fact that we use \w{\pis(\Omega X;\bQ)}
as the homotopy \PAa of \w[,]{X_{\bQ}} so we have Samelson products,
which respect the grading of \w[,]{\pis(\Omega X;\bQ)} rather than
Whitehead products. 

By Hilton's theorem (cf.\ \cite[Theorem A]{HilH}), if $W$ is a wedge of
rational spheres of dimension \w[,]{\geq 2} 
\w{\Gamma:=\pis(\Omega W;\bQ)} is a free Lie algebra, so $\Gamma$ is
(intrinsically) \emph{coformal} \wh that is, $\Gamma$, equipped with
zero differentials, is itself a minimal DGL model for \w[.]{W_{\bQ}}
Moreover, no higher $\A$-homotopy operations exist in 
\w[,]{\pis(\Omega W;\bQ)} because no non-trivial rational homotopies
exist for maps between (fibrant and cofibrant) DGLs with zero differential.

Thus any free simplicial \PAa resolution \w{\Gd\to\Lambda}
of a rational \Pa $\Lambda$ is canonically realizable by a (strict)
simplicial DGL \w[,]{\Wd\in s\LQ} whose geometric realization
\w{\|\Wd\|} is the coformal realization of $\Lambda$ (unique up to
weak equivalence in $\M$). 

Since this \w{\Wd} is coformal in each simplicial dimension, all
(higher) homotopies used to define \emph{all} values of the
existence obstruction \w{\llrr{\Psi^{n+2}_0}} based on the
\wwb{n+1}truncation of \w[,]{\Wd} for any \w[,]{n\geq 1} necessarily
vanish. In particular, this will hold for any of the minimal values 
(which exist by Corollary \ref{calwaysmin}). In light of
Theorem \ref{thhaq}, this implies: 

%
%
\begin{prop}\label{pratpa}
If \w[,]{\M=\LQ} \w[,]{\A:=\{S^{2}_{\bQ}\}} and
\w[,]{\Lambda\in\PAAlg} all the Andr\'{e}-Quillen obstructions
\w{\beta_{n}} to realizing $\Lambda$ vanish (for one branch of the
inductive procedure in \S \ref{sipr}).
\end{prop}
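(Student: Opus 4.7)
The proof plan follows the outline already sketched in the paragraph preceding the proposition. The strategy is to realize a chosen free simplicial resolution $\Gd \to \Lambda$ by a strict simplicial DGL $\Wd \in s\LQ$ that is coformal in every simplicial dimension, and then to observe that every minimal value of the higher homotopy operation obstruction $\llrr{\Psi^{n+2}_0}$ attached to the $(n+1)$-truncation of $\Wd$ is forced to vanish, since there are no non-trivial rational homotopies between coformal DGLs.

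Concretely, I would first fix a CW basis $(\bG{n})_{n=0}^{\infty}$ for a free simplicial resolution $\Gd\to\Lambda$ in $s\PAAlg$. Each $\bG{n}$ is a connected free graded Lie algebra over $\bQ$, and by Hilton's theorem it is a minimal DGL model (with zero differential) for a wedge $\bW{n}$ of rational spheres of dimension $\geq 2$. Applying the free Lie functor dimensionwise, together with Remark \ref{rcw}, yields a strict CW-resolution $\Wd\in s\LQ$ of a coformal DGL representing $\Lambda$, with $\piA\Wd\cong\Gd$, and with each $W_n$ of zero differential. For the inductive step of \S\ref{sipr} I would set $\qVd{n+1}:=\tau_{n+1}\Wd$ and take $\bdz{\bV{n+2}}$ to be the attaching map inherited from $\Wd$; then $d_0\circ\bdz{\bV{n+2}}$ is literally zero as a map into $\mZ{n+1}\qVd{n+1}$, since $\Wd$ is strictly simplicial.

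In the ladder diagram of Proposition \ref{pkinv}, the null-homotopies $H_m$ of $j_m\circ\gamma_m$ live in mapping spaces between coformal DGLs in $\MA$. Since between (cofibrant-fibrant) DGLs with zero differential the relevant mapping spaces are effectively discrete, the only available $H_m$ is the constant null-homotopy of the zero map, and consequently each induced $\gamma_{m-1}$ is zero, and in particular $\gamma_0:\Sigma^n\bV{n+2}\to V_0$ is the zero map. By Proposition \ref{pladmin}, this is a minimal value of $\llrr{\Psi^{n+2}_0}$, vanishing trivially, and by Theorem \ref{thhaq} it is sent under $\wPh{n}$ to $\beta_n$, giving $\beta_n=0$. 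The main subtlety I foresee is in the first step: justifying that the levelwise coformal realization can be assembled into a genuine strict simplicial DGL CW-resolution $\Wd$ compatible with the face and degeneracy structure of $\Gd$. This should follow from the functoriality of the free Lie algebra functor together with the realization procedure cited in Remark \ref{rcw}, since a DGL map between zero-differential DGLs is just a map of graded Lie algebras, and the attaching maps $\bdz{\bG{n}}$ of $\Gd$ can be lifted tautologically to such maps.
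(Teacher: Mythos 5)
Your proposal is correct and follows essentially the same route as the paper: Hilton's theorem giving coformality of free graded Lie algebras, the tautological lift of $\Gd$ to a strict simplicial DGL $\Wd$ with zero differentials (you rightly note that a graded Lie algebra map between zero-differential DGLs is automatically a DGL map), vanishing of the minimal value of $\llrr{\Psi^{n+2}_0}$, and Theorem \ref{thhaq} to conclude $\beta_n=0$. One minor point: you invoke Remark \ref{rcw} to build $\Wd$, but that remark presupposes realizability of $\Lambda$, which is what we ultimately want to recover; as you yourself observe in the last sentence, no such input is needed here because the realization of $\Gd$ by $\Wd$ is tautological (take each $G_n$ with zero differential), so the citation is a distraction rather than a gap. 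Also, once you observe that the chosen $\bdz{\bV{n+2}}$ gives $\gamma_n=0$ on the nose, the ladder argument needs only that the zero null-homotopies $H_m=0$ are legitimate choices; the stronger "mapping spaces are effectively discrete" claim (which is delicate for maps out of $\cone{\Sigma^{k}\bV{n+2}}$, a non-coformal object) is not actually required to exhibit a vanishing minimal value, and the paper likewise only needs some minimal value to vanish.
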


Of course, by Corollaries \ref{chhoa} and \ref{cvanish}, this implies in turn
Quillen's corollary to \cite[Theorem 1]{QuiR}, stating that any
simply-connected rational \Pa $\Lambda$ is realizable. 

\begin{remark}\label{rdegcof}
Note that if we replace \w{\Wd} by a weakly equivalent Reedy fibrant
simplicial object \w[,]{\Wd'} the differentials in each DGL \w{W_{i}}
need no longer vanish; so we cannot apply the above argument (for the
vanishing of the higher homotopy operations) to calculating
the cocycle representing the cohomology obstruction directly.
\end{remark}

\end{document}